\newtheorem{Theorem}{Theorem}[section]
\newtheorem{Lemma}[Theorem]{Lemma}
\newtheorem{Prop}[Theorem]{Proposition}
\newtheorem{Rem}[Theorem]{Remark}
\newtheorem{Exa}[Theorem]{Example}
\def\cS{\mathcal{S}}
\def\fA{\mathfrak{A}}
\def\Erw{\mathbb{E}}
\def\N{\mathbb{N}}
\def\Prob{\mathbb{P}}
\def\R{\mathbb{R}}
\def\Z{\mathbb{Z}}
\def\eps{\varepsilon}
\def\1{\vec{1}}
\def\3{{\ss}}
\def\eqdist{\stackrel{d}{=}}
\def\idist{\stackrel{d}{\to}}
\def\vag{\stackrel{v}{\to}}
\def\weakly{\stackrel{w}{\to}}
\def\todistrfd{\stackrel{f.d.d.}{\longrightarrow}}
\def\todistrD{\stackrel{d}{\Longrightarrow}}
\def\wh{\widehat}
\def\ovl{\overline}
\def\uline{\underline}
\newcommand{\ofdd}{\overset{f.d.d.}{=}}
\def\todistrfd{\stackrel{f.d.d.}{\longrightarrow}}
\begin{document}

\title*{Leader election using random walks}
\titlerunning{Leader election using random walks}
\author{Gerold Alsmeyer$^{1}$, Zakhar Kabluchko$^{1}$ and Alexander Marynych$^{1,2}$}
\institute{$^{1}$ Inst.~Math.~Statistics, Department
of Mathematics and Computer Science, University of M\"unster,
Orl\'eans-Ring 10, D-48149 M\"unster, Germany.\at
$^{2}$ Faculty of Cybernetics, Taras Shevchenko National University of Kyiv, 01601 Kyiv, Ukraine\at
\email{gerolda@math.uni-muenster.de, zakhar.kabluchko@uni-muenster.de,\at marynych@unicyb.kiev.ua}}

\maketitle

\abstract{In the classical leader election procedure all players toss coins independently and those who get tails leave the game, while those who get heads move to the next round where the procedure is repeated. We investigate a generalizion of this procedure in which the labels (positions) of the players who remain in the game are determined using an integer-valued random walk. We study the asymptotics of some relevant quantities for this model such as: the positions of the persons who remained after $n$ rounds; the total number of rounds until all the persons among $1,2,\ldots,M$ leave the game; and the number of players among $1,2,\ldots,M$ who survived the first $n$ rounds. Our results lead to some interesting connection with Galton-Watson branching processes and with the solutions of certain stochastic-fixed point equations arising in the context of the stability of point processes under thinning. We describe the set of solutions to these equations and thus provide a characterization of one-dimensional point processes that are stable with respect to thinning by integer-valued random walks.
}

\bigskip

{\noindent \textbf{AMS 2000 subject classifications:} primary 60F05, 60G55; secondary 60J10
}

{\noindent \textbf{Keywords:} Galton-Watson branching process, leader-election procedure, random sieve, restricted self-similarity, stable point process, stochastic-fixed point equation}

\section{Introduction}\label{sec:intro}

The classical leader-election procedure \cite{BrussGrubel:03,Fill+Mahmoud+Szpankowski:96,GruebelHagemann:16,Janson+Szpankowski:97,Prodinger:93}, when applied to the infinite set of positive integers $\N$, may be viewed as a random sieve which in each round eliminates an integer not yet sieved in accordance with the outcome of a coin tossing event. The integers are typically viewed as players in a game who independently toss a coin so as to determine whether they will stay in the game for the next round or not. An alternative description
is the following: Relabel kept integers (players) at the beginning of each round by $1,2,\ldots$ while keeping the original order. Then let $R=\{R(k):k\ge 1\}$ be the random set of integers kept for the next round, where
$$ R(0):=0,\quad R(k):=\xi_{1}+\ldots+\xi_{k},\ k\ge 1 $$
is a random walk with independent identically distributed (iid) increments $\xi_{1},\xi_{2},\ldots$ having a geometric distribution on $\N$. Adopting this viewpoint, a natural generalization is to replace the geometric distribution by an arbitrary distribution $(p_{n})_{n\ge 1}$ on $\N$ with $p_{1}<1$.

The purpose of this paper is to study the asymptotics of some relevant quantities for this generalization which will lead us to some interesting connection with Galton-Watson branching processes (GWP) and the solutions of certain related stochastic-fixed point equations (SFPE). Such SFPE's in turn arise in connection with the stability of point processes as will be explained in Section \ref{subsec:stability of PP}.

\begin{figure}[t]
\centering
\includegraphics[width=0.99\textwidth]{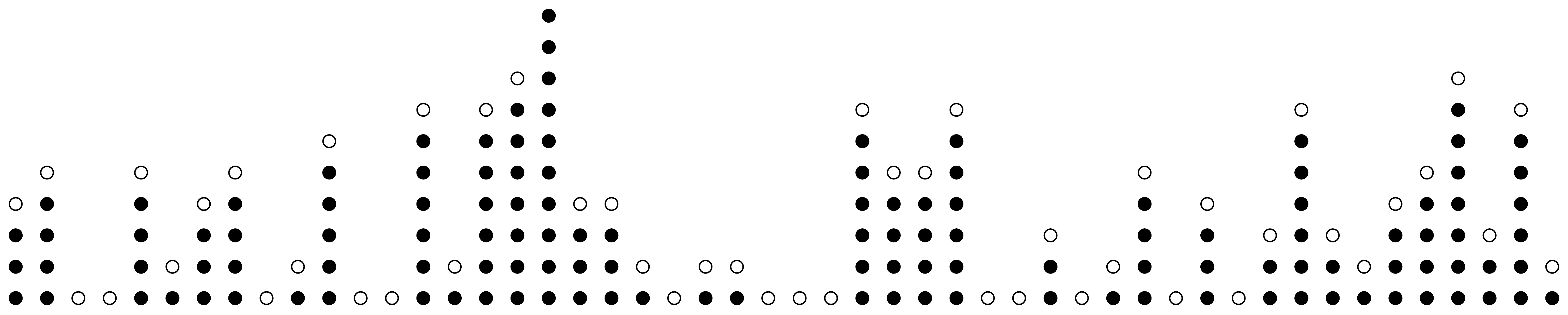}
\caption{A realization of the leader election procedure. The players are arranged horizontally, the vertical axis corresponds to the number of rounds, the bottom line being the first round. Players leaving (staying in) the game are shown as empty (filled) circles.}
	\label{fig1}
\end{figure}

\vspace{.1cm}
A more formal model description is next; see Figure~\ref{fig1} for a sample realization.  Let $R^{(n)}$, $n\ge 1$, be independent copies of a random walk $R$ on $\N$ and denote the increments of $R^{(n)}$ by $\xi_{1}^{(n)}, \xi_{2}^{(n)},\ldots$ That is, the $\xi_{k}^{(n)}$ for $k,n\in\N$ are iid random variables with $\Prob\{\xi_{k}^{(n)}=i\}= p_{i}$, $i\in\N$, and
$$
R^{(n)}(0)=0, \quad R^{(n)}(k)=\xi_1^{(n)} + \ldots + \xi_{k}^{(n)}.
$$
In round $n$, players with current labels $ R^{(n)}(1),R^{(n)}(2),\ldots$ stay for the next round while all other players leave the game. Remaining players are relabeled by $1,2,\ldots$ and the procedure is repeated over and over again. The quantities to be studied hereafter are
\begin{itemize}\itemsep3pt
\item $N^{(n)}_{M}$, the number of players among $1,2,\ldots,M$ who survived the first $n$ rounds, formally
\begin{equation}\label{N_definition}
N_{M}^{(0)}:=M\quad\text{and}\quad N_{M}^{(n)}:=\#\{j\in\N:R^{(n)}(j)\le N_{M}^{(n-1)}\}
\end{equation}
for $M\in\N_{0}:=\{0,1,2,\ldots\}$ and $n\in\N$.
\item $1\le S_{1}^{(n)}<S_{2}^{(n)}<S_{3}^{(n)} <\ldots $, the original numbers of the players who survived the first $n$ rounds, formally
\begin{equation}\label{S_definition}
S_{j}^{(n)}:=\inf\{i\in\N: N_{i}^{(n)}=j\}
\end{equation}
for $j\in\N$ and $n\in\N_0$.
\item $T(M)$, the number of rounds until all players $1,2,\ldots,M$ have been eliminated, thus
\begin{equation}\label{T_definition}
T(M)\ :=\ \inf\{n\in\N:N_{M}^{(n)}=0\}
\end{equation}
for $M\in\N$.
\end{itemize}
The connection with simple GWP's stems from the basic observation that, for each $n\in\N_{0}$,
$$
\Big(S_{1}^{(n)},S_{2}^{(n)},\ldots\Big)\ =\ \Big(S_{R^{(n)}(1)}^{(n-1)},S_{R^{(n)}(2)}^{(n-1)},\ldots\Big),
$$
whence, using the initial conditions $S_{j}^{(0)}=j$ for $j\in\N$,
$$
\Big(S_{1}^{(n)},S_{2}^{(n)},\ldots\Big)\ =\ \Big(R^{(1)}\circ\cdots\circ R^{(n)}(1),R^{(1)}\circ\cdots\circ R^{(n)}(2),\ldots\Big)
$$
for $n\in\N_{0}$, where $\circ$ denotes the usual composition $f\circ g(\cdot)=f(g(\cdot))$. This shows that the random vector $(S_{1}^{(n)},S_{2}^{(n)},\ldots)$ is the $n$-fold forward iteration of the random walk $R$ when viewed as a random mapping from $\N$ to $\N$. Passing to the backward iterations, which does not change the distribution, we obtain the basic relation for our leader-election procedure:
\begin{equation}\label{eq:basic_identity}
\Big(S_{1}^{(n)},S_{2}^{(n)},\ldots\Big)\ \eqdist\ \Big(R^{(n)}\circ\cdots\circ R^{(1)}(1),R^{(n)}\circ\cdots\circ R^{(1)}(2),\ldots\Big)
\end{equation}
for $n\in\N_{0}$. Now, the $j$-th coordinate of the random vector on the right-hand side is nothing but the number of descendants of individuals $1,\ldots,j$ in generation $n$ of a GWP starting from countably many individuals $1,2,\ldots$ and having offspring distribution $(p_{n})_{n\ge 1}$. A sample realization of this GWP is shown in Figure~\ref{fig2}. Since we assume $p_{1}<1$, the GWP is supercritical and survives with probability one.
Two classes of leader-election procedures will be investigated separately hereafter and lead to quite different asymptotics: those generated by a law $(p_{n})_{n\ge 1}$ with finite-mean, in which case the corresponding GWP has also finite mean, and those where $\sum_{n\ge 1}np_{n}=\infty$.

\begin{figure}[t]
\includegraphics[width=0.99\textwidth]{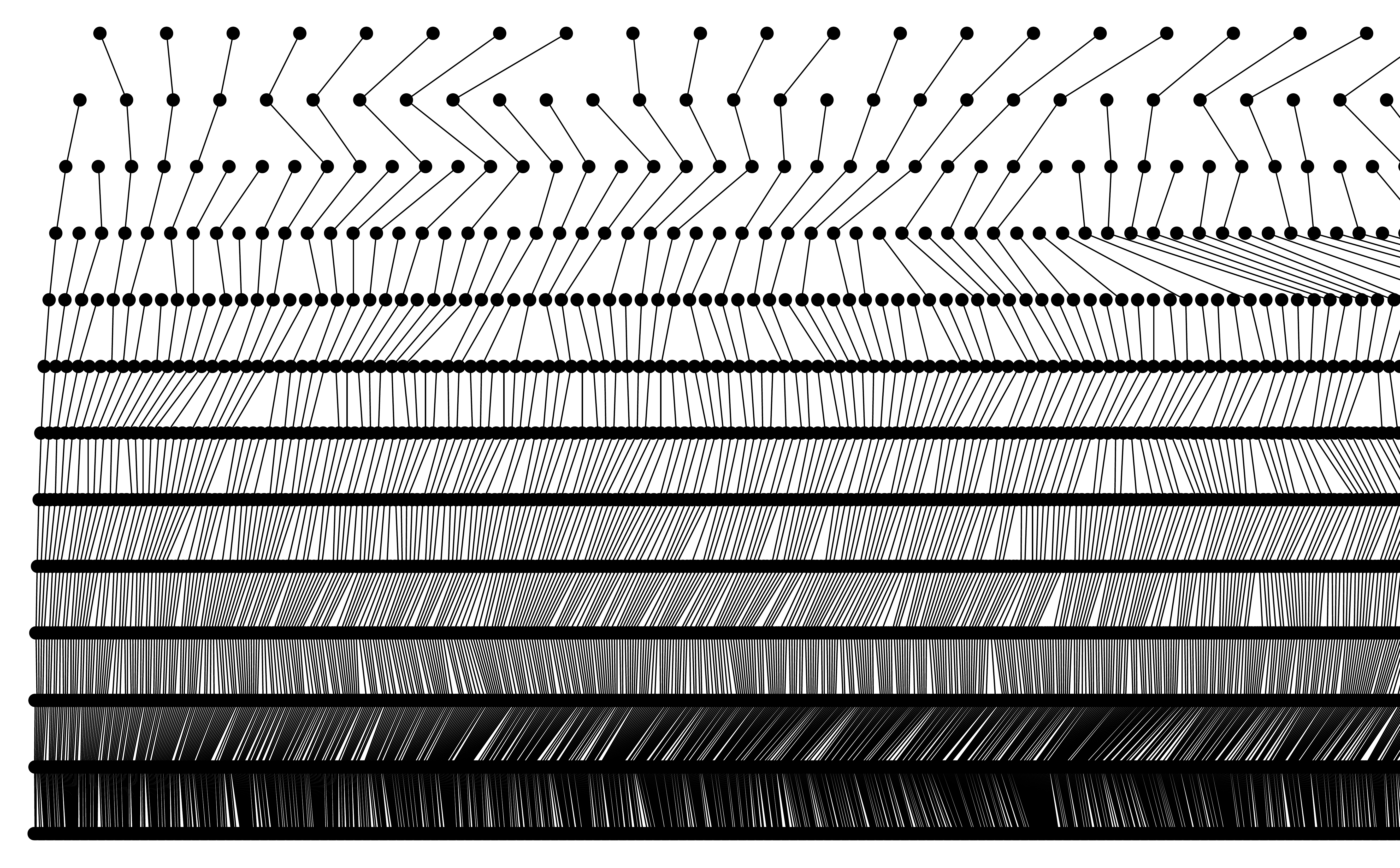}
\caption{A Galton-Watson process starting with countably many individuals $1,2,\ldots$ The $(n+1)$-st row (counting from the top) shows the individuals in the $n$-th generation. The $j$-th individual in generation $n$ is shown at horizontal position $\mu^{-n}j$, where $\mu\in (1,\infty)$ is the expected number of offspring of one individual. Any individual in the $n$-th generation is connected by a line segment to its last descendant in generation $n+1$. For example, individual $1$ in the top row has $2$ direct descendants, whereas individuals $2,\ldots,8$ in the top row have one direct descendant each.}
	\label{fig2}
\end{figure}

\vspace{.1cm}
As indicated by \eqref{eq:basic_identity}, our limit results for $(S_{1}^{(n)},S_{2}^{(n)},\ldots)$, stated as Theorems \ref{main1_for_RW} and \ref{main1_for_RW_heavy}, will be derived from appropriate limit results for GWP's. The asymptotic behavior of $N^{(n)}_{M}$, as $n,M\to\infty$, and $T(M)$, as $M\to\infty$, is then found in a straightforward manner by drawing on the simple duality relations
$$
\{N^{(n)}_M\ge k\}=\{S_{k}^{(n)}\le M\}\quad\text{and}\quad\{T(M)\le k\}=\{S_{1}^{(k)}>M\}
$$
for $k,M\in\N$ and $n\in\N_{0}$, see Theorems \ref{main1_for_RW_N}, \ref{main1_for_RW_T}, \ref{main1_for_RW_heavy_N} and \ref{main1_for_RW_heavy_T}. The limit processes appearing in Theorems \ref{main1_for_RW} and \ref{main1_for_RW_heavy} are solutions to certain stochastic fixed-point equations, see~\eqref{fixed_point_finite_mean} and \eqref{fixed_point_inf_mean}. The description of the set of all solutions to these equations, to be given in Theorems \ref{fixed_point_solutions_fin_mean} and \ref{fixed_point_solutions_inf_mean}, is a much more delicate question, for the necessary analysis heavily relies on deep results about the behavior of GWP.

\vspace{.1cm}
We have organized this work as follows. All results are stated in the next section. Proofs are then provided in Section \ref{sec:finite mean} for the case when $\sum_{n\ge 1}np_{n}<\infty$ and in Section \ref{sec:infinite mean} for the case when $\sum_{n\ge 1}np_{n}=\infty$. Some technical lemmata may be found in a short appendix. Let us finally mention that in \cite{AlsKabMar:15}, we have studied another modification of the classical leader-election procedure which is based on records in an iid sample. This modification is closely related to the Poisson-Dirichlet coalescent and its asymptotic behavior is different from the model studied here.

\section{Results}\label{sec:results}

In the following, $\xi$ always denotes a generic copy of the increments of the random walks $R^{(n)}$ underlying the considered leader-election procedure, thus $\Prob\{\xi=n\}=p_{n}$ for $n\in\N$. Moreover, $\todistrfd$ shall denote convergence of finite dimensional distributions.

\subsection{The case when $\Erw\xi<\infty$}\label{subsec:finite mean}

We start with a convergence result for $(S_{j}^{(n)})_{j\ge 1}$. Put $\mu:=\Erw\xi$ and let $(Z_{n})_{n\ge 0}$ denote a GWP with offspring distribution $(p_{n})_{n\in\N}$. If $\mu$ is finite, thus $\mu\in (1,\infty)$, then the normalization $(\mu^{-n}Z_{n})_{n\ge 0}$ constitutes a positive martingale and converges a.s. to a limit $Z_{\infty}$ which is either a.s. positive, namely if $\Erw\xi\log\xi<\infty$, or zero.

\begin{Theorem}\label{main1_for_RW}
Suppose that $\mu\in (1,\infty)$. Then, as $n\to\infty$,
\begin{equation}\label{eq:RW_S_conv}
\left(\frac{S^{(n)}_{1}}{\mu^{n}},\frac{S^{(n)}_{2}}{\mu^{n}},\frac{S^{(n)}_3}{\mu^{n}}\ldots\right)
\ \todistrfd\
(Z^{(1)}_{\infty}, Z^{(1)}_{\infty}+Z^{(2)}_{\infty}, Z^{(1)}_{\infty}+Z^{(2)}_{\infty}+Z^{(3)}_{\infty}, \ldots),
\end{equation}
where $Z_{\infty}^{(1)},Z_{\infty}^{(2)},\ldots$ are independent copies of $Z_{\infty}$. The distribution of the limit vector in \eqref{eq:RW_S_conv} satisfies the SFPE
\begin{equation}\label{fixed_point_finite_mean}
\Big(\mu X_{1},\mu X_{2},\ldots\Big)\ \eqdist\ \Big(X_{R(1)},X_{R(2)},\ldots\Big),
\end{equation}
where the random walk $(R(j))_{j\ge 0}$ on the right-hand side with generic increment $\xi$ is independent of $(X_{j})_{j\ge 0}$.
\end{Theorem}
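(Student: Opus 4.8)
The plan is to derive everything from the Galton--Watson representation \eqref{eq:basic_identity} together with the martingale convergence recalled just before the theorem. The first step is to turn \eqref{eq:basic_identity} into a statement about partial sums: as noted right after \eqref{eq:basic_identity}, the $j$-th coordinate of the right-hand side is the number of generation-$n$ descendants of the founders $1,\ldots,j$. Writing $Z_n^{(1)},Z_n^{(2)},\ldots$ for independent copies of the generation-$n$ size $Z_n$ of a single GWP (the $i$-th counting the descendants of founder $i$), this reads, as an identity of the whole sequence,
\[
\Big(S_1^{(n)},S_2^{(n)},\ldots\Big)\ \eqdist\ \Big(\textstyle\sum_{i=1}^{j}Z_n^{(i)}\Big)_{j\ge 1},
\]
so that $(S_j^{(n)})_{j\ge1}$ is distributed as the partial-sum sequence of the iid array $(Z_n^{(i)})_{i\ge1}$. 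This identity is the engine of the proof.

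For the convergence \eqref{eq:RW_S_conv} I would fix indices $j_1<\cdots<j_m$ and use the displayed identity to replace $(\mu^{-n}S_{j_1}^{(n)},\ldots,\mu^{-n}S_{j_m}^{(n)})$ by the corresponding vector of normalized partial sums $\big(\sum_{i=1}^{j_\ell}\mu^{-n}Z_n^{(i)}\big)_{\ell=1}^{m}$. Since $(\mu^{-n}Z_n)_{n\ge0}$ is a positive martingale, $\mu^{-n}Z_n^{(i)}\to Z_\infty^{(i)}$ almost surely for each $i$, and these limits are independent; hence the finite family $\{\mu^{-n}Z_n^{(i)}:i\le j_m\}$ converges a.s.\ to $\{Z_\infty^{(i)}:i\le j_m\}$. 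As partial summation is continuous, the normalized partial sums converge a.s., and therefore in distribution, to $\big(\sum_{i=1}^{j_\ell}Z_\infty^{(i)}\big)_{\ell=1}^{m}$; by the identity the same limit holds for the $S$-vector, which is exactly \eqref{eq:RW_S_conv}. (If $\Erw\xi\log\xi=\infty$ then $Z_\infty=0$ a.s.\ and the limit is the zero sequence, which is harmless.)

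Finally, for the SFPE \eqref{fixed_point_finite_mean} I would set $X_j:=Z_\infty^{(1)}+\cdots+Z_\infty^{(j)}$ and invoke the classical branching fixed-point relation for the martingale limit,
\[
\mu Z_\infty\ \eqdist\ \sum_{k=1}^{\xi}\wh{Z}_\infty^{(k)},
\]
where $\wh{Z}_\infty^{(1)},\wh{Z}_\infty^{(2)},\ldots$ are iid copies of $Z_\infty$ independent of $\xi$ (obtained by decomposing the GWP at the root, dividing by $\mu^n$ and letting $n\to\infty$). Applying this simultaneously to the independent copies $Z_\infty^{(i)}$, with an independent iid array $(W_{i,k})$ of copies of $Z_\infty$ and iid increments $N_i\eqdist\xi$, I obtain for every $j$
\[
\mu X_j\ =\ \sum_{i=1}^{j}\mu Z_\infty^{(i)}\ \eqdist\ \sum_{i=1}^{j}\sum_{k=1}^{N_i}W_{i,k}\ =\ \sum_{m=1}^{R(j)}W_m^{*},
\]
where $R(j):=N_1+\cdots+N_j$ is a random walk with generic increment $\xi$ and $(W_m^{*})_{m\ge1}$ is obtained by reading the array $(W_{i,k})$ block by block, block $i$ having length $N_i$. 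Since all $W_{i,k}$ are iid, the flattened sequence $(W_m^{*})$ is iid with the law of $Z_\infty$ and independent of $R$, so its partial-sum process is a copy of $(X_j)$; the right-hand side is therefore $(X_{R(j)})_{j\ge1}$, giving \eqref{fixed_point_finite_mean}. The main obstacle I anticipate is not any single estimate but securing this fixed-point identity at the level of the entire sequence rather than coordinatewise: one must arrange the offspring of the $j$ founders so that, jointly in $j$, they are re-indexed by one and the same random walk $R$. The block-reading/flattening bookkeeping above is exactly what delivers this, while everything else reduces to martingale convergence and continuity of partial sums.
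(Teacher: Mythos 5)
Your proposal is correct, and it splits into a part that matches the paper and a part that takes a genuinely different route. For the convergence \eqref{eq:RW_S_conv} your argument is the paper's own in different packaging: the paper proves part (A1) of Proposition \ref{Prop:GW_summary} by observing that $j\mapsto R^{(n)}\circ\cdots\circ R^{(1)}(j)$ is again a random walk whose increments are the generation-$n$ sizes of independent single-ancestor GWPs, then invokes martingale convergence; your identity $(S_j^{(n)})_{j\ge1}\eqdist\big(\sum_{i=1}^{j}Z_n^{(i)}\big)_{j\ge1}$, with the copies coupled across $n$ as generation sizes of fixed GWPs, is exactly this observation, and the transfer from a.s.\ convergence of the distributional copy to f.d.d.\ convergence of the $S$-vector is the same. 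Where you diverge is the SFPE \eqref{fixed_point_finite_mean}: the paper gets it in one line, by noting that the random map $\psi$ of \eqref{eq:psi_mapping_definition} is a.s.\ continuous for the product topology on $\R^{\N}$ and applying the continuous mapping theorem to the convergent iterations, whereas you verify the fixed point directly on the explicit limit, using the classical root decomposition $\mu Z_\infty\eqdist\sum_{k=1}^{\xi}\wh{Z}_\infty^{(k)}$ and the flattening argument to upgrade the coordinatewise identity to an identity of whole sequences re-indexed by a single random walk $R$. That flattening step is the crux of your version and it is sound: conditionally on $(N_i)_{i\ge1}$ the block-reading order is a deterministic bijection of the iid array, so $(W_m^{*})_{m\ge1}$ is iid with the law of $Z_\infty$ and independent of $R$. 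The trade-off is clear: the paper's route is shorter, uses no structure of the limit whatsoever, and transfers verbatim to the infinite-mean setting of Theorem \ref{main1_for_RW_heavy}; your route is more hands-on but establishes, independently of any convergence statement, that the partial-sum process of iid copies of $Z_\infty$ solves \eqref{fixed_point_finite_mean} --- which is precisely the ``direct part'' of Theorem \ref{fixed_point_solutions_fin_mean} that the paper dismisses as obvious.
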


In what follows we denote by $D:=D[0,\infty)$ the Skorokhod space of real-valued functions that are defined and right-continuous on $[0,\infty)$, and with finite limits from the left on $(0,\infty)$. Weak convergence on the space $D$ (which may be endowed with the $J_1$- or $M_1$-topology depending on the situation) is denoted by $\todistrD$.
%The space $D[0,\infty)$ is endowed with \textcolor{blue}{the $J_{1}$-topology} and weak convergence on this space is denoted by $\todistrD$.

\begin{Theorem}\label{main1_for_RW_N}
Let $\mu\in (1,\infty)$ and $\Erw\xi\log\xi<\infty$. Then, as $n\to\infty$,
\begin{equation*}
N^{(n)}_{\lfloor\mu^{n} \cdot\rfloor}\ \todistrD\ N'(\cdot)
\end{equation*}
weakly in the Skorokhod space $D$ endowed with the $J_1$-topology, where $N'(x):=\#\{k\in\N:$  $Z^{(1)}_{\infty}+\ldots+Z^{(k)}_{\infty}\le x\}$ for $x\ge 0$.
\end{Theorem}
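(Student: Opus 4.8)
The starting point is the duality relation $\{N^{(n)}_M\ge k\}=\{S^{(n)}_k\le M\}$ recorded just before the statement. It says precisely that $N^{(n)}_M$ equals the number of indices $k$ with $S^{(n)}_k\le M$, i.e.\ $N^{(n)}_M=\#\{k\in\N:S^{(n)}_k\le M\}$. Since each $S^{(n)}_k$ is integer-valued, the floor is immaterial: for $M=\lfloor\mu^n x\rfloor$ one has $S^{(n)}_k\le\lfloor\mu^n x\rfloor$ iff $S^{(n)}_k\le\mu^n x$, whence
\begin{equation*}
N^{(n)}_{\lfloor\mu^n x\rfloor}=\#\Big\{k\in\N:\frac{S^{(n)}_k}{\mu^n}\le x\Big\},\qquad x\ge 0.
\end{equation*}
Thus $N^{(n)}_{\lfloor\mu^n\cdot\rfloor}$ is \emph{exactly} the right-continuous, nondecreasing, integer-valued counting function of the normalized survivor positions $(S^{(n)}_k/\mu^n)_{k\ge1}$, and the proof reduces to identifying the $J_1$-limit of this counting function.

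\textbf{Input from Theorem~\ref{main1_for_RW} and structure of the limit.}
By Theorem~\ref{main1_for_RW} the normalized positions converge in finite-dimensional distributions, $(S^{(n)}_k/\mu^n)_{k\ge1}\todistrfd(W_k)_{k\ge1}$ with $W_k:=Z^{(1)}_\infty+\ldots+Z^{(k)}_\infty$, and by definition $N'(x)=\#\{k:W_k\le x\}$ is the counting function of the limit sequence $(W_k)_{k\ge1}$. The hypothesis $\Erw\xi\log\xi<\infty$ enters exactly here: as recalled before Theorem~\ref{main1_for_RW}, it forces $Z_\infty>0$ almost surely (the process never dies out, the offspring law being supported on $\N$), so the iid copies $Z^{(1)}_\infty,Z^{(2)}_\infty,\ldots$ are a.s.\ strictly positive. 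Consequently $(W_k)_{k\ge1}$ is almost surely strictly increasing and tends to $+\infty$, which is what guarantees that its counting function is well defined, has only unit jumps, and is situated at pairwise distinct points.

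\textbf{A continuity lemma and the continuous mapping theorem.}
The core of the argument is a continuity property of the counting map
\begin{equation*}
\Phi:(w_k)_{k\ge1}\longmapsto\Big(x\mapsto\#\{k\in\N:w_k\le x\}\Big),
\end{equation*}
viewed as a map from $\R^\infty$, equipped with the product topology, into $(D,J_1)$. I would prove that $\Phi$ is continuous at every sequence $w=(w_k)_{k\ge1}$ that is strictly increasing with $w_k\to\infty$. Indeed, fixing $T>0$ with $T\neq w_k$ for all $k$, only finitely many $w_k$ lie in $[0,T]$ and they are pairwise distinct; coordinatewise convergence $w^{(n)}\to w$ then permits the construction of time changes $\lambda_n$ of $[0,T]$ with $\|\lambda_n-\mathrm{id}\|\to 0$ carrying the (finitely many) jumps of $\Phi(w^{(n)})$ onto those of $\Phi(w)$, which is exactly $J_1$-convergence on $[0,T]$; letting $T\uparrow\infty$ along continuity points of $\Phi(w)$ gives convergence in $D[0,\infty)$. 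Since finite-dimensional convergence coincides with weak convergence in $\R^\infty$ under the product topology, and since the limit $(W_k)_{k\ge1}$ lies almost surely in the continuity set of $\Phi$ by the previous paragraph, the continuous mapping theorem yields $\Phi\big((S^{(n)}_k/\mu^n)_k\big)\todistrD\Phi\big((W_k)_k\big)$, that is, $N^{(n)}_{\lfloor\mu^n\cdot\rfloor}\todistrD N'$ in $(D,J_1)$.

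\textbf{Main obstacle.}
The genuine difficulty is bridging the gap between the merely finite-dimensional convergence supplied by Theorem~\ref{main1_for_RW} and the functional $J_1$-statement being claimed. This hinges on two features that must be checked carefully: the almost sure strict monotonicity of the limit points $W_k$, without which jumps could collide in the limit and $\Phi$ would fail to be continuous (and $J_1$, as opposed to $M_1$, would be the wrong topology); and the fact that on each compact interval only finitely many points contribute, so that convergence of finitely many coordinates truly controls the limiting step function near each of its jumps. Both are secured here by $Z_\infty>0$ a.s.\ and $W_k\to+\infty$, so that the only real work is the (standard but indispensable) continuity lemma for the counting map $\Phi$.
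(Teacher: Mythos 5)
Your proposal is correct and takes essentially the same route as the paper: the paper likewise identifies $N^{(n)}_{\lfloor\mu^{n}\cdot\rfloor}$ as the counting process of $(\mu^{-n}S_{k}^{(n)})_{k\ge 1}$, feeds in the finite-dimensional convergence of Theorem \ref{main1_for_RW} together with $Z_{\infty}>0$ a.s.\ (strict increase and divergence of the limit sequence), and concludes by a continuous-mapping argument encapsulated in its appendix Lemma \ref{lem:skorokhod_counting}, which is precisely your continuity claim for the counting map. The single nuance is that this map is continuous (at strictly increasing, divergent sequences, for $J_1$) only when its domain is restricted to \emph{nondecreasing} sequences tending to $\infty$ — the paper's space $L_{\le}$ — and not on all of $\R^{\N}$ with the product topology as you state, since non-monotone perturbations can insert spurious points; this is harmless here because the normalized survivor positions a.s.\ lie in that space.
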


Note that $N'(\cdot)$ is the renewal counting process associated with $(\sum_{j=1}^{k}Z_{\infty}^{(j)})_{k\ge 1}$ and therefore a homogeneous Poisson process if the law of $Z_{\infty}$ is exponential, see Example \ref{exa:classical case} below.

\vspace{.1cm}
The next theorem provides a one-dimensional result for the number $T(M)$ of rounds until all players $1,\ldots,M$ have been eliminated. Although not difficult to prove, we have refrained from a statement of a corresponding functional limit theorem like Theorem \ref{main1_for_RW_N} because it would have required the introduction of a lot more additional notation. See \cite{AlsKabMar:15} for results of this type in a similar setup.

\begin{Theorem}\label{main1_for_RW_T}
Let $\mu\in (1,\infty)$ and $\Erw\xi\log\xi<\infty$. For fixed $x>0$, we have
\begin{equation*}
T(\lfloor\mu^{n} x\rfloor) - n\ \idist\ T'(x),
\end{equation*}
where the distribution of $T'(x)$ is given by
\begin{equation}\label{eq:T_prime_distr}
\Prob\{T'(x)\le k\}\ =\ \Prob\{Z_{\infty}>\mu^{-k}x\},\quad k\in\Z.
\end{equation}
\end{Theorem}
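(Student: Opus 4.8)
The plan is to reduce the statement about $T(M)$, the number of rounds until all players $1,\dots,M$ are eliminated, to the already-established convergence of $S_1^{(n)}$ from Theorem~\ref{main1_for_RW}, via the duality relation stated earlier in the excerpt. Recall that $\{T(M)\le k\}=\{S_1^{(k)}>M\}$ for $k,M\in\N$. Fixing $x>0$ and setting $M=\lfloor\mu^n x\rfloor$, the key observation is that the event $\{T(\lfloor\mu^n x\rfloor)-n\le k\}$ equals $\{T(\lfloor\mu^n x\rfloor)\le n+k\}$, which by duality is $\{S_1^{(n+k)}>\lfloor\mu^n x\rfloor\}$. The first step is therefore to rewrite
\begin{equation*}
\Prob\{T(\lfloor\mu^n x\rfloor)-n\le k\}\ =\ \Prob\{S_1^{(n+k)}>\lfloor\mu^n x\rfloor\}
\end{equation*}
and then to recognize the right-hand side as a probability governed by the scaling limit of $S_1^{(\cdot)}$.

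Second, I would massage the right-hand side into the scale at which Theorem~\ref{main1_for_RW} operates. Dividing through by $\mu^{n+k}$ inside the event gives $\{S_1^{(n+k)}/\mu^{n+k}>\lfloor\mu^n x\rfloor/\mu^{n+k}\}$, and since $\lfloor\mu^n x\rfloor/\mu^{n+k}\to\mu^{-k}x$ as $n\to\infty$ (with $k$ fixed), the threshold converges to the deterministic constant $\mu^{-k}x$. By the one-dimensional marginal of Theorem~\ref{main1_for_RW}, $S_1^{(m)}/\mu^m\todistrfd Z_\infty^{(1)}\eqdist Z_\infty$ as $m\to\infty$; here $m=n+k\to\infty$ together with $n$. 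Hence
\begin{equation*}
\Prob\{S_1^{(n+k)}>\lfloor\mu^n x\rfloor\}\ \longrightarrow\ \Prob\{Z_\infty>\mu^{-k}x\},
\end{equation*}
which is precisely the claimed distribution \eqref{eq:T_prime_distr} of $T'(x)$. The hypothesis $\Erw\xi\log\xi<\infty$ enters exactly here: it guarantees that $Z_\infty$ is a.s.\ positive (so the limit is a genuine, non-degenerate random variable) and that the martingale limit is nondegenerate, matching the standing assumptions of Theorem~\ref{main1_for_RW_N}.

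The remaining care is the handling of the boundary point, i.e.\ whether the convergence of probabilities is valid at the specific level $\mu^{-k}x$. The main obstacle, and the one requiring attention, is that convergence in distribution only yields convergence of $\Prob\{S_1^{(n+k)}/\mu^{n+k}>t\}$ at continuity points $t$ of the limiting distribution function $t\mapsto\Prob\{Z_\infty\le t\}$. Since $Z_\infty$ is an absolutely continuous random variable (its law solves the smoothing/fixed-point equation \eqref{fixed_point_finite_mean} and is known to have no atoms on $(0,\infty)$ under the stated moment condition), every $t=\mu^{-k}x>0$ is a continuity point, so the passage to the limit is justified. The final step is to check consistency: the right-hand side $k\mapsto\Prob\{Z_\infty>\mu^{-k}x\}$ is nondecreasing in $k\in\Z$, tends to $1$ as $k\to\infty$ (since $\mu^{-k}x\to 0$ and $Z_\infty>0$ a.s.), and tends to $0$ as $k\to-\infty$, so it indeed defines a proper distribution function on $\Z$, confirming that $T'(x)$ is a well-defined $\Z$-valued random variable. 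Assembling these observations gives the claim.
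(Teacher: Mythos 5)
Your proposal is correct and follows essentially the same route as the paper: the duality $\{T(M)\le k\}=\{S_{1}^{(k)}>M\}$, rescaling by $\mu^{n+k}$, and the one-dimensional marginal of Theorem~\ref{main1_for_RW} together with continuity of the law of $Z_{\infty}$ at $\mu^{-k}x$. The only cosmetic difference is that the paper disposes of the floor by noting that $S_{1}^{(n+k)}$ is integer-valued (so $\{S_{1}^{(n+k)}>\lfloor\mu^{n}x\rfloor\}=\{S_{1}^{(n+k)}>\mu^{n}x\}$ exactly), whereas you absorb it into a converging-threshold argument; both are fine.
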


\begin{Rem}\label{rem:SH_limit}\rm
In Theorems \ref{main1_for_RW_N} and \ref{main1_for_RW_T}, we assume $\Erw\xi\log\xi<\infty$ since otherwise $Z_{\infty}=0$ a.s. which means that $N'(x)$ and $T'(x)$ are undefined. If $\mu\in(1,\infty)$ and $\Erw\xi\log\xi=\infty$	it is possible to obtain counterparts of Theorems \ref{main1_for_RW}, \ref{main1_for_RW_N} and \ref{main1_for_RW_T} using a Seneta-Heyde normalization for the GWP $(Z_{n})_{n\ge 0}$. More precisely, if $(c_{n})_{n\in\N_0}$ is such that, as $n\to\infty$,
$$ \frac{Z_{n}}{c_{n}}\ \to \ Z^{(SH)}_{\infty}\quad\text{a.s.} $$ 
where $Z^{(SH)}_{\infty}$ is a.s. positive, then
all the claims of Theorems \ref{main1_for_RW}, \ref{main1_for_RW_N} and \ref{main1_for_RW_T} remain valid upon dropping the assumption $\Erw\xi\log\xi<\infty$ and after replacing $\mu^n$ by $c_{n}$ and $Z_{\infty}$ by $Z^{(SH)}_{\infty}$ everywhere, except the term $\mu^{-k}$ in formula \eqref{eq:T_prime_distr}. The latter formula in that case takes the form
$$
\Prob\{T'(x)\le k\}\ =\ \Prob\{Z^{(SH)}_{\infty}>\mu^{-k}x\},\quad k\in\Z.
$$
\end{Rem}

\begin{Exa}\label{exa:classical case}\rm
In the classical leader-election procedure, the players who stay in the game for the next round are determined by iid Bernoulli trials, so that $\xi$ has a geometric distribution on 
$\N$ with some parameter $p\in (0,1)$. Looking at the number of players who survive the first $n$ rounds then leads to a Bernoulli process with parameter $p^{n}$. It follows that the process on the right-hand side of \eqref{eq:RW_S_conv} is a random walk with standard exponential increments $Z^{(k)}_{\infty}$, $k\in\N$, and $(N'(x))_{x\ge 0}$ a standard Poisson process.
\end{Exa}

An interesting and intriguing problem arising from Theorem \ref{main1_for_RW} is to describe the set of all positive nondecreasing solutions to the SFPE \eqref{fixed_point_finite_mean}. Indeed, we can view \eqref{fixed_point_finite_mean} as a definition of a certain stability property of point processes; see Section \ref{subsec:stability of PP} for more details. Note that if the distribution of a random sequence $(X_{1},X_{2},\ldots)$ satisfies \eqref{fixed_point_finite_mean} and $G:\R^{+}\to\R^{+}$, where $\R^{+}:=[0,\infty)$, is an arbitrary nondecreasing random process independent of $(X_{1},X_{2},\ldots)$ and with the restricted self-similarity property
\begin{equation}\label{G_selfsimilar}
(G(\mu t))_{t\in\R^{+}}\ \ofdd\ (\mu G(t))_{t\in\R^{+}},
\end{equation}
then $(G(X_{1}),G(X_{2}),\ldots)$ also satisfies \eqref{fixed_point_finite_mean}. Property \eqref{G_selfsimilar} is known in the literature under the name \emph{semi-selfsimilarity}. We refer to \cite{MaejimaSato:99} for a general definition of semi-selfsimilar processes and their basic properties. We also mention in passing that the class of semi-selfsimilar processes forms a semigroup with respect to composition of independent realizations.

\vspace{.1cm}
Our next result shows that all solutions to \eqref{fixed_point_finite_mean} can be constructed in the above way and the limit in \eqref{eq:RW_S_conv} provides a solution which is fundamental in a certain sense.
%the converse also holds and that the limit in \eqref{eq:RW_S_conv} provides a fundamental solution in a certain sense.

\begin{Theorem}\label{fixed_point_solutions_fin_mean}
Let $(X_{1},X_{2},\ldots)$ be a random element of $\R^{\N}$ such that $0\le X_{1}\le X_{2}\le \ldots$ and \eqref{fixed_point_finite_mean} holds with $(R(j))_{j\ge 0}$ independent of $(X_{j})_{j\in\N}$ and $\mu=\Erw R(1)=\Erw\xi$. Assume further that
\begin{equation}\label{eq:x_log_x}
\Erw R(1)\log R(1)\ =\ \Erw\xi\log\xi\ <\ \infty,
\end{equation}
so that $\Prob(Z_{\infty}>0)=1$. Let $(Z_{\infty}^{(j)})_{j\in\N}$ be independent copies of $Z_{\infty}$. Then there exists a nondecreasing random process $(G(t))_{t\in\R^{+}}$, independent of $(Z_{\infty}^{(j)})_{j\ge 1}$ and satisfying \eqref{G_selfsimilar}, such that
$$
(X_{j})_{j\ge 1}\ \eqdist\ \big(G(Z_{\infty}^{(1)}+\ldots+Z_{\infty}^{(j)})\big)_{j\ge 1}.
$$
\end{Theorem}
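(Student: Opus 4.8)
The plan is to iterate the fixed-point equation \eqref{fixed_point_finite_mean} and to read off the limiting ``shape'' $G$ from the rescaled sequence $(X_j)$, in the same way that the martingale limit $Z_\infty$ is read off from the GWP. Viewing $X$ as the random map $j\mapsto X_j$ and iterating \eqref{fixed_point_finite_mean} with independent copies $R^{(1)},\ldots,R^{(n)}$ of the walk $R$ (all independent of $X$) gives, exactly as in the genealogical interpretation of \eqref{eq:basic_identity},
\[
(X_j)_{j\ge1}\ \eqdist\ \big(\mu^{-n}X_{\Sigma_n^{(j)}}\big)_{j\ge1},\qquad \Sigma_n^{(j)}:=Z_n^{(1)}+\ldots+Z_n^{(j)},
\]
where $(Z_n^{(i)})_{i\ge1}$ are i.i.d.\ copies of the generation-$n$ size $Z_n$ (the composition $R^{(n)}\circ\cdots\circ R^{(1)}$ being the forward GWP iteration), independent of the outer copy of $(X_j)$. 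Introducing the random nondecreasing functions $G_n(t):=\mu^{-n}X_{\lfloor\mu^n t\rfloor}$, built from a copy of $(X_j)$ independent of the genealogy, and $W_n^{(i)}:=\mu^{-n}Z_n^{(i)}$, this reads
\[
(X_j)_{j\ge1}\ \eqdist\ \big(G_n\big(W_n^{(1)}+\ldots+W_n^{(j)}\big)\big)_{j\ge1},\qquad G_n\perp(W_n^{(i)})_{i\ge1},
\]
and, by the martingale convergence $\mu^{-n}Z_n\to Z_\infty$ with $Z_\infty>0$ a.s.\ (where \eqref{eq:x_log_x} is used), $(W_n^{(i)})_{i\ge1}\to(Z_\infty^{(i)})_{i\ge1}$ a.s.

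Next I would establish tightness of the laws of $(G_n)$ in the space of nondecreasing functions with the vague topology. Taking $j=1$ above gives $X_1\eqdist G_n(W_n)$ with $G_n\perp W_n$; since $G_n$ is nondecreasing, for every $t,K>0$,
\[
\Prob\{X_1>K\}\ \ge\ \Prob\{G_n(t)>K,\,W_n\ge t\}\ =\ \Prob\{G_n(t)>K\}\,\Prob\{W_n\ge t\},
\]
whence $\Prob\{G_n(t)>K\}\le \Prob\{X_1>K\}/\Prob\{W_n\ge t\}$. As $\liminf_n\Prob\{W_n\ge t\}\ge\Prob\{Z_\infty>t\}>0$ for every finite $t$, the family $\{G_n(t)\}_n$ is tight for each $t$, and monotonicity upgrades this to tightness of $(G_n)$. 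Along a subsequence $G_{n_k}$ then converges to some nondecreasing $G$; since $G$ is independent of the continuously distributed points $\Sigma_\infty^{(j)}:=Z_\infty^{(1)}+\ldots+Z_\infty^{(j)}$, it is a.s.\ continuous at each $\Sigma_\infty^{(j)}$, so a convergence-together argument (via Skorokhod coupling) passes the composition to the limit and yields the representation $(X_j)_{j\ge1}\eqdist(G(\Sigma_\infty^{(j)}))_{j\ge1}$ with $G\perp(Z_\infty^{(j)})_j$.

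It remains to arrange that $G$ can be chosen to satisfy \eqref{G_selfsimilar}. The definition of $G_n$ yields the exact recursion $G_{n+1}(t)=\mu^{-1}G_n(\mu t)$, so the laws $\nu_n:=\mathcal L(G_n)$ satisfy $\nu_{n+1}=\Phi(\nu_n)$, where $\Phi$ is the homeomorphism induced by $g\mapsto\mu^{-1}g(\mu\,\cdot)$; a fixed point of $\Phi$ is precisely a law satisfying \eqref{G_selfsimilar}. I would produce one by a Krylov--Bogolyubov argument: the Ces\`aro means $\bar\nu_N:=N^{-1}\sum_{n<N}\nu_n$ are tight by the above, and any weak limit $\bar\nu$ is $\Phi$-invariant because $\Phi(\bar\nu_N)-\bar\nu_N=N^{-1}(\nu_N-\nu_0)\to0$. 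Randomizing the iteration index uniformly over $\{0,\ldots,N-1\}$ keeps the identity $X_j\eqdist G_\tau(W_\tau^{(1)}+\ldots+W_\tau^{(j)})$ intact (it holds for every fixed index), and since $\mathcal L((W_n^{(i)})_i)\to\mathcal L((Z_\infty^{(i)})_i)$ the $W$-factor decouples in the Ces\`aro average; passing to the limit as before then gives the representation with a $G$ that now also satisfies \eqref{G_selfsimilar}.

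The main obstacle is precisely this last step: reconciling the representation---which wants $G$ to be a genuine (subsequential) limit of the coupled functions $G_n$---with the semi-selfsimilarity \eqref{G_selfsimilar}, which is naturally available only for a $\Phi$-invariant (averaged) law. The delicate points are (i) maintaining the independence $G\perp(Z_\infty^{(j)})_j$ through the averaging limit, for which the decoupling $\mathcal L(W_n^{(i)})\to\mathcal L(Z_\infty^{(i)})$ is essential, and (ii) justifying that the evaluation map $(g,s)\mapsto g(s)$ passes to the limit, which relies on the absolute continuity of $Z_\infty$ on $(0,\infty)$ (so that the independent limit $G$ is a.s.\ continuous at the renewal points $\Sigma_\infty^{(j)}$) together with a topology on monotone functions compatible with evaluation at continuity points.
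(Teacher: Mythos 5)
Your proposal is correct in outline, but it follows a genuinely different route than the paper. The paper's proof is \emph{pathwise}: using the Kolmogorov consistency theorem it unfolds the fixed-point equation \emph{backwards}, embedding the given solution as $V_{0}$ in a two-sided stationary sequence $(V_{k})_{k\in\Z}$ of solutions linked by $V_{k}=\psi_{k+1}(V_{k+1})$. The rescaled measures $\nu_{n}[0,t]=\mu^{-n}X^{(n)}_{\lfloor t\mu^{n}\rfloor}$ then all live on one probability space and satisfy the exact recursion $\nu_{n}[0,t]=\nu_{n+1}[0,g_{n+1}(t)]$, whence almost sure vague convergence $\nu_{n}\vag\nu_{\infty}$ is proved via relative compactness and identification of all subsequential limits on the a.s.\ dense set of Proposition \ref{Prop:GW_summary} (A5); the representation $X^{(0)}_{j}=\nu_{\infty}[0,Z_{\infty}^{(1)}+\ldots+Z_{\infty}^{(j)}]$ then holds \emph{almost surely}, and \eqref{G_selfsimilar} drops out of stationarity of $(V_{k})$. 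You instead iterate the equation \emph{forwards}, so your $G_{n}(t)=\mu^{-n}X_{\lfloor\mu^{n}t\rfloor}$ are deterministic rescalings of a single copy of $X$; no a.s.\ convergence can be expected, and you work distributionally: tightness (your argument via $\Prob\{G_{n}(t)>K\}\le\Prob\{X_{1}>K\}/\Prob\{W_{n}\ge t\}$ is correct), subsequential weak limits, and then — since a subsequential limit need not be semi-selfsimilar — a Krylov--Bogolyubov Ces\`aro average of the laws $\nu_{n}=\Phi^{n}(\nu_{0})$ to manufacture a $\Phi$-invariant law. The trade-off: the paper pays upfront with the heavy a.s.\ machinery (Lemmas \ref{lemma_xlogx}, \ref{summable_lemma} and parts (A4), (A5), all leaning on the $(\xi\log\xi)$-condition) but gets uniqueness of the limit, an exact pathwise representation, and self-similarity for free; you avoid the backward extension and the summability lemmas entirely, but must then do real work at the step you yourself flag as the obstacle, namely showing that $F(\nu_{n},\lambda_{n})-F(\nu_{n},\lambda_{\infty})\to 0$ weakly (in the notation where $F(\nu,\lambda)$ is the law of $(g(\sum_{i\le j}w^{(i)}))_{j}$ with $g\sim\nu$ independent of $w\sim\lambda$), so that the $W$-factor may be replaced by $(Z_{\infty}^{(i)})_{i}$ inside the Ces\`aro average. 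This decoupling does go through: by tightness of $\{G_{n}(T)\}_{n}$ and a Markov/concentration bound, $\Prob\{dG_{n}((s-\eta,s+\eta])>\eps\}\le\sup_{n}\Prob\{G_{n}(T)>K\}+\eps^{-1}K\,Q(2\eta)$ with $Q$ the (vanishing, by absolute continuity of $Z_{\infty}$ under \eqref{eq:x_log_x}) concentration function of the independent evaluation point, which yields the required uniform equicontinuity in probability; combined with the Skorokhod coupling and the continuity of evaluation at continuity points (the distributional analogue of Lemma \ref{lemma_continuity}), both your steps (i) and (ii) close, so your route can be completed into a full proof.
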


\begin{Rem}\label{rem:SH_fixed_point}\rm
Our proof of Theorem \ref{fixed_point_solutions_fin_mean} does not work in the case when $\mu\in(1,\infty)$ and $\Erw\xi\log\xi=\infty$, and we do not know whether the result still holds without the $(\xi\log\xi)$-assumption after replacing $Z_{\infty}$ by $Z^{(SH)}_{\infty}$ as in Remark \ref{rem:SH_limit}. Of course, the direct part of the claim, namely that all vectors of the form $$\big(G(Z^{(SH)(1)}_{\infty}+\ldots+Z^{(SH)(j)}_{\infty})\big)_{j\ge 1}$$ are solutions to \eqref{fixed_point_finite_mean}, is obvious. However, it remains open whether the converse is true.
\end{Rem}

\subsection{The case when $\Erw\xi=\infty$}\label{subsec:infinite mean}

The behavior of GWP's with infinite mean has been studied by various authors including Darling \cite{Darling:70}, Seneta \cite{Seneta:73}, Davies \cite{Davies:78}, Grey \cite{Grey:77}, Schuh and Barbour \cite{BarbourSchuh:77}. In the last reference, it has been proved that for an arbitrary infinite-mean GWP there always exists a function $U$ and a sequence of deterministic constants $(C_{n})_{n\in\N}$ such that $U(Z_{n})/C_{n}$ converges almost surely to a non-degenerate random variable. However, the implicit construction of $U$ and $C_{n}$ makes it difficult to deduce any quantitative result for our model in this general framework. Here we work under the assumption of Davies~\cite{Davies:78}, which to the best of our knowledge, are the most general conditions allowing the explicit construction of $U$ and $C_{n}$. They also guarantee that the a.s.\ limit is positive on the set of survival. Davies' assumption is:
\begin{equation}\label{eq:Davies1}
x^{-\alpha - \gamma(x)}\ \le\ \Prob\{\xi \ge x\}\ \le\ x^{-\alpha + \gamma(x)}, \quad x\ge x_{0},
\end{equation}
for some $0<\alpha<1$, $x_{0}\ge 0$, and a nonincreasing, non-negative function $\gamma(x)$ such that $x^{\gamma(x)}$ is nondecreasing and $\int_{x_{0}}^{\infty} \gamma(\exp(e^{x}))\,dx<\infty$.
%\begin{equation}\label{eq:Davies2}
%\int_{x_{0}}^{\infty} \gamma(\exp(e^{x}))\ dx\ <\ \infty.
%\end{equation}
For example, this assumption is satisfied if $x^{\alpha}\Prob\{\xi >x\}$ stays bounded away from $0$ and $+\infty$ for $x\ge x_{0}$ (to see this, take $\gamma(x) = C/\log x$). For a GWP $(Z_{n})_{n\ge 0}$ with generic offspring variable $\xi$ satisfying the above assumptions and $Z_{0}=1$, Davies \cite[Thms. 1 and 2]{Davies:78} proved the existence of the limit
\begin{equation}\label{eq:Z_infty1}
Z_{\infty}^{*}\ :=\ \lim_{n\to\infty} \alpha^{n} \log (1+Z_{n})\ \in\ (0,\infty) \quad \text{a.s.}
\end{equation}
Since $\Prob\{\xi=0\}=0$ in our setting, we have $1\le Z_{n}\to\infty$ a.s. and therefore the equivalence of \eqref{eq:Z_infty1} with
\begin{equation}\label{eq:Z_infty}
Z_{\infty}^{*}\ =\ \lim_{n\to\infty} \alpha^{n} \log Z_{n} \in (0,\infty) \quad \text{a.s.}
\end{equation}
Moreover, $Z_{\infty}^{*}$ has a continuous distribution on $(0,\infty)$, see \cite[p.~715]{Hofstad+Hooghiemstra+Znamenski:07} or \cite[bottom of p.~3763]{Athreya:12}.

\begin{Theorem}\label{main1_for_RW_heavy}
Consider a leader-election procedure in which the distribution of $\xi$ satisfies \eqref{eq:Davies1} for some $\alpha\in(0,1)$. Then
\begin{equation}\label{eq:RW_S_conv_heavy}
\left(\alpha^{n} \log S^{(n)}_{j}\right)_{j\ge 1}\ \todistrfd\
\left(Z_{\infty}^{(*,1)}, Z_{\infty}^{(*,1)}\vee Z_{\infty}^{(*,2)},Z_{\infty}^{(*,1)}\vee Z_{\infty}^{(*,2)}\vee Z_{\infty}^{(*,3)}, \ldots\right),
\end{equation}
where $Z_{\infty}^{(*,1)},Z_{\infty}^{(*,2)},\ldots$ are independent copies of $Z_{\infty}^{*}$ and $\vee$ denotes the maximum. The distribution of the limit vector in \eqref{eq:RW_S_conv_heavy} satisfies the SFPE
\begin{equation}\label{fixed_point_inf_mean}
(X_{1},X_{2},\ldots)\ \eqdist\ \left(\alpha X_{R(1)},\alpha X_{R(2)},\ldots\right),
\end{equation}
where the random walk $(R(j))_{j\ge 1}$ on the right-hand side is independent of $(X_{j})_{j\ge 1}$.
\end{Theorem}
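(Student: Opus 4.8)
The plan is to transport both claims to the Galton--Watson side via the basic identity \eqref{eq:basic_identity} and then feed in Davies' almost sure limit \eqref{eq:Z_infty}. First I would record that the $j$-th coordinate of the right-hand side of \eqref{eq:basic_identity} counts the generation-$n$ descendants of the first $j$ founding individuals, so that, jointly in $j$,
$$
\big(S_1^{(n)},S_2^{(n)},\ldots\big)\ \eqdist\ \Big(Z_n^{(1)}+\ldots+Z_n^{(j)}\Big)_{j\ge 1},
$$
where $Z_n^{(1)},Z_n^{(2)},\ldots$ are independent copies of the generation-$n$ size $Z_n$ of a GWP started from a single individual. By \eqref{eq:Z_infty}, $\alpha^n\log Z_n^{(i)}\to Z_\infty^{(*,i)}$ a.s.\ for each $i$, the $Z_\infty^{(*,i)}$ being independent copies of $Z_\infty^*$.

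The heart of the argument is the elementary sandwich, valid for any fixed $j$ and positive reals $a_1,\ldots,a_j$,
$$
\max_{1\le i\le j}\log a_i\ \le\ \log\big(a_1+\ldots+a_j\big)\ \le\ \max_{1\le i\le j}\log a_i+\log j.
$$
Applying it with $a_i=Z_n^{(i)}$, multiplying by $\alpha^n$, and using $\alpha^n\log j\to 0$ (since $\alpha<1$ and $j$ is fixed), I obtain
$$
\alpha^n\log\big(Z_n^{(1)}+\ldots+Z_n^{(j)}\big)\ \longrightarrow\ \bigvee_{i=1}^{j}Z_\infty^{(*,i)}\quad\text{a.s.}
$$
Because all the limits are built from the \emph{same} family $(Z_\infty^{(*,i)})_i$, componentwise almost sure convergence upgrades for free to joint almost sure convergence of every finite block, hence to the claimed finite-dimensional convergence \eqref{eq:RW_S_conv_heavy}.

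For the fixed-point equation \eqref{fixed_point_inf_mean} I would verify it directly on the limit vector $X_j:=\bigvee_{i=1}^{j}Z_\infty^{(*,i)}$. The crucial single-coordinate identity is a fixed-point property of $Z_\infty^*$ itself: from the branching decomposition $Z_{n+1}\eqdist\sum_{i=1}^{\xi}Z_n^{(i)}$ (with $\xi$ independent of the iid copies $Z_n^{(i)}$), the same sandwich applied with a \emph{random} number $\xi$ of summands, and $\alpha^n\log\xi\to0$ a.s.\ (valid since $\xi<\infty$ a.s.), I get
$$
Z_\infty^*\ =\ \lim_{n\to\infty}\alpha^{n+1}\log Z_{n+1}\ \eqdist\ \alpha\bigvee_{i=1}^{\xi}Z_\infty^{(*,i)}.
$$
To lift this to the vector SFPE, write $R(j)=\xi_1+\ldots+\xi_j$ and group $\{1,\ldots,R(j)\}$ into consecutive blocks $(R(l-1),R(l)]$ of sizes $\xi_l$; then
$$
\alpha X_{R(j)}\ =\ \alpha\bigvee_{l=1}^{j}\ \bigvee_{i=R(l-1)+1}^{R(l)}Z_\infty^{(*,i)}\ =\ \bigvee_{l=1}^{j}W_l,
\qquad W_l:=\alpha\bigvee_{i=R(l-1)+1}^{R(l)}Z_\infty^{(*,i)}.
$$
The $W_l$ are independent (disjoint blocks of iid variables, independent block sizes) and, by the single-coordinate identity, each $W_l\eqdist Z_\infty^*$; thus $(W_l)_{l\ge1}\eqdist(Z_\infty^{(*,l)})_{l\ge1}$ and therefore $(\alpha X_{R(j)})_{j\ge1}\eqdist(X_j)_{j\ge1}$, which is precisely \eqref{fixed_point_inf_mean}. (Alternatively one may pass to the limit in the exact prelimiting recursion $\alpha^n\log S_j^{(n)}=\alpha\cdot\alpha^{n-1}\log S_{R^{(n)}(j)}^{(n-1)}$, but this requires a convergence-together argument with the random index $R^{(n)}(j)$.)

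The only genuinely delicate point is the random-$\xi$ version of the sandwich asymptotic: one must ensure the correction term, bounded by $\alpha^n\log\xi$, vanishes almost surely even though the number of summands is random. Conditioning on $\{\xi=m\}$ reduces this to the fixed-$j$ case, and summing over $m$ (using $\sum_m p_m=1$ and $\xi<\infty$ a.s.) closes the gap. Everything else reduces to the elementary sandwich together with Davies' theorem \eqref{eq:Z_infty}.
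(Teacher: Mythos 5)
Your proposal is correct. The convergence part \eqref{eq:RW_S_conv_heavy} follows essentially the paper's own route: part (B1) of Proposition \ref{Prop:GW_summary_Davies} likewise uses \eqref{eq:basic_identity} to decompose $R^{(n)}\circ\cdots\circ R^{(1)}(j)$ into increments $Z_{n,1},\ldots,Z_{n,j}$ that are independent single-ancestor GWPs, applies Davies' a.s.\ limit \eqref{eq:Z_infty} to each, and concludes $\alpha^n\log(Z_{n,1}+\ldots+Z_{n,j})\to Z_\infty^{(*,1)}\vee\ldots\vee Z_\infty^{(*,j)}$ a.s.; your explicit sandwich $\max_i\log a_i\le\log\sum_i a_i\le\max_i\log a_i+\log j$ is exactly the step the paper leaves implicit. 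Where you genuinely diverge is the SFPE \eqref{fixed_point_inf_mean}. The paper obtains it structurally: the limit vector arises as the a.s.\ limit of backward iterations of the iid random maps $\phi_n(x)=\alpha\bigl(x_{R^{(n)}(1)},x_{R^{(n)}(2)},\ldots\bigr)$, each a.s.\ continuous in the product topology on $\R^{\N}$, so the limit is automatically a distributional fixed point by the continuous mapping theorem (your parenthetical worry about a ``convergence-together argument with random indices'' is unfounded: conditionally on $R$ the indices are fixed, and product-topology continuity does all the work). You instead verify the fixed point algebraically on the explicit limit: first the one-dimensional identity $Z_\infty^*\eqdist\alpha\bigl(Z_\infty^{(*,1)}\vee\ldots\vee Z_\infty^{(*,\xi)}\bigr)$ from the branching decomposition, then the block decomposition of $\alpha X_{R(j)}$ into independent maxima $W_l\eqdist Z_\infty^*$. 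This buys self-containedness and, as a by-product, a proof of the one-dimensional fixed-point property that the paper itself invokes without proof (in the proof of Theorem \ref{main1_for_RW_heavy_N}, to argue that $Z_\infty^*$ is unbounded); the cost is that your argument is tied to the special product/maximum form of the limit, whereas the paper's continuity argument identifies any a.s.\ limit of such iterations as a solution. One small point of care in your version: the branching decomposition $Z_{n+1}=\sum_{i=1}^{\xi}Z_n^{(i)}$ should either be realized a.s.\ on the genealogical tree (jointly in $n$), or, if used only as an equality in distribution for each fixed $n$, you should conclude by uniqueness of distributional limits rather than by taking an a.s.\ limit on both sides; your conditioning-on-$\{\xi=m\}$ remark effectively handles this.
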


On the right-hand side of \eqref{eq:RW_S_conv_heavy}, we thus have the running maximum process of iid positive random variables instead of sums as in the finite-mean case. For a study of this process (including, for example, a proof of the Markov property and exact expressions for the transition probabilities), we refer to~\cite[Lectures 14, 15, 17]{Nevzorov:01}. Let us stress that this process has multiple elements which means that the original numbers of players remaining after $n$ rounds tend to build clusters (at least asymptotically on the log-scale). In fact, it follows from the R\'enyi theorem on records \cite[p.~58]{Nevzorov:01} that among the first $k$ elements of this process there are just $\simeq\log k$ distinct ones, as $k\to\infty$.

\begin{Theorem}\label{main1_for_RW_heavy_N}
Under the assumptions of Theorem \ref{main1_for_RW_heavy}, we have
\begin{equation*}
N^{(n)}_{\lfloor\exp(\cdot\alpha^{-n})\rfloor}\ \todistrD\ N''(\cdot),
\end{equation*}
weakly in the Skorokhod space $D$ endowed with the $M_1$-topology, where $N''(x):=\#\{k\in\N: Z_{\infty}^{(*,1)}\vee\ldots\vee Z_{\infty}^{(*,k)}\le x\}$, $x\ge 0$.
\end{Theorem}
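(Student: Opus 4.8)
The plan is to derive Theorem~\ref{main1_for_RW_heavy_N} from Theorem~\ref{main1_for_RW_heavy} in exactly the same dual fashion that Theorem~\ref{main1_for_RW_N} is obtained from Theorem~\ref{main1_for_RW}. The starting point is the duality relation stated in the introduction, namely $\{N_M^{(n)}\ge k\}=\{S_k^{(n)}\le M\}$. Setting $M=\lfloor\exp(x\alpha^{-n})\rfloor$ and taking logarithms, the event $\{N^{(n)}_{\lfloor\exp(x\alpha^{-n})\rfloor}\ge k\}$ is (up to the negligible floor correction) the event $\{\alpha^n\log S_k^{(n)}\le x\}$. By Theorem~\ref{main1_for_RW_heavy}, the vector $(\alpha^n\log S_k^{(n)})_{k\ge 1}$ converges in finite-dimensional distributions to the running-maximum process $(Z_\infty^{(*,1)}\vee\cdots\vee Z_\infty^{(*,k)})_{k\ge 1}$, and the limiting event $\{Z_\infty^{(*,1)}\vee\cdots\vee Z_\infty^{(*,k)}\le x\}$ is precisely $\{N''(x)\ge k\}$. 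Thus for each fixed $x$ and each fixed $k$ one gets the one-dimensional convergence $N^{(n)}_{\lfloor\exp(x\alpha^{-n})\rfloor}\idist N''(x)$.

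Next I would upgrade this to convergence of all finite-dimensional distributions of the process in $x$. Fix levels $0<x_1<\cdots<x_m$. The joint law of $(N^{(n)}_{\lfloor\exp(x_i\alpha^{-n})\rfloor})_{i\le m}$ is determined through the duality by the joint behavior of the $S_k^{(n)}$ across all relevant $k$; concretely, the collection of events $\{N^{(n)}_{\lfloor\exp(x_i\alpha^{-n})\rfloor}\ge k_i\}$ is expressible via $\{\alpha^n\log S_{k_i}^{(n)}\le x_i\}$, and these are continuous-from-above functionals of the finite-dimensional vector $(\alpha^n\log S_k^{(n)})_{k\le\max_i k_i}$. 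Since the limiting running-maximum process has a continuous distribution at every fixed level (inherited from the continuity of the law of $Z_\infty^*$ on $(0,\infty)$, so that ties $\{Z_\infty^{(*,1)}\vee\cdots\vee Z_\infty^{(*,k)}=x_i\}$ have probability zero), the continuous-mapping theorem applied to the counting functional yields finite-dimensional convergence of the counting processes. This is the routine but slightly bookkeeping-heavy part.

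The genuinely delicate point, and the reason the statement is phrased with the $M_1$-topology rather than the usual $J_1$-topology, is the tightness/functional upgrade. In the finite-mean case (Theorem~\ref{main1_for_RW_N}) the limit $N'$ is a renewal process whose jumps are all of size one, so $J_1$ is appropriate. Here the limit $N''$ is the counting function of the running maximum, which by the R\'enyi record remark has only $\simeq\log k$ distinct values among the first $k$; equivalently, $N''$ makes jumps of unbounded size, since whenever a new record among the $Z_\infty^{(*,j)}$ fails to appear the maximum stays put and the count increases by a block all at once at the next record time. Consequently the prelimit processes cannot converge in $J_1$ (nearby large jumps in the approximants must be matched, which $J_1$ forbids), and one must work in the weaker $M_1$-topology, which tolerates the coalescence of several jumps into a single larger jump. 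The main obstacle is therefore to establish $M_1$-tightness of $(N^{(n)}_{\lfloor\exp(\cdot\,\alpha^{-n})\rfloor})_n$: I would verify the standard $M_1$ oscillation condition of Skorokhod (controlling the $M_1$ modulus via the probability that three successive values $\alpha^n\log S_{k}^{(n)}$, $\alpha^n\log S_{k+1}^{(n)}$, $\alpha^n\log S_{k+2}^{(n)}$ straddle a small interval in a non-monotone way), using monotonicity of $k\mapsto S_k^{(n)}$ and the fact that the limit is itself monotone nondecreasing, which makes the $M_1$ modulus especially tractable. Once tightness in $M_1$ is in hand, it combines with the finite-dimensional convergence to give the asserted weak convergence $N^{(n)}_{\lfloor\exp(\cdot\,\alpha^{-n})\rfloor}\todistrD N''(\cdot)$ in $D$ under $M_1$.
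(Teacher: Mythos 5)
Your outline is correct in substance, but it follows a genuinely different route from the paper's. The paper verifies no tightness condition at all: it observes that $(N^{(n)}_{\lfloor\exp(x\alpha^{-n})\rfloor})_{x\ge 0}$ is the counting process of the nondecreasing sequence $(\alpha^{n}\log S^{(n)}_{k})_{k\ge 1}$, invokes Theorem~\ref{main1_for_RW_heavy} for the f.d.d.\ convergence of these sequences, and then applies a continuous-mapping argument (Lemma~\ref{lem:skorokhod_counting} in the Appendix): the map sending a nondecreasing sequence tending to $+\infty$, equipped with the topology of pointwise convergence, to its counting function is continuous into $D$ with the $M_1$-topology, so weak convergence of the sequences transfers directly to the counting processes. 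You instead push the f.d.d.\ convergence through the duality $\{N^{(n)}_{M}\ge k\}=\{S^{(n)}_{k}\le M\}$ (this part is fine, including the floor, since $S_{k}^{(n)}$ is integer-valued so $S_{k}^{(n)}\le\lfloor e^{x\alpha^{-n}}\rfloor$ iff $\alpha^{n}\log S_{k}^{(n)}\le x$, and the no-atoms/portmanteau argument), and then add $M_1$-tightness. One simplification you are missing in that last step: since $M\mapsto N^{(n)}_{M}$ is nondecreasing, every sample path of your prelimit process is nondecreasing in $x$, so the Skorokhod $M_1$ oscillation modulus is \emph{identically zero} (for a monotone function, any intermediate value lies on the segment spanned by the two endpoint values); there is no ``non-monotone straddling'' of three successive values to control. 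Tightness therefore reduces to stochastic boundedness of $N^{(n)}$ at each level $T$ plus oscillation control near the origin, both of which follow from your one-dimensional convergence. This is why the paper can dispose of the functional upgrade by a soft continuity lemma.

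There is, however, one point you omit that is genuinely needed: you must check that $N''(x)<\infty$ a.s., equivalently that $Z_{\infty}^{*}$ is unbounded above, i.e.\ $\Prob\{Z_{\infty}^{*}>x\}>0$ for every $x>0$. Indeed, $\Prob\{N''(x)\ge k\}=\Prob\{Z_{\infty}^{*}\le x\}^{k}$, so if $\Prob\{Z_{\infty}^{*}\le x\}=1$ for some $x$ then $N''(x)=\infty$ a.s., the limit is not a proper $D$-valued process, and your stochastic boundedness (hence tightness) fails because $\lim_{n}\Prob\{N^{(n)}(x)\ge k\}$ would not tend to $0$ as $k\to\infty$. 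The paper addresses exactly this: it notes that $\lim_{k}\alpha^{n}\log S_{k}^{(n)}=\infty$ a.s.\ (from $\xi>0$) and that $Z_{\infty}^{*}$ is unbounded above, the latter being derived from the fixed-point relation $\alpha\bigl(Z_{\infty}^{(*,1)}\vee\ldots\vee Z_{\infty}^{(*,\xi)}\bigr)\eqdist Z_{\infty}^{*}$. Adding this verification closes the gap in your argument.
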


\begin{Rem}\rm
The above theorem breaks down if $D$ is endowed with the $J_1$-topology. Indeed, the sample paths of the process $(N^{(n)}_{\lfloor\exp(x\alpha^{-n})\rfloor})_{x\ge 0}$ belong to the set of piecewise constant non-decreasing functions with jumps of size $1$ which is a closed subset of $D$ in the $J_1$-topology. But the process $N''$ has jumps of size at least $2$ with probability $1$ (due to the clustering), hence Theorem \ref{main1_for_RW_heavy_N} cannot hold when using the $J_1$-topology.
\end{Rem}

The next result is the counterpart of Theorem \ref{main1_for_RW_T} in the infinite-mean case.

\begin{Theorem}\label{main1_for_RW_heavy_T}
Under the assumptions of Theorem \ref{main1_for_RW_heavy}, we have, for any fixed $x>0$,
\begin{equation*}
T([e^{\alpha^{-n}x}])-n\ \idist\ T''(x)
\end{equation*}
as $n\to\infty$, where the distribution of $T''(x)$ is given by
$$
\Prob\{T''(x)\le k\}\ =\ \Prob\{Z_{\infty}^{*}>\alpha^{k}x\}, \quad k\in\Z.
$$
\end{Theorem}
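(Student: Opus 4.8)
The plan is to derive Theorem \ref{main1_for_RW_heavy_T} from Theorem \ref{main1_for_RW_heavy} exactly as Theorem \ref{main1_for_RW_T} was derived from Theorem \ref{main1_for_RW}, namely through the duality relation
\begin{equation*}
\{T(M)\le k\}\ =\ \{S_{1}^{(k)}>M\}
\end{equation*}
for $k,M\in\N$, which holds because all players $1,\dots,M$ have been eliminated within $k$ rounds precisely when the smallest surviving original label $S_{1}^{(k)}$ already exceeds $M$. First I would fix $x>0$ and an integer $k\in\Z$, and compute $\Prob\{T([e^{\alpha^{-n}x}])-n\le k\}=\Prob\{T([e^{\alpha^{-n}x}])\le n+k\}$. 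Using the duality with $M=[e^{\alpha^{-n}x}]$ and round-count $n+k$, this equals $\Prob\{S_{1}^{(n+k)}>[e^{\alpha^{-n}x}]\}$. Taking logarithms and inserting the scaling from Theorem \ref{main1_for_RW_heavy}, I would rewrite the event in terms of $\alpha^{n+k}\log S_{1}^{(n+k)}$.

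The key algebraic step is to match the normalizations. Since $\log[e^{\alpha^{-n}x}]=\alpha^{-n}x+o(1)$, the event $\{S_{1}^{(n+k)}>[e^{\alpha^{-n}x}]\}$ is asymptotically equivalent to $\{\log S_{1}^{(n+k)}>\alpha^{-n}x\}$, which upon multiplying by $\alpha^{n+k}$ becomes $\{\alpha^{n+k}\log S_{1}^{(n+k)}>\alpha^{k}x\}$. By Theorem \ref{main1_for_RW_heavy} (applied along the index $m:=n+k\to\infty$, with $k$ fixed), the one-dimensional marginal $\alpha^{m}\log S_{1}^{(m)}$ converges in distribution to $Z_{\infty}^{(*,1)}\eqdist Z_{\infty}^{*}$. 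Hence
\begin{equation*}
\lim_{n\to\infty}\Prob\{T([e^{\alpha^{-n}x}])-n\le k\}\ =\ \Prob\{Z_{\infty}^{*}>\alpha^{k}x\},
\end{equation*}
which is exactly the claimed distribution of $T''(x)$. Convergence in distribution of the integer-valued sequence $T([e^{\alpha^{-n}x}])-n$ then follows once I check that this limiting function of $k$ is a genuine distribution function on $\Z$.

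The one genuinely delicate point, and the step I expect to require the most care, is justifying that replacing the integer threshold $[e^{\alpha^{-n}x}]$ by the continuous threshold $e^{\alpha^{-n}x}$ costs nothing in the limit. Because the distribution of $Z_{\infty}^{*}$ is continuous on $(0,\infty)$ (as recorded after \eqref{eq:Z_infty}), the limiting law has no atoms, so the boundary value $\Prob\{Z_{\infty}^{*}=\alpha^{k}x\}=0$ for every $k$ and every $x>0$; this rules out any ambiguity at the threshold and guarantees that the error term $o(1)$ from the floor function does not affect the limit. I would make this rigorous by sandwiching: for the lower bound use $[e^{\alpha^{-n}x}]\le e^{\alpha^{-n}x}$ and for the upper bound use $[e^{\alpha^{-n}x}]> e^{\alpha^{-n}x}-1$, translating each into a strict inequality for $\alpha^{m}\log S_{1}^{(m)}$ against thresholds $\alpha^{k}x\pm\varepsilon_{n}$ with $\varepsilon_{n}\to0$, and then invoking continuity of the limit law together with the portmanteau theorem to squeeze both bounds to $\Prob\{Z_{\infty}^{*}>\alpha^{k}x\}$. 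Finally I would verify that $k\mapsto\Prob\{Z_{\infty}^{*}>\alpha^{k}x\}$ is nondecreasing in $k$ (since $\alpha\in(0,1)$ makes $\alpha^{k}x$ decreasing in $k$) with limits $0$ and $1$ as $k\to-\infty$ and $k\to+\infty$, confirming it is the distribution function of a proper $\Z$-valued random variable $T''(x)$.
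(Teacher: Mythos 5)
Your proposal is correct and follows essentially the same route as the paper: the duality $\{T(M)\le k\}=\{S_{1}^{(k)}>M\}$, the marginal convergence $\alpha^{n+k}\log S_{1}^{(n+k)}\idist Z_{\infty}^{*}$ from Theorem \ref{main1_for_RW_heavy}, and the continuity of the law of $Z_{\infty}^{*}$ to pass to the limit. The only difference is that your sandwiching argument for the floor is unnecessary: since $S_{1}^{(n+k)}$ is integer-valued, $\{S_{1}^{(n+k)}>\lfloor e^{\alpha^{-n}x}\rfloor\}=\{S_{1}^{(n+k)}>e^{\alpha^{-n}x}\}=\{\alpha^{n+k}\log S_{1}^{(n+k)}>\alpha^{k}x\}$ holds as an exact identity, which is what the paper uses.
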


An interesting example of an infinite-mean GWP is obtained by choosing the law of $\xi$, i.e. $(p_{n})_{n\ge 1}$, to be a Sibuya distribution with generating function
$$ f_{\alpha}(t) := \Erw t^{\xi}\ =\ 1 - (1 - t)^{\alpha}, \quad 0\le t \le 1, $$
for some parameter $\alpha\in (0,1)$.

\begin{Prop}\label{Prop_Sibuya}
If $\xi$ has a Sibuya distribution with parameter $\alpha\in (0,1)$, then weak convergence of point processes on $\R^{+}$ holds true, viz.
$$
\sum_{j=1}^{\infty} \delta_{\alpha^{n} \log S_{j}^{(n)}}\ \weakly\ \sum_{i=1}^{\infty} G_{i}\,\delta_{P_{i}},
$$
where $P_{1}<P_{2}<\ldots$ are the points of a standard Poisson process on $(0,\infty)$, and, given these points, the random variables $G_{1},G_{2},\ldots$ are conditionally independent with $G_{j}$ having a geometric distribution with parameter $e^{-P_{j}}$.
\end{Prop}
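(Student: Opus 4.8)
The plan is to deduce the Sibuya case from the general infinite-mean result, Theorem~\ref{main1_for_RW_heavy}, after making two ingredients explicit: the law of the Davies limit $Z_\infty^*$, and the structure of the running-maximum process of i.i.d.\ standard exponentials through the classical theory of records. Since the Sibuya law has $\Prob\{\xi\ge x\}\sim x^{-\alpha}/\Gamma(1-\alpha)$, it satisfies Davies' assumption~\eqref{eq:Davies1} (take $\gamma(x)=C/\log x$), so Theorems~\ref{main1_for_RW_heavy} and~\ref{main1_for_RW_heavy_N} apply. To identify $Z_\infty^*$, note that from $1-f_\alpha(t)=(1-t)^\alpha$ one gets $1-f_\alpha^{\circ n}(t)=(1-t)^{\alpha^n}$, so the $n$-th generation size $Z_n$ of the Galton--Watson process started from a single ancestor is itself Sibuya distributed with parameter $\alpha^n$. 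Using the exact tail $\Prob\{Z_n\ge k\}=\Gamma(k-\alpha^n)/(\Gamma(1-\alpha^n)\Gamma(k))$ and $\Gamma(k-\beta)/\Gamma(k)=k^{-\beta}(1+o(1))$ as $k\to\infty$, evaluated at $k_n:=\lceil e^{x\alpha^{-n}}\rceil$, I obtain for each fixed $x>0$
\begin{equation*}
\Prob\{\alpha^n\log Z_n\ge x\}\ =\ \Prob\{Z_n\ge k_n\}\ =\ \frac{k_n^{-\alpha^n}(1+o(1))}{\Gamma(1-\alpha^n)}\ \longrightarrow\ e^{-x},
\end{equation*}
because $k_n^{-\alpha^n}=e^{-x+o(1)}$ and $\Gamma(1-\alpha^n)\to1$. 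As the almost sure limit in \eqref{eq:Z_infty} has the same law as this distributional limit, $Z_\infty^*$ is standard exponential.

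Consequently Theorem~\ref{main1_for_RW_heavy} gives $(\alpha^n\log S_j^{(n)})_{j\ge1}\todistrfd(M_j)_{j\ge1}$ with $M_j:=E_1\vee\cdots\vee E_j$, where $E_i:=Z_\infty^{(*,i)}$ are i.i.d.\ standard exponentials. I would then upgrade this to weak convergence of the point processes $\cN_n:=\sum_{j\ge1}\delta_{\alpha^n\log S_j^{(n)}}$ towards $\cN:=\sum_{j\ge1}\delta_{M_j}$. The convenient device is the counting function: the identities $\cN_n([0,x])=N^{(n)}_{\lfloor e^{x\alpha^{-n}}\rfloor}$ and $\cN([0,x])=N''(x)$, combined with Theorem~\ref{main1_for_RW_heavy_N}, yield convergence of these nondecreasing integer-valued processes in $D$ under the $M_1$-topology, from which weak convergence of the point processes on $\R^+$ follows in the usual way; the limit $\cN$ is locally finite since $M_j\to\infty$ almost surely. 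It is exactly the $M_1$-topology that records the asymptotic clustering — several simple atoms of $\cN_n$ coalescing into one multiple atom of $\cN$ — which is why $J_1$ fails here, as noted after Theorem~\ref{main1_for_RW_heavy_N}.

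Finally I would identify $\cN$. The distinct atoms of $(M_j)$ are precisely the record values of the sequence $(E_i)$; by the lack of memory of the exponential distribution every successive record equals its predecessor plus an independent $\mathrm{Exp}(1)$ increment, so the record values $P_1<P_2<\cdots$ form a standard Poisson process on $(0,\infty)$ (R\'enyi's record theorem). Conditionally on a record at level $P_i$, each later draw exceeds $P_i$ with probability $e^{-P_i}$ and otherwise leaves the running maximum unchanged; hence the number $G_i$ of indices $j$ with $M_j=P_i$ is geometric on $\{1,2,\ldots\}$ with parameter $e^{-P_i}$, and the renewal structure at record epochs makes the $G_i$ conditionally independent given $(P_i)$. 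This produces $\cN=\sum_{i}G_i\,\delta_{P_i}$ with exactly the asserted conditional law.

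The step I expect to be most delicate is the passage from finite-dimensional convergence of the ranked points to genuine weak convergence of the random measures while correctly accounting for clustering: one must check that no mass escapes toward $0$ or $\infty$ and that a limiting atom of multiplicity $G_i$ is approximated by exactly $G_i$ nearby simple atoms of $\cN_n$. This is the very feature that excludes the $J_1$-topology and forces either the $M_1$-argument above or a direct computation of Laplace functionals.
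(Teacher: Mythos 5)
Your proof is correct, and its skeleton coincides with the paper's: both exploit the semigroup property $f_{\alpha}\circ f_{\beta}=f_{\alpha\beta}$ to conclude that $Z_{n}$ is Sibuya distributed with parameter $\alpha^{n}$, both identify $Z_{\infty}^{*}$ as standard exponential, and both then invoke classical record-value theory for iid exponentials (R\'enyi's theorem and Tata's representation, cited from Nevzorov in the paper) to obtain the limiting cluster process $\sum_{i}G_{i}\,\delta_{P_{i}}$ with conditionally geometric multiplicities. The one genuine difference is how exponentiality of $Z_{\infty}^{*}$ is established: you compute directly with the exact Sibuya tail $\Prob\{Z_{n}\ge k\}=\Gamma(k-\alpha^{n})/(\Gamma(1-\alpha^{n})\Gamma(k))$ and Gamma-function asymptotics --- where, strictly speaking, you should note that the estimate $\Gamma(k-\beta)/\Gamma(k)=k^{-\beta}(1+o(1))$ is needed uniformly as the parameter $\beta=\alpha^{n}\to 0$ while $k\to\infty$, which does follow from the digamma expansion --- whereas the paper isolates this step as Lemma \ref{lem:sibuya_null}, proved softly by representing a Sibuya variable as $\min\{n\in\N:B_{n}=1\}$ for independent Bernoulli variables with $\Prob\{B_{i}=1\}=\alpha/i$ and applying the Poisson limit theorem. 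The two arguments are interchangeable; yours is more computational and self-contained, while the paper's lemma is a cleaner statement of independent interest (a limit theorem for Sibuya laws as the parameter tends to $0$). You are also more explicit than the paper about upgrading finite-dimensional convergence to weak convergence of the point processes, via the counting-function identity $\cN_{n}([0,x])=N^{(n)}_{\lfloor e^{x\alpha^{-n}}\rfloor}$ and the $M_1$-convergence of Theorem \ref{main1_for_RW_heavy_N}; the paper treats this passage as immediate, so your care here is a useful supplement rather than a deviation.
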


So we have in the Sibuya case that after normalization with $\alpha^{n}\log x$ and for large $n$, the points $S_{j}^{(n)}$ form approximately a standard Poisson process and have geometrically distributed multiplicities (cluster sizes), their parameters being $\approx (S_{j}^{(n)})^{-\alpha^{n}}$.

\vspace{.1cm}
Our last theorem is the infinite-mean counterpart of Theorem \ref{fixed_point_solutions_fin_mean} and provides the description of the set of all solutions to \eqref{fixed_point_inf_mean} under the Davies' assumption \eqref{eq:Davies1} on $\xi$.

\begin{Theorem}\label{fixed_point_solutions_inf_mean}
Let $(X_{1},X_{2},\ldots)$ be a random element of $\R^{\N}$ such that $0\le X_{1}\le X_{2}\le \ldots$ and \eqref{fixed_point_inf_mean} holds with $(R(j))_{j\ge 0}$ independent of $(X_{j})_{j\in\N}$. Suppose further Davies' condition \eqref{eq:Davies1} and let $(Z_{\infty}^{(*,j)})_{j\in\N}$ be independent copies of $Z^{*}_{\infty}$, defined by \eqref{eq:Z_infty}. Then there exists a nondecreasing random process $G:\R^{+}\to\R^{+}$ satisfying
\begin{equation}\label{G_selfsimilar2}
(G(\alpha t)_{t\in \R^+}\ \eqdist\ (\alpha G(t))_{t\in \R^+}
\end{equation}
and independent of $(Z_{\infty}^{(*,j)})_{j\ge 1}$ such that
$$
(X_{j})_{j\in\N}\ \eqdist\ \left(G(Z_{\infty}^{(*,1)}\vee\ldots\vee Z_{\infty}^{(*,j)})\right)_{j\ge 1}.
$$
\end{Theorem}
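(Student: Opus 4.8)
The plan is to characterize all nondecreasing solutions $(X_j)_{j\ge 1}$ of the SFPE \eqref{fixed_point_inf_mean} by showing they are necessarily obtained by applying a semi-selfsimilar process $G$ to the running-maximum limit $(Z_\infty^{(*,1)}\vee\ldots\vee Z_\infty^{(*,j)})_{j\ge 1}$ furnished by Theorem \ref{main1_for_RW_heavy}. The natural strategy, mirroring the finite-mean case of Theorem \ref{fixed_point_solutions_fin_mean}, is to iterate the fixed-point equation. Iterating \eqref{fixed_point_inf_mean} $n$ times yields, for the composed random walks $R^{(n)}\circ\cdots\circ R^{(1)}$ independent of $(X_j)$,
\begin{equation*}
(X_j)_{j\ge 1}\ \eqdist\ \bigl(\alpha^n X_{S_j^{(n)}}\bigr)_{j\ge 1},
\end{equation*}
using the identity \eqref{eq:basic_identity} that identifies the iterated composition with the GWP survival positions $S_j^{(n)}$. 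First I would make rigorous this iteration, tracking a joint coupling so that $(S_j^{(n)})$ and $(X_j)$ stay independent at each stage.

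The key step is then to pass to the limit in $\alpha^n X_{S_j^{(n)}}$. Writing $\alpha^n X_{S_j^{(n)}} = H(\alpha^n\log S_j^{(n)})$ with $H(t):=X_{\lfloor e^{t/\alpha^n}\rfloor}\alpha^n$ — more precisely, I would define a rescaled process and show it converges jointly with the inner argument. By Theorem \ref{main1_for_RW_heavy}, $\alpha^n\log S_j^{(n)}\todistrfd Z_\infty^{(*,1)}\vee\ldots\vee Z_\infty^{(*,j)}$, and the monotonicity $0\le X_1\le X_2\le\ldots$ together with the semi-selfsimilarity relation \eqref{G_selfsimilar2} that the limit must satisfy forces the outer transformation to converge to a semi-selfsimilar process $G$. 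The heart of the argument is to extract a genuine limiting process $G$ from the family $t\mapsto \alpha^n X_{\lfloor e^{t\alpha^{-n}}\rfloor}$: one exploits the self-consistency built into the SFPE to show that the finite-dimensional distributions of these rescaled processes form a tight, Cauchy (consistent) family, so that a limit $G$ exists and automatically inherits \eqref{G_selfsimilar2}. The independence of $G$ from $(Z_\infty^{(*,j)})_{j\ge 1}$ comes from the independence of the composed random walk from $(X_j)$ at every finite stage, which survives in the limit.

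The main obstacle will be establishing that the outer rescaled processes actually converge to a well-defined limit process $G$, rather than merely extracting a subsequential limit, and simultaneously controlling the coupling between the outer process and the inner maxima $Z_\infty^{(*,1)}\vee\ldots\vee Z_\infty^{(*,j)}$. Unlike the finite-mean sum structure, here the inner limit is a running maximum with ties (the record structure noted after Theorem \ref{main1_for_RW_heavy}), so many coordinates $S_j^{(n)}$ collapse to the same limiting value; one must argue that $G$ is consistently defined on the (random) set of limit points and that the asymptotic clustering does not create ambiguity in reconstructing $X_j$. Controlling this requires the continuity of the distribution of $Z_\infty^*$ on $(0,\infty)$ (guaranteed under Davies' condition), which ensures that limit points are almost surely distinct before clustering and that $G$ is evaluated at continuity points. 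I expect the delicate part of the proof to be a careful tightness-and-consistency argument for the outer process, combined with an appeal to the explicit record/maximum structure to verify that the reconstructed $(G(Z_\infty^{(*,1)}\vee\ldots\vee Z_\infty^{(*,j)}))_{j\ge 1}$ indeed has the same law as $(X_j)_{j\ge 1}$.
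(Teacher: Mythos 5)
The setup of your proposal is sound: iterating \eqref{fixed_point_inf_mean} does give $(X_{j})_{j\ge 1}\eqdist(\alpha^{n}X_{S_{j}^{(n)}})_{j\ge 1}$ with the composed walks independent of $(X_{j})$, and granted everything else, independence of the limit components and the evaluation step (via continuity of the law of $Z_{\infty}^{*}$) would go through. The genuine gap is the step you yourself flag as ``the main obstacle'': convergence of the rescaled outer processes $H_{n}(t):=\alpha^{n}X_{\lfloor e^{t\alpha^{-n}}\rfloor}$. Note that pathwise $H_{n+1}(\alpha t)=\alpha H_{n}(t)$, so the law of $H_{n}$ is just the $n$-th iterate of a fixed scaling operator applied to the law of $H_{0}$; asking these iterates to converge is precisely the content of the theorem, and the SFPE gives no direct handle on it. The fixed-point relation constrains $X$ only along the random indices $S_{j}^{(n)}$, which after rescaling accumulate on the record set $\{M_{j}\}:=\{Z_{\infty}^{(*,1)}\vee\ldots\vee Z_{\infty}^{(*,j)}:j\ge 1\}$ --- an a.s.\ \emph{discrete, non-dense} random set; monotonicity sandwiches $H_{n}$ between its values there, but leaves the law of $H_{n}$ at deterministic times in the gaps unconstrained. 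So ``tightness plus a Cauchy/consistency argument from self-consistency'' is an assertion of the conclusion, not a proof. Nor can you retreat to subsequential limits: a subsequential limit of the iterates of a scaling operator need not be a fixed point of that operator, so a subsequential $G'$ would give $(X_{j})\eqdist(G'(M_{j}))_{j}$ but not the semi-selfsimilarity \eqref{G_selfsimilar2} demanded by the theorem (one would need an extra averaging/invariance argument you do not mention).

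This is exactly the difficulty the paper's proof is engineered to circumvent, and it does so by a different device than forward iteration in distribution. Via the Kolmogorov consistency theorem one builds a \emph{two-sided stationary} sequence of solutions $(V_{k})_{k\in\Z}$ coupled pathwise by $V_{k}=\phi_{k+1}(V_{k+1})$, so that the random measures $\upsilon_{n}[0,t]=\alpha^{n}\log X^{(n)}_{\lfloor\exp(t\alpha^{-n})\rfloor}$ (built from \emph{different}, coupled solutions, not from rescalings of one fixed $X$) satisfy exact pathwise recursions $\upsilon_{k}[0,t]=\upsilon_{n+k}[0,h_{n+k}\circ\cdots\circ h_{k+1}(t)]$. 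Almost sure relative compactness then holds, and --- crucially --- \emph{all} subsequential vague limits are identified as one and the same measure using parts (B2), (B4) and (B5) of Proposition \ref{Prop:GW_summary_Davies}, in particular the a.s.\ density of the random set $\{\alpha^{k}Z_{k,\infty}^{*}(t\alpha^{-k}):t\ge 0,\,k\in\N\}$ in $[0,\infty)$. That density property is where Davies' condition does real quantitative work, through Lemmas \ref{moments_bound_inf_mean} and \ref{summable_lemma_inf_mean}; in your outline Davies' condition only enters through the continuity of the distribution of $Z_{\infty}^{*}$, which is far too weak. Finally, \eqref{G_selfsimilar2} is obtained in the paper from the stationarity of $(V_{k})_{k\in\Z}$, a resource your construction does not have. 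Without a substitute for the backward coupling and the density argument, your plan cannot be completed.
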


\subsection{Stability of point processes: a natural connection}\label{subsec:stability of PP}

The description of the set of solutions to the fixed-point equations \eqref{fixed_point_finite_mean} and \eqref{fixed_point_inf_mean} provided by our Theorems \ref{fixed_point_solutions_fin_mean} and \ref{fixed_point_solutions_inf_mean} may be interpreted from a different point of view involving the notion of stability of point processes, see \cite{DavydovMolchanovZuyev:08,DavydovMolchanovZuyev:11} and \cite{Zanella+Zuyev:15}. To define such stability usually requires two operations, namely \emph{thinning} and \emph{rescaling}. Given a point process $\mathcal{X}:=\sum_{k=1}^{\infty}\delta_{X_{k}}$ in $[0,\infty)$ with $0\le X_{1}\le X_{2}\le \ldots$, and an increasing integer-valued random walk $R=(R(k))_{k\ge 1}$, we define the thinning of $\mathcal{X}$ by $R$ as
$$
\mathcal{X}\bullet R\ :=\ \sum_{k=1}^{\infty}\delta_{X_{R(k)}}.
$$
This random operation transforms $\mathcal{X}$ into a ``sparser'' point process $\mathcal{X}\bullet R$ by removing points of $\mathcal{X}$ with indices outside the range of the random walk $R$. In order to compensate such thinning, a second operation is used for rescaling, namely the usual multiplication $a\cdot\mathcal{X}:=\sum_{k=1}^{\infty}\delta_{aX_{k}}$, $a\in(0,1)$. We call a point process $\mathcal{X}$ {\it $a$-stable with respect to thinning by an integer-valued increasing random walk $R$} if
\begin{equation}\label{eq:stability_definition}
\mathcal{X}\ \eqdist\ a\cdot(\mathcal{X}\bullet R).
\end{equation}
Note that $\mathcal{X}$ is $a$-stable if and only if $\mathcal{X}^{\beta}$ is $a^{\beta}$-stable, where $\mathcal{X}^{\beta}:=\sum_{k=1}^{\infty}\delta_{X_{k}^{\beta}}$ and $\beta>0$. This observation implies that, given a random walk $R$, it is enough to study only $a$-stable point processes for some particular choice of $a\in(0,1)$.

\vspace{.1cm}
Adopting this viewpoint, Theorems \ref{fixed_point_solutions_fin_mean} and \ref{fixed_point_solutions_inf_mean} are nothing else but characterizations of $a$-stable point processes with respect to thinning by random walks. Moreover, the particular choice of $a$ ($a=\mu^{-1}$ in Theorem \ref{fixed_point_solutions_fin_mean} and $a=\alpha$ in Theorem \ref{fixed_point_solutions_inf_mean}) does not reduce generality which means that the aforementioned theorems actually provide the description of the set of solutions to \eqref{eq:stability_definition} for arbitrary $a\in (0,1)$.

\section{Proofs in the finite-mean case}\label{sec:finite mean}

\subsection{Auxiliary results about GWP's with finite-mean offspring distribution}
The results of this section are used in the proof of Theorem~\ref{fixed_point_solutions_fin_mean}.
\begin{Lemma}\label{lemma_xlogx}
Let $\theta$ be a positive random variable with $\mu:=\Erw\theta\in (1,\infty)$ and $\Erw\theta\log^{+}\theta<\infty$. Then, for each $p\in [1,2)$,
$$
\sum_{k=1}^{\infty}\Big(\Erw|\theta-\mu|\1_{\{\mu^{k-1}<|\theta-\mu|\le\mu^{k}\}}\Big)^{1/p}<\infty.
$$
\end{Lemma}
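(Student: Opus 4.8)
The plan is to center $\theta$ at its mean and reduce the claim to an elementary summability statement about the truncated moments of $Y := |\theta - \mu|$, which can then be resolved by a single application of Hölder's inequality. First I would record that the $(\theta\log\theta)$-hypothesis transfers to $Y$: since $Y \le \theta + \mu$ and $\log^{+} Y \le \log 2 + \log\mu + \log^{+}\theta$ (using $\mu>1$), one has $Y\log^{+} Y \le (\theta+\mu)(\log 2 + \log\mu + \log^{+}\theta)$, so that $\Erw Y < \infty$ and $\Erw Y\log^{+} Y < \infty$ follow from $\Erw\theta < \infty$ and $\Erw\theta\log^{+}\theta < \infty$.

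Writing $a_{k} := \Erw Y\1_{\{\mu^{k-1} < Y \le \mu^{k}\}}$, the goal is $\sum_{k\ge 1} a_{k}^{1/p} < \infty$. Two facts are needed. Since the intervals $(\mu^{k-1},\mu^{k}]$, $k\ge 1$, partition $(1,\infty)$, summation gives
$$
\sum_{k\ge 1} a_{k}\ =\ \Erw Y\1_{\{Y>1\}}\ \le\ \Erw Y\ <\ \infty,
$$
which already settles the case $p=1$. For the quantitative bound I would exploit the $(\theta\log\theta)$-information as a linearly weighted sum: on $\{\mu^{k-1} < Y\}$ with $k\ge 2$ one has $\log^{+} Y > (k-1)\log\mu$, whence
$$
\Erw Y\log^{+} Y\ \ge\ (\log\mu)\sum_{k\ge 2}(k-1)\,a_{k},
$$
and together with $\sum_{k} a_{k} < \infty$ this yields $\sum_{k\ge 1} k\,a_{k} < \infty$.

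It then remains to convert the linear-weight summability $\sum_{k} k a_{k} < \infty$ into $\sum_{k} a_{k}^{1/p} < \infty$ for $p\in(1,2)$. The device is to write $a_{k}^{1/p} = (k a_{k})^{1/p}\,k^{-1/p}$ and apply Hölder's inequality with the conjugate exponents $p$ and $q = p/(p-1)$:
$$
\sum_{k\ge1} a_{k}^{1/p}\ \le\ \Big(\sum_{k\ge1} k a_{k}\Big)^{1/p}\Big(\sum_{k\ge1} k^{-q/p}\Big)^{1/q}.
$$
The first factor is finite by the previous step, and the second is finite precisely because $q/p = 1/(p-1) > 1$ when $p < 2$.

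The one point that must be handled with care — rather than a genuine obstacle — is the interplay between the exponent $p$ and the weight produced by the $x\log x$ condition. A single logarithmic moment yields only the linear weight $k$, and Hölder then forces the restriction $q/p>1$, i.e.\ $p<2$. This is exactly where the hypothesis $p\in[1,2)$ enters, and no stronger decay of the $a_{k}$ is available without strengthening the moment assumption on $\theta$.
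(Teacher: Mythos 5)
Your proof is correct and is essentially the paper's own argument: both reduce the claim to the linear-weight summability $\sum_{k\ge 1} k\,\Erw|\theta-\mu|\1_{\{\mu^{k-1}<|\theta-\mu|\le\mu^{k}\}}<\infty$ (a direct consequence of the $x\log x$ assumption, since $k\lesssim 1+\log|\theta-\mu|/\log\mu$ on the $k$-th annulus) and then apply H\"older's inequality with conjugate exponents $p$ and $q=p/(p-1)$ to the factorization $a_{k}^{1/p}=(k a_{k})^{1/p}\,k^{-1/p}$, the restriction $p<2$ entering precisely through the convergence of $\sum_{k\ge 1}k^{-q/p}$. The only differences are cosmetic: you establish $\sum_{k}k a_{k}<\infty$ as a standalone step before invoking H\"older and you spell out the transfer of the logarithmic moment from $\theta$ to $|\theta-\mu|$, which the paper leaves implicit.
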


\begin{proof}
The statement is obvious for $p=1$ as the series on the left-hand side then reduces to $\Erw|\theta-\mu|\1_{\{|\theta-\mu|>1\}}$. So let $p\in(1,2)$ and choose $q>2>p$ such that $1/p+1/q=1$. Using H\"{o}lder's inequality $|\sum_{k} a_{k}b_{k}|\le (\sum_{k} |a_{k}|^p)^{1/p}(\sum_{k} |b_{k}|^q)^{1/q}$ with $b_{k}=k^{-1/p}$ and $a_{k}=\Big(k\Erw|\theta-\mu|\1_{\{\mu^{k-1}<|\theta-\mu|\le\mu^{k}\}}\Big)^{1/p}$, we obtain
\begin{align*}
\sum_{k=1}^{\infty}&\Big(\Erw|\theta-\mu|\1_{\{\mu^{k-1}<|\theta-\mu|\le\mu^{k}\}}\Big)^{1/p}\\
&\le\ \left(\sum_{k=1}^{\infty}k\,\Erw|\theta-\mu|\1_{\{\mu^{k-1}<|\theta-\mu|\le\mu^{k}\}}\right)^{1/p}\left(\sum_{k=1}^{\infty}k^{-q/p}\right)^{1/q}.
\end{align*}
The second series converges because $q>p$ and the first one can be bounded by
$$
\Erw\left(\sum_{k=1}^{\infty}\Big(\frac{\log|\theta-\mu|}{\log\mu}+1\Big)|\theta-\mu|\1_{\{\mu^{k-1}<|\theta-\mu|\le\mu^{k}\}}\right)
$$
which is finite because $\Erw\theta\log^{+}\theta<\infty$.\qed
\end{proof}

\begin{Lemma}\label{summable_lemma}
Let $(\theta^{(n)}_{k})_{n\in\N,\,k\in\N}$ be an array of independent copies of a positive random variable $\theta$ having $\mu=\Erw\theta\in (1,\infty)$ and $\Erw\theta\log^{+}\theta<\infty$. For $n\in\N$, define the increasing random walks
$$
R^{(n)}(0):=0,\quad R^{(n)}(k):=\theta^{(n)}_{1}+\ldots+\theta^{(n)}_{k},\quad k\in\N.
$$
Then, for any $T>0$,
$$
\sum_{n=1}^{\infty}\sup_{t\in[0,T]}\Big|\mu^{-n}R^{(n)}(\lfloor t\mu^{n-1}\rfloor)-t\Big|\ <\ \infty\quad\text{a.s.}
$$
\end{Lemma}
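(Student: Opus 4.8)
The plan is to prove the stronger statement that the sum of the expectations of the (nonnegative) summands is finite; by Tonelli this forces the series to converge a.s. Write, for $t\in[0,T]$ and with $m=\lfloor t\mu^{n-1}\rfloor$,
\[
\mu^{-n}R^{(n)}(m)-t\ =\ \mu^{-n}\big(R^{(n)}(m)-\mu m\big)+\big(\mu^{1-n}m-t\big).
\]
The second, purely deterministic, term is bounded in absolute value by $\mu^{1-n}$ because $0\le t\mu^{n-1}-m<1$, so summing it over $n$ gives a convergent geometric series and this part is harmless. It then suffices to control $M_n:=\sup_{0\le m\le K_n}\mu^{-n}|R^{(n)}(m)-\mu m|$, with $K_n:=\lfloor T\mu^{n-1}\rfloor$, and to show $\sum_n M_n<\infty$ a.s.

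First I would truncate the increments at level $\mu^n$, setting $\hat\theta^{(n)}_k:=\theta^{(n)}_k\1_{\{\theta^{(n)}_k\le\mu^n\}}$ and $\hat R^{(n)}(m):=\sum_{k=1}^m\hat\theta^{(n)}_k$. Since $\Erw\theta\log^+\theta<\infty$, one has $\sum_n\mu^n\Prob\{\theta>\mu^n\}\le\sum_n\Erw[\theta\1_{\{\theta>\mu^n\}}]=\Erw[\theta\,\#\{n\ge1:\theta>\mu^n\}]\le(\log\mu)^{-1}\Erw[\theta\log^+\theta]<\infty$, whence $\sum_n K_n\Prob\{\theta>\mu^n\}<\infty$. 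By Borel--Cantelli, a.s.\ for all large $n$ none of $\theta^{(n)}_1,\dots,\theta^{(n)}_{K_n}$ exceeds $\mu^n$, so $R^{(n)}$ and $\hat R^{(n)}$ agree on $\{0,\dots,K_n\}$; as finitely many terms do not affect convergence, it is enough to bound the truncated quantity. Writing $\mu_n:=\Erw\hat\theta^{(n)}_1$, the centering shift obeys $\mu^{-n}(\mu-\mu_n)K_n\le(T/\mu)\,\Erw[\theta\1_{\{\theta>\mu^n\}}]$, again summable, so I am reduced to the centered martingale quantity $\tilde M_n:=\sup_{0\le m\le K_n}\mu^{-n}|\hat R^{(n)}(m)-\mu_n m|$.

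The heart of the argument is an $L^p$ estimate for a fixed $p\in(1,2)$. Combining Doob's maximal inequality with the von Bahr--Esseen inequality for the sums of the independent, centered, bounded variables $\hat\theta^{(n)}_k-\mu_n$ gives $\Erw[\tilde M_n^p]\le C\,\mu^{-(p-1)n}\,\Erw|\hat\theta^{(n)}_1-\mu_n|^p$, hence by Jensen $\Erw\tilde M_n\le C'\,\mu^{-(p-1)n/p}\,(\Erw|\hat\theta^{(n)}_1-\mu_n|^p)^{1/p}$. After separating the contributions of $\{\theta>\mu^n\}$ and of the shift $\mu-\mu_n$ (both producing geometrically summable series), the only delicate term is $\Erw[|\theta-\mu|^p\1_{\{\theta\le\mu^n\}}]$. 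Decomposing $|\theta-\mu|$ into the $\mu$-adic blocks $\{\mu^{k-1}<|\theta-\mu|\le\mu^k\}$, using $|\theta-\mu|^p\le\mu^{k(p-1)}|\theta-\mu|$ on the $k$-th block (only blocks with $k\le n+1$ occur when $\theta\le\mu^n$), and interchanging the summations over $n$ and $k$ so that the geometric factors $\mu^{-(p-1)n/p}$ sum against $\mu^{k(p-1)/p}$, the whole series collapses to a constant multiple of $\sum_k b_k^{1/p}$ with $b_k=\Erw[|\theta-\mu|\1_{\{\mu^{k-1}<|\theta-\mu|\le\mu^k\}}]$. This is finite by Lemma~\ref{lemma_xlogx}, so $\sum_n\Erw\tilde M_n<\infty$ and the claim follows.

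The main obstacle is the heavy tail of $\theta$: under $\Erw\theta\log^+\theta<\infty$ alone there is no second moment, so an $L^2$ maximal inequality is unavailable, and, more tellingly, the corresponding bound would demand $\sum_k b_k^{1/2}<\infty$, which need not hold. Working with an exponent $p$ strictly below $2$ is precisely what makes the H\"older argument underlying Lemma~\ref{lemma_xlogx} applicable, and this is exactly why that lemma is formulated for $p\in[1,2)$ rather than allowing $p=2$. For any such $p$ all the error series are geometric except for the single block sum, which is governed by the lemma.
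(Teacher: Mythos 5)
Your proposal is correct and follows essentially the same route as the paper's proof: truncation at level $\mu^{n}$ with the exceptional (large-increment) part handled by Borel--Cantelli via $\Erw\theta\log^{+}\theta<\infty$, the truncation drift summed geometrically, and the centered truncated part controlled by Doob's maximal inequality combined with von Bahr--Esseen at an exponent $p\in(1,2)$, finishing with the $\mu$-adic block decomposition and Lemma~\ref{lemma_xlogx}. The only deviations (truncating $\theta$ itself rather than $\theta-\mu$, and phrasing the large-value step as agreement of the truncated and untruncated walks) are cosmetic.
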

\begin{proof}
Defining $\theta_{\leqslant,k}^{(n)}:=(\theta_{k}^{(n)}-\mu)\1_{\{|\theta_{k}^{(n)}-\mu|\le \mu^{n}\}}$ and $\theta_{>,k}^{(n)}:=(\theta_{k}^{(n)}-\mu)\1_{\{|\theta_{k}^{(n)}-\mu|>\mu^{n}\}}$, we have
\begin{align*}
\sup_{t\in[0,\,T]}\left|\mu^{-n}R^{(n)}(\lfloor t\mu^{n-1}\rfloor)-t\right|\ &\le\ \mu^{-n}\sup_{t\in[0,\,T]}|R^{(n)}(\lfloor t\mu^{n-1}\rfloor)-\mu\lfloor t\mu^{n-1}\rfloor|\\
&+\ \mu^{-n}\sup_{t\in[0,T]}|\mu\lfloor t\mu^{n-1}\rfloor-t\mu^{n}|\\
&\hspace{-2.5cm}\le\ \mu^{-n}\left(\sup_{t\in[0,T]}\left|\sum_{k=1}^{\lfloor t\mu^{n-1}\rfloor}\theta_{\leqslant,k}^{(n)}\right|+\sup_{t\in[0,T]}\left|\sum_{k=1}^{\lfloor t\mu^{n-1}\rfloor}\theta_{>,k}^{(n)}\right|\right)+\mu^{-n}.
\end{align*}
Hence it is enough to show that
\begin{equation}\label{eq:two_series}
\sum_{n=1}^{\infty}\mu^{-n}\sup_{0\le m\le \lfloor T\mu^{n-1}\rfloor}\left|\sum_{k=1}^{m}\theta_{\leqslant,k}^{(n)}\right|\quad\text{and}\quad\sum_{n=1}^{\infty}\mu^{-n}\sup_{0\le m\le\lfloor T\mu^{n-1}\rfloor}\left|\sum_{k=1}^{m}\theta_{>,k}^{(n)}\right|
\end{equation}
are both almost surely finite. As for the second series, this follows by the Borel-Cantelli lemma if we can show that
\begin{equation}\label{eq:summable_lemma}
\sum_{n=1}^{\infty}\Prob\left\{\text{there exists }k=1,\ldots,\lfloor T\mu^{n-1}\rfloor:\theta_{>,k}^{(n)}\ne 0\right\}\ <\ \infty.
\end{equation}
To this end, we use Boole's inequality to infer
\begin{align*}
\sum_{n=1}^{\infty}&\Prob\left\{\text{there exists }k=1,\ldots,\lfloor T\mu^{n-1}\rfloor:\theta_{>,k}^{(n)}\ne 0\right\}\ \le\ \sum_{n=1}^{\infty}(T\mu^{n-1})\Prob\left\{\theta_{>,1}^{(n)}\ne 0\right\}\\
&=\ \sum_{n=1}^{\infty}(T\mu^{n-1})\,\Prob\{|\theta-\mu|>\mu^{n}\}\ =\ \Erw\left(\sum_{n=1}^{\infty}(T\mu^{n-1})\1_{\{|\theta-\mu|>\mu^{n}\}}\right)\\
&\le\ \frac{T}{\mu}\,\Erw\left(\sum_{n=1}^{\infty}|\theta-\mu|\1_{\{|\theta-\mu|>\mu^{n}\}}\right)\ \le\ \frac{T}{\mu}\,\Erw\left((\theta+\mu)\sum_{n=1}^{\infty}\1_{\{\theta\ge \mu^{n}\}}\right)\\
&\le\ \frac{T}{\mu}\,\Erw\left((\theta+\mu)\sum_{n=1}^{\infty}\1_{\{\log^{+}\theta\ge n\log\mu\}}\right)\ \le\ \frac{T}{\mu}\,\Erw\left((\theta+\mu)\frac{\log^{+}\theta}{\log\mu}\right)\ <\ \infty
\end{align*}
where the finiteness of the last term follows from $\Erw\theta\log^{+}\theta<\infty$. To show that the first series in \eqref{eq:two_series} converges, we argue as follows:
\begin{align*}
\sum_{n=1}^{\infty}\mu^{-n}&\sup_{0\le m\le \lfloor T\mu^{n-1}\rfloor}\left|\sum_{k=1}^{m}\theta_{\leqslant,k}^{(n)}\right|\\
&\le\ \sum_{n=1}^{\infty}\mu^{-n}\sup_{0\le m\le\lfloor T\mu^{n-1}\rfloor}\left|\sum_{k=1}^{m}\theta_{\leqslant,k}^{(n)}-m\Erw\theta_{\leqslant,1}^{(n)}\right|\ +\ \frac{T}{\mu}\sum_{n=1}^{\infty}|\Erw\theta_{\leqslant,1}^{(n)}|.
\end{align*}
The last term on the right-hand side is finite because
$$
\sum_{n=1}^{\infty}|\Erw\theta_{\leqslant,1}^{(n)}|\ =\ \sum_{n=1}^{\infty}|\Erw\theta_{>,1}^{(n)}|\ \le\ \sum_{n=1}^{\infty}\Erw |\theta-\mu|\1_{\{|\theta-\mu|>\mu^{n}\}}\ <\ \infty,
$$
where the finiteness of the last sum has already been shown above. To bound the first term, note that $\big(\sum_{k=1}^{m}\big(\theta_{\leqslant,k}^{(n)}-\Erw\theta_{\leqslant,1}^{(n)}\big)\big)_{m\in\N}$ is an $L^{p}$-martingale for each $p\in(1,2]$ whence
\begin{align*}
&\Erw\left(\sup_{0\le m\le \lfloor T\mu^{n-1}\rfloor}\left|\sum_{k=1}^{m}\big(\theta_{\leqslant,k}^{(n)}-\Erw\theta_{\leqslant,1}^{(n)}\big)\right|\,\right)\ \le\ \left\|\sup_{0\le m\le \lfloor T\mu^{n-1}\rfloor}\left|\sum_{k=1}^{m}\big(\theta_{\leqslant,k}^{(n)}-\Erw\theta_{\leqslant,1}^{(n)}\big)\right|\,\right\|_{p}\\
&\hspace{.5cm}\le\ \frac{p}{p-1}\left\|\sum_{k=1}^{\lfloor T\mu^{n-1}\rfloor}\big(\theta_{\leqslant,k}^{(n)}-\Erw\theta_{\leqslant,1}^{(n)}\big)\right\|_p\ \le\ \frac{2p}{p-1}T^{1/p}\mu^{(n-1)/p}\left\|\theta_{\leqslant,1}^{(n)}-\Erw\theta_{\leqslant,1}^{(n)}\right\|_{p}\\
&\hspace{1cm}\le\ \frac{4p}{p-1}T^{1/p}\mu^{n/p}\left\|\theta_{\leqslant,1}^{(n)}\right\|_{p},\end{align*}
having utilized the inequalities by Doob and von Bahr-Esseen \cite[Formula 4]{BahrEsseen:65}. Put $q:=\frac{p}{p-1}$, thus $1/p+1/q=1$, and use the inequality $|\sum_{i} x_{i}|^{1/p}\le \sum_{i} |x_{i}|^{1/p}$ to infer
\begin{align*}
\sum_{n=1}^{\infty}\mu^{-n/q}\left\|\theta_{\leqslant,1}^{(n)}\right\|_{p}\ &\le\ \sum_{n=1}^{\infty}\mu^{-n/q}\left(1+\sum_{k=1}^{n}\Erw|\theta-\mu|^{p}\1_{\{\mu^{k-1}<|\theta-\mu|\le\mu^{k}\}}\right)^{1/p}\\
&\le\ \sum_{n=1}^{\infty}\mu^{-n/q}\left(1+\sum_{k=1}^{n}\mu^{k/q}\left(\Erw|\theta-\mu|\1_{\{\mu^{k-1}<|\theta-\mu|\le\mu^{k}\}}\right)^{1/p}\right)
\end{align*}
and then further (with $C$ denoting a suitable finite positive constant)
\begin{align*}
\sum_{n=1}^{\infty}\mu^{-n/q}&\sum_{k=1}^{n}\mu^{k/q}\left(\Erw|\theta-\mu|\1_{\{\mu^{k-1}<|\theta-\mu|\le\mu^{k}\}}\right)^{1/p}\\
&=\ \sum_{k=1}^{\infty}\mu^{k/q}\left(\Erw|\theta-\mu|\1_{\{\mu^{k-1}<|\theta-\mu|\le\mu^{k}\}}\right)^{1/p}\sum_{n=k}^{\infty}\mu^{-n/q}\\
&\le\ C\sum_{k=1}^{\infty}\left(\Erw|\theta-\mu|\1_{\{\mu^{k-1}<|\theta-\mu|\le\mu^{k}\}}\right)^{1/p},
\end{align*}
which is finite by Lemma \ref{lemma_xlogx} if $p\in(1,2)$. We thus arrive at
$$
\Erw\sum_{n=1}^{\infty}\mu^{-n}\sup_{0\le m\le \lfloor T\mu^{n-1}\rfloor}\left|\sum_{k=1}^{m}\theta_{\leqslant,k}^{(n)}-m\Erw\theta_{\leqslant,1}^{(n)}\right|\ <\ \infty
$$
and this completes the proof of the lemma.\qed
\end{proof}

Lemma \ref{summable_lemma} allows us to prove the following proposition which is the key ingredient to the proof of Theorem \ref{fixed_point_solutions_fin_mean}.

\begin{Prop}\label{Prop:GW_summary}
Let $(R^{(n)}(k))_{k\in\N_{0},n\in\N}$ be as in Lemma \ref{summable_lemma} with $\theta$ taking positive integer values only and put
\begin{equation}\label{eq:g_def} 
g_{n}(t)\ :=\ \mu^{-n}R^{(n)}(\lfloor\mu^{n-1}t\rfloor) 
\end{equation}
for $n\in\N$ and $t\ge 0$.
\begin{description}[(A.4)]\itemsep2pt
\item[(A1)] There exists a $D$-valued random process $(Z_{\infty}(t))_{t\ge 0}$ such that
$$
\big(g_{n}\circ \cdots\circ g_{1}(t)\big)_{t\ge 0}\ \stackrel{n\to\infty}{\longrightarrow}\ \big(Z_{\infty}(t)\big)_{t\ge 0}\quad\text{a.s.}
$$
in the space $D$ endowed with the $J_1$-topology. The process $(Z_{\infty}(t))_{t\ge 0}$ is the limit of the normalized number of descendants of individuals $1,\ldots, \lfloor t\rfloor$ in a GWP with generic offspring variable $\theta$ and countably many ancestors $1,2,\ldots$, thus
$$
Z_{\infty}(t)=Z_{\infty}^{(1)}+\ldots+Z_{\infty}^{(\lfloor t\rfloor)}, \quad t\ge 0,
$$
where the $Z_{\infty}^{(j)}$, $j\in\N$, denote independent copies of $Z_{\infty}$, the limit of the same normalized GWP with one ancestor.

\item[(A2)] For each $k\in\N_{0}$, there exists a copy $(Z_{k,\infty}(t))_{t\ge 0}$ of $(Z_{\infty}(t))_{t\ge 0}$ such that
$$
\big(g_{n}\circ \cdots\circ g_{k+1}(t)\big)_{t\ge 0}\ \stackrel{n\to\infty}{\longrightarrow}\ \big(\mu^{-k}Z_{k,\infty}(t\mu^k)\big)_{t\ge 0}\quad\text{a.s.}
$$
in the space $D$ endowed with the $J_1$-topology.

\item[(A3)] As $t\to\infty$,
$$
\inf_{k\ge 1}\,g_{k}\circ\cdots\circ g_{1}(t)\ \to\ \infty\quad\text{a.s.}
$$

\item[(A4)] For every fixed $T>0$,
$$
\sup_{k\ge 1}\sup_{t\in[0,T]}\Big|g_{n+k}\circ\cdots\circ g_{n+1}(t)-t\Big|\ \stackrel{n\to\infty}{\longrightarrow}\ 0\quad\text{a.s.}
$$
\item[(A5)] The set $\big\{\mu^{-k}Z_{k,\infty}(t\mu^k):t\ge 0,k\in\N\big\}$ is almost surely dense in $[0,\infty)$.
\end{description}
\end{Prop}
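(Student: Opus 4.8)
The plan is to reduce all five claims to the single algebraic identity that, for integer-valued increments, the composition of the $g_n$ collapses to a normalized Galton--Watson count. By induction on $n$, using that each $R^{(m)}$ is $\N_0$-valued (so the floor functions act trivially on the integer arguments produced at each stage) and that $\mu^{m-1}\cdot\mu^{-(m-1)}=1$, one checks
\[
g_n\circ\cdots\circ g_1(t)\ =\ \mu^{-n}R^{(n)}\circ\cdots\circ R^{(1)}(\lfloor t\rfloor)\ =\ \mu^{-n}\sum_{j=1}^{\lfloor t\rfloor}Z_n^{(j)},
\]
where $Z_n^{(j)}$ is the number of generation-$n$ descendants of ancestor $j$ in the GWP generated by $R^{(1)},\ldots,R^{(n)}$, the families $(Z_n^{(j)})_n$, $j\in\N$, being independent. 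Since $\mu^{-n}Z_n^{(j)}$ is a positive martingale converging a.s.\ to $Z_\infty^{(j)}>0$ (here $\Erw\theta\log^+\theta<\infty$ enters), the right-hand side converges for each fixed $t$ to $\sum_{j=1}^{\lfloor t\rfloor}Z_\infty^{(j)}=Z_\infty(t)$ a.s. Both pre-limit and limit are step functions that are constant on $[m,m+1)$ and jump only at the integers, so coordinatewise convergence of the finitely many jump sizes on $[0,T]$ upgrades at once to uniform convergence on compacts, hence to $J_1$-convergence in $D$; this is (A1). For (A2) I apply (A1) to the shifted array $(R^{(k+m)})_{m\ge1}$: the associated maps $\tilde g_m$ satisfy $g_n\circ\cdots\circ g_{k+1}(t)=\mu^{-k}\,\tilde g_{n-k}\circ\cdots\circ\tilde g_1(\mu^k t)$, and composing the uniform-on-compacts limit $\tilde g_{n-k}\circ\cdots\circ\tilde g_1\to Z_{k,\infty}$ with the deterministic scaling $t\mapsto\mu^k t$ and the prefactor $\mu^{-k}$ yields exactly $\mu^{-k}Z_{k,\infty}(t\mu^k)$.

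For (A4) I use Lemma \ref{summable_lemma}: with $\varepsilon_n(T):=\sup_{t\in[0,T]}|g_n(t)-t|$ the lemma gives $\sum_n\varepsilon_n(T)<\infty$ a.s.\ for every $T$. Fix $T$ and set $\psi_j:=g_{n+j}\circ\cdots\circ g_{n+1}$, $\psi_0=\mathrm{id}$. I will prove by induction on $j$ that $|\psi_j(t)-t|\le\sum_{i=n+1}^{n+j}\varepsilon_i(T+1)$ for $t\in[0,T]$. The only point needing care is that the bound $|g_m(s)-s|\le\varepsilon_m(T+1)$ may be invoked only while $s\le T+1$; but for $n$ large enough that $S_n:=\sum_{i>n}\varepsilon_i(T+1)\le1$, the same induction simultaneously keeps $\psi_{j-1}(t)\le t+S_n\le T+1$, closing the bootstrap. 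Taking the supremum over $j\ge1$ and $t\in[0,T]$ bounds the quantity in (A4) by $S_n\to0$.

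For (A3) the key is the elementary exchange of infimum and sum applied to the nonnegative summands in $g_k\circ\cdots\circ g_1(t)=\sum_{j=1}^{\lfloor t\rfloor}\mu^{-k}Z_k^{(j)}$:
\[
\inf_{k\ge1}g_k\circ\cdots\circ g_1(t)\ \ge\ \sum_{j=1}^{\lfloor t\rfloor}\,\inf_{k\ge1}\mu^{-k}Z_k^{(j)}\ =:\ \sum_{j=1}^{\lfloor t\rfloor}W_j .
\]
Each $W_j>0$ a.s., because the sequence $k\mapsto\mu^{-k}Z_k^{(j)}$ has strictly positive entries (as $Z_k\ge1$) and converges to $Z_\infty^{(j)}>0$, hence is bounded below by a positive constant. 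As the $W_j$ are i.i.d.\ and positive, $\sum_{j=1}^{\lfloor t\rfloor}W_j\uparrow\infty$ a.s.\ by the strong law, which is (A3).

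The delicate step is (A5), and it is the one I expect to cost the most. By (A2) the set in question equals $\bigcup_{k\ge1}A_k$ with $A_k=\{\mu^{-k}\sum_{j=1}^{m}Z_{k,\infty}^{(j)}:m\in\N_0\}$, a renewal set whose gaps are i.i.d.\ copies of $\mu^{-k}Z_\infty$. To prove density I fix $L>0$ and $\varepsilon>0$ and estimate the probability of the event $B_k$ that $A_k$ is $\varepsilon$-dense in $[0,L]$. With $N_k:=\lceil2L\mu^k\rceil$, the complement $B_k^c$ lies in the union of $\{\sum_{j=1}^{N_k}Z_{k,\infty}^{(j)}\le L\mu^k\}$ (the walk fails to cross $L$), whose probability tends to $0$ by the weak law of large numbers, and $\{\max_{j\le N_k}Z_{k,\infty}^{(j)}>\varepsilon\mu^k\}$ (some gap is too large), whose probability is at most $N_k\,\Prob\{Z_\infty>\varepsilon\mu^k\}$. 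The crucial input is $\Erw Z_\infty=1<\infty$ (Kesten--Stigum), which gives $y\,\Prob\{Z_\infty>y\}\to0$ and hence $\mu^k\,\Prob\{Z_\infty>\varepsilon\mu^k\}\to0$; thus $\Prob\{B_k^c\}\to0$. Since no independence across $k$ is available, I finish with the soft remark that $\Prob\{B_k^c\}\to0$ already forces $\Prob\{\bigcup_{k\ge K}B_k\}=1$ for every $K$, so $B_k$ occurs infinitely often a.s.; intersecting over the countable family $\varepsilon=1/p$, $L=q$ shows that $\bigcup_k A_k$ is a.s.\ dense in $[0,\infty)$. The main obstacle is precisely this last argument: recognizing that the finiteness of $\Erw Z_\infty$ is exactly what controls the largest gap, and replacing the unavailable Borel--Cantelli independence by the $\limsup$ argument above.
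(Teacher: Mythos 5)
Your proposal is correct, and for (A1)--(A3) it follows the paper's proof essentially verbatim: the same collapse of the compositions to $\mu^{-n}R^{(n)}\circ\cdots\circ R^{(1)}(\lfloor t\rfloor)$ (integer arguments killing the floors), the same martingale limit of the normalized GWP, the same shift argument for (A2), and the same inf/sum exchange with $\inf_{k}\mu^{-k}Z_k^{(j)}>0$ for (A3). You deviate in (A4) and, more substantially, in (A5). For (A4), the paper first invokes (A3) to find a random level $T_1$ with $\inf_k g_k\circ\cdots\circ g_1(T_1)\ge T$, deduces from this an a priori bound $A_1<\infty$ for all iterated compositions started in $[0,T]$, and then telescopes, applying Lemma \ref{summable_lemma} on the random interval $[0,A_1]$; your bootstrap instead keeps every intermediate argument inside the fixed interval $[0,T+1]$ once the tail sum $S_n\le 1$, so you never need (A3) nor a random level --- a slightly more self-contained route resting on the same Lemma \ref{summable_lemma}. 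For (A5) the two arguments are genuinely different. The paper argues pathwise: by (A4), for given $s$ and $\eps$ there is a random $m$ with $|g_{m+k}\circ\cdots\circ g_{m+1}(s)-s|\le\eps/2$ for all $k$, and letting $k\to\infty$ and using (A2) exhibits the point $\mu^{-m}Z_{m,\infty}(s\mu^m)$ within $\eps$ of $s$; no moment information on $Z_\infty$ beyond its a.s.\ positivity is used. You instead treat each $A_k$ as a renewal set with iid gaps distributed as $\mu^{-k}Z_\infty$, control the crossing of level $L$ by the weak law of large numbers and the largest gap by a union bound, which requires the Kesten--Stigum fact $\Erw Z_\infty=1$ (available here, since $\Erw\theta\log^+\theta<\infty$ is a standing assumption of the proposition), and you correctly replace the unavailable Borel--Cantelli step (no independence across $k$) by the observation that $\Prob\{B_k\}\to 1$ already forces $\Prob\{\limsup_k B_k\}=1$. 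Your version costs the extra input $\Erw Z_\infty<\infty$ but is independent of (A4) and yields a more quantitative conclusion: a \emph{single} renewal set $A_k$, for large $k$, is $\eps$-dense in $[0,L]$ with probability close to one, whereas the paper's softer pathwise argument only produces one nearby point per $(s,\eps)$ by recycling (A4) and (A2).
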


\begin{Rem}\rm
To motivate the definition of $g_{n}$ in \eqref{eq:g_def}, consider a GWP starting from countably many individuals $1,2,\ldots$, as shown in Figure~\ref{fig2}. Since the expected number of offspring of one individual is $\mu$, it is natural to place the $j$-th individual in generation $n$ at position $\mu^{-n} j$. By definition of $g_{n}$, we have $g_{n}(\mu^{-(n-1)} j) = \mu^{-n} R^{(n)}(j)$ for all $j\in\N$, so that $g_{n}$ maps the position of the $j$-th individual in generation $n-1$ to the position of its last offspring in generation $n$. By the law of large numbers, $g_{n}(t)$ should be close to $t$ for large $n$. Part (A4) of Proposition \ref{Prop:GW_summary} provides the confirmation of this in a very strong uniform sense, and it may also be assessed graphically in Figure~\ref{fig2} where slope line segments connecting individuals in consecutive generations become closer and closer to vertical lines as $n$ grows.
\end{Rem}

\begin{proof}[of Proposition \ref{Prop:GW_summary}]
(A1) Since $g_{n}\circ \cdots\circ g_{1}(t) = \mu^{-n}R^{(n)} \circ \cdots \circ R^{(1)}(\lfloor t\rfloor)$, the assertion is equivalent to
\begin{equation}\label{eq:RW_proof_1}
\left(\mu^{-n}R^{(n)} \circ \cdots \circ R^{(1)}(j)\right)_{j\in\N_{0}}\ \stackrel{n\to\infty}{\longrightarrow}\ \left(Z_{\infty}(j)\right)_{j\in\N_{0}}\quad\text{a.s.}
\end{equation}
For each $n$, the sequence on the left-hand side constitutes a random walk (that is, it has iid increments) since a composition of independent increasing, $\N$-valued random walks is again a random walk (a similar statement in the theory of L\'evy processes is well-known and called Bochner's subordination). Hence, in order to prove~\eqref{eq:RW_proof_1}, it suffices to show that
\begin{equation}\label{eq:RW_proof_{2}}
\mu^{-n}R^{(n)} \circ \cdots \circ R^{(1)}(1)\ \stackrel{n\to\infty}{\longrightarrow}\ Z^{(1)}_{\infty}\quad\text{a.s.}
\end{equation}
The quantity on the left is the normalized number of individuals at time $n$ in a GWP with generic offspring variable $\theta$ and one ancestor, and \eqref{eq:RW_proof_{2}} follows from the a.s.\ convergence of its normalization (which is a nonnegative martingale).

\vspace{0.2cm}
(A2) This follows from (A1), when using the representation
$$
g_{n}\circ \cdots\circ g_{k+1}(t)\ =\ \mu^{-k}\mu^{-(n-k)}R^{(n)} \circ \cdots \circ R^{(k+1)}(\lfloor t\mu^k\rfloor),\quad t\ge 0,
$$
valid for $n>k$.

\vspace{0.2cm}
(A3) Here we have
$$
g_{k}\circ\cdots\circ g_{1}(t)\ =\ \sum_{j=1}^{\lfloor t\rfloor}\mu^{-k}Z^{(j)}_{k},\quad t\ge 0,
$$
where the $(Z^{(j)}_{k})_{k\ge 0}$, $j\in\N$, are independent copies of a GWP $(Z_{k})_{k\ge 0}$ with generic offspring variable $\theta$ and $Z_{0}=1$. Passing to the infimum yields
$$
\inf_{k\in\N}g_{k}\circ\cdots\circ g_{1}(t)\ \ge\ \sum_{j=1}^{\lfloor t\rfloor}\inf_{k\ge 1}(\mu^{-k}Z^{(j)}_{k}),\quad t\ge 0
$$
and the result follows from $\Prob\big\{\inf_{k\ge 1}(\mu^{-k}Z^{(1)}_{k})=0\big\}=0$.

\vspace{.2cm}
(A4) Fix $T>0$. From (A3), we know that there exists a random $T_{1}>0$ such that
$$
\inf_{k\ge 1}g_{k}\circ\cdots\circ g_{1}(T_{1})\ \ge\ T\quad\text{a.s.}
$$
We further note that
\begin{align*}
A_{1}\ &:=\ \sup_{k\ge 0}\,\sup_{n>k}\,\sup_{t\in[0,\,T]}\,g_{n}\circ\cdots\circ g_{k+1}(t)\ =\ \sup_{k\ge 0}\,\sup_{n>k}\,g_{n}\circ\cdots\circ g_{k+1}(T)\\
&\,\le\ \sup_{k\ge 0}\,\sup_{n>k}\,g_{n}\circ\cdots\circ g_{1}(T_{1})\ =\ \sup_{n\ge 1}\,g_{n}\circ\cdots\circ g_{1}(T_{1})\ <\ \infty\quad\text{a.s.},
\end{align*}
the finiteness being ensured by (A1), and that
$$
\big|g_{n+k}\circ\cdots\circ g_{n+1}(t)-t\big|\ \le\ \big|g_{n+1}(t)-t\big|\,+\,\sum_{j=2}^{k}\big|g_{n+j}\circ\cdots\circ g_{n+1}(t)-g_{n+j-1}\circ\cdots\circ g_{n+1}(t)\big|
$$
by the triangle inequality. Consequently,
\begin{align*}
\sup_{k\in\N}&\sup_{t\in[0,\,T]}\big|g_{n+k}\circ\cdots\circ g_{n+1}(t)-t\big|\\
&\le\ \sup_{t\in[0,\,T]}\big|g_{n+1}(t)-t\big|\ +\ \sum_{j=2}^{\infty}\sup_{t\in[0,\,T]}\big|g_{n+j}\circ\cdots\circ g_{n+1}(t)-g_{n+j-1}\circ\cdots\circ g_{n+1}(t)\big|\\
&\le\ \sup_{t\in[0,\,T]}\big|g_{n+1}(t)-t\big|\ +\ \sum_{j=2}^{\infty}\sup_{s\in[0,\,A_{1}]}\big|g_{n+j}(s)-s\big|\ \stackrel{n\to\infty}{\longrightarrow}\ 0
\end{align*}
by Lemma \ref{summable_lemma}.

\vspace{0.2cm}
(A5) By (A4), for any $s\in [0,\infty)$ and $\eps>0$, there exists a random $m\in\N$ such that
\begin{equation}\label{eq:GW_dense_{1}}
s-\eps/2\ \le\ g_{m+k}\circ\cdots\circ g_{m+1}(s)\ \le\ s+\eps/2
\end{equation}
for all $k\in\N$. Choosing $k$ sufficiently large and applying (A2), we obtain
\begin{equation}\label{eq:GW_dense_{2}}
\mu^{-m}Z_{m,\infty}(s\mu^m)-\eps/2\ \le\ g_{m+k}\circ\cdots\circ g_{m+1}(s)\ \le\ \mu^{-m}Z_{m,\infty}(s\mu^m)+\eps/2.
\end{equation}
Finally, \eqref{eq:GW_dense_{1}} and \eqref{eq:GW_dense_{2}} imply
$$
s-\eps\ \le\ \mu^{-m}Z_{m,\infty}( s\mu^m)\ \le\ s+\eps
$$
and the proof is complete.\qed
\end{proof}

\subsection{Proof of Theorems \ref{main1_for_RW}, \ref{main1_for_RW_N}, \ref{main1_for_RW_T} and \ref{fixed_point_solutions_fin_mean}}

%We use the notation introduced in Proposition \ref{Prop:GW_summary}.

Define a random map $\psi:\R^{\N}\to\R^{\N}$ by
\begin{equation}\label{eq:psi_mapping_definition}
\psi((x_{1},x_{2},\ldots))\ :=\ \frac{1}{\mu}\left(x_{R(1)},x_{R(2)},\ldots\right)
\end{equation}
and further $\psi_{n}:\R^{\N}\to\R^{\N}$ for $n\in\N$ by
\begin{equation}\label{eq:psi_seq_mapping_definition}
\psi_{n}((x_{1},x_{2},\ldots))\ :=\ \frac{1}{\mu}\left(x_{R^{(n)}(1)},x_{R^{(n)}(2)},\ldots\right)
\end{equation}
which are independent copies of $\psi$.

\begin{proof}[of Theorem \ref{main1_for_RW}]
In view of the basic identity \eqref{eq:basic_identity}, we have
\begin{align*}
\Big(\frac{S_{1}^{(n)}}{\mu^{n}},\frac{S_{2}^{(n)}}{\mu^{n}},\ldots\Big)\ &\eqdist\
\Big(\mu^{-n}R^{(n)}\circ\cdots\circ R^{(1)}(1),\mu^{-n}R^{(n)}\circ\cdots\circ R^{(1)}(2),\ldots\Big)
\\
&=\ \psi^{(1)}\circ\cdots\circ \psi^{(n)}(1,2,\ldots).
\end{align*}
By (A1) of Proposition \ref{Prop:GW_summary}, the last sequence converges a.s. to $(Z_{\infty}^{(1)},Z_{\infty}^{(1)}+Z_{\infty}^{(2)},\ldots)$ which proves \eqref{eq:RW_S_conv}. The fixed-point relation \eqref{fixed_point_finite_mean} follows from the almost sure continuity of the map $\psi$
with respect to the product topology on $\R^{\N}$ and the continuous mapping theorem. This completes the proof of Theorem \ref{main1_for_RW}.\qed
\end{proof}

\begin{proof}[of Theorem \ref{main1_for_RW_N}]
Observe  that $(N^{(n)}_{\lfloor x\mu^{n}\rfloor})_{x\ge 0}$ is the counting process associated with the increasing random sequence $(\mu^{-n} S_{k}^{(n)})_{k\in\N}$. By Theorem \ref{main1_for_RW}, the finite-dimensional distributions of the latter sequence converge as $n\to\infty$ to those of the strictly increasing random sequence  $(Z_{\infty}^{(1)} + \ldots+ Z_\infty^{(k)})_{k\in\N}$ with associated counting process $(N'(x))_{x\ge 0}$. Also, for every fixed $n\in\N$,
$$
\lim_{k\to\infty}  S_{k}^{(n)}\ =\ \lim_{k\to\infty} (Z_{\infty}^{(1)} + \ldots+ Z_\infty^{(k)})\ =\ + \infty \quad \text{a.s.}
$$
because $\xi>0$ a.s.\  and $Z_\infty^{(1)}>0$ a.s. By Lemma \ref{lem:skorokhod_counting} from the Appendix, this implies the weak convergence of the corresponding counting processes on the Skorokhod space $D$ endowed with the $J_1$-topology.
\end{proof}

%Note that \textcolor{red}{for every $n\in\N$ the process $(N^{(n)}_{\lfloor x\alpha^{-n}\rfloor})_{x\ge 0}$ as well as the limit $(N^{(n)}(x))_{x\ge 0}$ are a.s. non-decreasing. Hence, convergence in the $M_1$-topology is implied by the convergence of finite-dimensional distributions.} Fixing $0\le x_{1}<\ldots<x_m<\infty$ and $k_{1},\ldots,k_m\in\N_{0}$, we infer
%\begin{align*}
%\Prob\left\{N^{(n)}_{\lfloor\mu^{n} x_{j}\rfloor}\ge k_{j},\;1\le j\le m\right\}\ =~&\Prob\left\{S_{k_{j}}^{(n)}\le \lfloor\mu^{n} x_{j}\rfloor,\;1\le %j\le m\right\}\\
%=~&\Prob\left\{\mu^{-n}S_{k_{j}}^{(n)}\le  x_{j},\;1\le j\le m\right\}\\
%\stackrel{n\to\infty}\longrightarrow\ &\Prob\left\{Z_{\infty}^{(1)}+\ldots+Z_{\infty}^{(k_{j})} \le  x_{j},\;1\le j\le m\right\}\\
%=~&\Prob\{N'(x_{j}) \ge  k_{j},\;1\le j\le m\},
%\end{align*}
%where the convergence follows from Theorem \ref{main1_for_RW} and the absolute continuity of the $Z_{\infty}^{(j)}$, see \cite[Corollary 4 on %p.~36]{Athreya+Ney:72} and use \eqref{eq:x_log_x}.\qed
%\end{proof}

\begin{proof}[of Theorem \ref{main1_for_RW_T}]
For this result, it suffices to note
\begin{align*}
\Prob\left\{T(\lfloor\mu^{n} x\rfloor)-n\le k\right\}\ &=\ \Prob\left\{S_{1}^{(n+k)}>\lfloor\mu^{n}x\rfloor\right\}\\
&=\Prob\left\{S_{1}^{(n+k)}>\mu^{n} x\right\}\ =\ \Prob\left\{\mu^{-(n+k)}S_{1}^{(n+k)}>\mu^{-k} x\right\}.\quad\qed
\end{align*}
\end{proof}

\begin{proof}[of Theorem \ref{fixed_point_solutions_fin_mean}]
Let $(X^{(0)}_{1},X^{(0)}_{2},\ldots)$ be a solution to \eqref{fixed_point_finite_mean}, i.e.
$$
\big(\mu X^{(0)}_{1},\mu X^{(0)}_{2},\ldots\big)\ \eqdist\ \left(X^{(0)}_{R(1)},X^{(0)}_{R(2)},\ldots\right).
$$
By the Kolmogorov consistency theorem, the underlying probability space $(\Omega,\fA,\Prob)$ may be assumed to be large enough to carry the following objects:
\begin{itemize}\itemsep2pt
\item the random sequence $(X^{(0)}_{1},X^{(0)}_{2},\ldots)$;
\item a two-sided sequence $(R^{(n)}(\cdot))_{n\in\Z}$ of independent copies of the random walk $R(\cdot)$ and the corresponding sequence of random maps \eqref{eq:psi_seq_mapping_definition};
\item a two-sided stationary sequence $(V_{k})_{k\in\Z}=\big((X_{1}^{(k)},X_{2}^{(k)},\ldots)\big)_{k\in\Z}$ such that $V_{k}$ is independent of $(\psi_{n})_{n\le k}$ for each $k\in\Z$, and
$$ V_{0}\ :=\ \big(X^{(0)}_{1},X^{(0)}_{2},\ldots\big)\quad\text{and}\quad V_{k}\ =\ \psi_{k+1}(V_{k+1}),\quad k\in\Z, $$
thus
$$ \big(X^{(k)}_{1},X^{(k)}_{2},\ldots\big)\ =\ \frac{1}{\mu}\left(X^{(k+1)}_{R^{(k+1)}(1)},X^{(k+1)}_{R^{(k+1)}(2)},\ldots\right),\quad k\in\Z. $$
\end{itemize}
By construction, $V_{k}\eqdist V_{0}$ for $k\in\Z$. Define a sequence of random measures
$(\nu_{n})_{n\ge 0}$ on $[0,\infty)$ by\,\footnote{For ease of notation, we write $\nu_{n}[0,t]$ instead of $\nu_{n}([0,t])$.}
$$
\nu_{n}[0,t]\ :=\
\begin{cases}
\hfill 0, &\text{if }t<\mu^{-n},\\
\mu^{-n}X^{(n)}_{\lfloor t\mu^{n}\rfloor},&\text{if } t\ge \mu^{-n},
\end{cases}
$$
which is possible because $(X^{(0)}_{j})_{j\ge 1}$ is nondecreasing and $(X^{(n)}_{j})_{j\ge 1}\eqdist (X^{(0)}_{j})_{j\ge 1}$ for each $n\in\N_{0}$. Let us assume for a moment that $\nu_{n}$ converges almost surely, as $n\to\infty$, to some limit random measure $\nu_{\infty}$
in the vague topology on $[0,\infty)$, i.e.
\begin{equation}\label{eq:vague_as_convergence}
\nu_{n}\ \vag\ \nu_{\infty}\quad\text{a.s.}
\end{equation}
Let us show that $(X_{k}^{(0)})_{k\ge 1}\eqdist(G(Z_{\infty}^{(1)}),G(Z_{\infty}^{(1)}+Z_{\infty}^{(2)}),\ldots)$ with $G(t):=\nu_{\infty}[0,t]$. Indeed,
\begin{align*}
\left(X^{(0)}_{1},X^{(0)}_{2},\ldots\right)\ &=\ \mu^{-n}\Big(X^{(n)}_{\lfloor\mu^{n} \wh{Z}_{n}(1)\rfloor},X^{(n)}_{\lfloor\mu^{n} \wh{Z}_{n}(2)\rfloor},\ldots\Big)\\
&=\Big(\nu_{n}[0,\wh{Z}_{n}(1)],\nu_{n}[0,\wh{Z}_{n}(2)],\ldots\Big),\quad n\in\N,
\end{align*}
where $\wh{Z}_{n}(j):=\mu^{-n}(R^{(n)}\circ\cdots\circ R^{(1)})(j)$ is independent of $\nu_{n}$. As already pointed out,
\begin{equation}\label{eq:GW_conv1}
\Big(\wh{Z}_{n}(j)\Big)_{j\ge 1}\ \stackrel{n\to\infty}{\longrightarrow}\ \Big(Z_{\infty}^{(1)}+\ldots+Z_{\infty}^{(j)}\Big)_{j\ge 1}\quad\text{a.s.}
\end{equation}
which in combination with \eqref{eq:vague_as_convergence} implies
$$
\Big(\nu_{n},\Big(\wh{Z}_{n}(j)\Big)_{j\in\N}\Big)\ \stackrel{n\to\infty}{\longrightarrow}\ \Big(\nu_{\infty},\Big(Z_{\infty}^{(1)}+\ldots+Z_{\infty}^{(j)}\Big)_{j\in\N}\Big)\quad\text{a.s.}
$$
in the product topology, the components of the limit vector on the right-hand side being independent. Condition \eqref{eq:x_log_x} entails that the law of $Z_{\infty}^{(1)}$ is absolutely continuous, see \cite[Corollary 4 on p.~36]{Athreya+Ney:72}, whence
$$
\Prob\left\{\nu_{\infty}(\{Z_{\infty}^{(1)}+\ldots+Z_{\infty}^{(j)}\})=0\right\}\ =\ 1
$$
for all $j\in\N$. Lemma \ref{lemma_continuity} in the Appendix now yields
\begin{align*}
\big(X^{(0)}_{1},X^{(0)}_{2},\ldots\big)\ =~&\left(\nu_{n}[0,\,\wh{Z}_{n}(1)],\nu_{n}[0,\,\wh{Z}_{n}(2)],\ldots\right)\\
\stackrel{n\to\infty}{\longrightarrow}~&\left(\nu_{\infty}[0,\,Z_{\infty}^{(1)}],\nu_{\infty}[0,\,Z_{\infty}^{(1)}+Z_{\infty}^{(2)}],\ldots\right) \quad \text{a.s.},
\end{align*}
which shows the asserted representation of $\big(X^{(0)}_{1},X^{(0)}_{2},\ldots\big)$ as a solution to \eqref{fixed_point_finite_mean}.

\vspace{.1cm}
It remains to prove \eqref{eq:vague_as_convergence}. Recall that $g_{n}(t):=\mu^{-n}R^{(n)}(\lfloor t\mu^{n-1}\rfloor)$, $n\in\N$, $t\ge 0$, are the random maps introduced in Proposition \ref{Prop:GW_summary}. We have that
\begin{equation}\label{eq:nu_recursion}
\nu_{n}[0,t]\ =\ \nu_{n+1}[0,g_{n+1}(t)]
\end{equation}
for $n\in\N_{0}$ and $t\ge 0$, and this shows that $\nu_{n+1}$ differs from $\nu_{n}$ by a random perturbation of time. But the latter is negligible for large $n$ by the strong law of large numbers, viz.
$$
g_{n+1}(t)\ \stackrel{n\to\infty}{\longrightarrow}\ t\quad\text{a.s.},
$$
cf. Lemma \ref{summable_lemma}. For arbitrary fixed $k\in\N_{0}$, iteration of \eqref{eq:nu_recursion} provides us with
\begin{equation}\label{eq:nu_{k}_recursion}
\nu_{k}[0,t]\ =\ \nu_{n+k}[0,g_{n+k}\circ\cdots\circ g_{k+1}(t)],
\end{equation}
in particular
\begin{equation}\label{eq:nu_{0}_recursion}
\nu_{0}[0,\,t]\ =\ \nu_{n}[0,\wh{Z}_{n}(\lfloor t\rfloor)]
\end{equation}
for $n\in\N$ and $t\ge 0$. Since
$$
\lim_{t\to\infty}\lim_{n\to\infty}\wh{Z}_{n}(\lfloor t\rfloor)\ =\ \lim_{t\to\infty}Z_{\infty}(t)\ =\ \infty,
$$
equation \eqref{eq:nu_{0}_recursion} implies that, for each $T>0$,
\begin{equation}\label{eq:sup_{n}u_finite}
\sup_{n\ge 0}\,\nu_{n}[0,T]\ <\ \infty\quad\text{a.s.}
\end{equation}
Hence, $(\nu_{n})_{n\ge 0}$ is a.s. relatively compact in the vague topology (see 15.7.5 in \cite{Kallenberg:83}). Let $(\nu_{m_{n}})_{n\ge 1}$, where $(m_{n})_{n\in\N}$ is random, be an a.s. vaguely convergent subsequence and $\nu_{\infty}'$ its limit. From \eqref{eq:nu_{k}_recursion}, we have for every fixed $k\in\N_{0}$ and $m_{n}>k$ that
\begin{equation}\label{proof_m_{n}}
\nu_{k}[0,t]\ =\ \nu_{m_{n}}[0,g_{m_{n}}\circ\cdots\circ g_{k+1}(t)], \quad t\ge 0.
\end{equation}
By part (A2) of Proposition \ref{Prop:GW_summary}
$$
g_{m_{n}}\circ\cdots\circ g_{k+1}(t)\ \stackrel{n\to\infty}{\longrightarrow}\ \mu^{-k}Z_{k,\infty}(t\mu^{k})\quad\text{a.s.}
$$
in the space $D$ endowed with the $J_1$-topology. Sending $n\to\infty$ in \eqref{proof_m_{n}} and applying Lemma \ref{lemma_continuity}, we obtain that a.s.\
\begin{equation}\label{eq:nu_infty_well_defined}
\nu_{k}[0,t]\ =\ \nu_{\infty}'[0,\mu^{-k}Z_{k,\infty}(t\mu^{k})],\quad t\ge 0
\end{equation}
for every fixed $k\in\N_{0}$. By part (A5) of Proposition \ref{Prop:GW_summary}, the random set
$$ \cS\ :=\ \left\{\mu^{-k}Z_{k,\infty}(t\mu^{k}):t\ge 0,k\in\N_{0}\right\} $$
is a.s. dense in $[0,\infty)$. If $\nu_{\infty}''$ is another subsequential limit of $(\nu_{n})_{n\in\N_{0}}$, then
$$
\nu_{\infty}''[0,t]\ =\ \nu_{\infty}'[0,t]
$$
for all $t\in\cS$, and therefore $\nu''_{\infty}=\nu'_{\infty}$ a.s., proving \eqref{eq:vague_as_convergence}.

\vspace{.1cm}
It remains to show that the random process $G$ satisfies the restricted self-similarity property \eqref{G_selfsimilar}. But this follows immediately from 
$$
\nu_n[0,\mu t]\ =\ \mu^{-n}X^{(n)}_{\lfloor t\mu^{n+1}\rfloor}\ \eqdist\ \mu^{-n}X^{(n+1)}_{\lfloor t\mu^{n+1}\rfloor}\ =\ \mu\nu_{n+1}[0,t],\quad t\ge\mu^{-(n+1)},
$$
where the equality in law is a consequence of the stationarity of $(V_k)_{k\in\Z}$.\qed
\end{proof}

\section{Proofs in the infinite-mean case}\label{sec:infinite mean}

\subsection{Some auxiliary results about infinite-mean Galton-Watson processes}	
\begin{Lemma}\label{moments_bound_inf_mean}
Let $\theta$ be an $\N$-valued random variable satisfying Davies' assumption, viz.
$$
x^{-\alpha-\gamma(x)}\le \Prob\{\theta>x\}\le x^{-\alpha+\gamma(x)},\quad x\ge x_{0},
$$
for some $x_{0}>0$, $\alpha\in(0,1)$, and a nonincreasing nonnegative function $\gamma(x)$ such that $x^{\gamma(x)}$ is nondecreasing and
$\int_{x_{0}}^{\infty}\gamma(\exp(e^{x}))\,dx<\infty$. Let $(\theta_{n})_{n\in\N}$ be a sequence of independent copies of $\theta$, and put
$$
R(0):=0,\quad R(n):=\theta_{1}+\ldots+\theta_{n},\quad M(n):=\max_{k=1,\ldots,n} \theta_{k},\quad n\in\N.
$$
Then, for each $\varepsilon>0$ and each $\beta\in(0,\alpha)$, there exist $c,C>0$ such that, for all $n\in\N$,
\begin{equation}\label{eq:moments_bound_inf_mean}
cn^{-(1/\alpha+\varepsilon)\beta\alpha^{-1}\gamma(n^{1/\alpha+\varepsilon})}\ \le\ \Erw\left(\frac{ R(n)}{n^{1/\alpha}}\right)^{\beta}\ \le\ Cn^{(1/\alpha+\varepsilon)\beta\alpha^{-1}\gamma(n^{1/\alpha+\varepsilon})},
\end{equation}
and
\begin{equation}\label{eq:moments_bound_inf_mean2}
cn^{-(1/\alpha+\varepsilon)\beta\alpha^{-1}\gamma(n^{1/\alpha+\varepsilon})}\ \le\ \Erw\left(\frac{M(n)}{n^{1/\alpha}}\right)^{-\beta}\ \le\ Cn^{(1/\alpha+\varepsilon)\beta\alpha^{-1}\gamma(n^{1/\alpha+\varepsilon})}.
\end{equation}
\end{Lemma}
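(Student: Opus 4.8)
The plan is to show that both $R(n)$ and $M(n)$ concentrate at the \emph{quantile scale} $b_{n}$, defined up to constants by $n\,\Prob\{\theta>b_{n}\}\asymp 1$, and that
$$\Erw R(n)^{\beta}\ \asymp\ b_{n}^{\beta},\qquad \Erw M(n)^{-\beta}\ \asymp\ b_{n}^{-\beta},$$
after which both displays follow by estimating the ratio $b_{n}/n^{1/\alpha}$. Indeed, since the right-hand sides of \eqref{eq:moments_bound_inf_mean} and \eqref{eq:moments_bound_inf_mean2} coincide, the two estimates are dual incarnations of the single inequality $(b_{n}/n^{1/\alpha})^{\pm\beta}\le C\, n^{\pm(1/\alpha+\varepsilon)\beta\alpha^{-1}\gamma(n^{1/\alpha+\varepsilon})}$. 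Throughout I abbreviate $\Gamma(x):=\gamma(x)\log x$, so that $x^{\gamma(x)}=e^{\Gamma(x)}$ is nondecreasing while $\gamma$ is nonincreasing; these two monotonicities, together with Davies' two-sided tail bounds, are the only structural inputs.

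The maximum is the cleaner of the two because its law is explicit. I would write
$$\Erw M(n)^{-\beta}\ =\ \beta\int_{1}^{\infty}x^{-\beta-1}\Prob\{\theta\le x\}^{n}\,dx,$$
and sandwich $\Prob\{\theta\le x\}^{n}=(1-\Prob\{\theta>x\})^{n}$ between $e^{-n\Prob\{\theta>x\}}$ from above and $(1-n\Prob\{\theta>x\})_{+}$ from below, inserting $x^{-\alpha-\gamma(x)}\le\Prob\{\theta>x\}\le x^{-\alpha+\gamma(x)}$. After the substitution $x=n^{1/\alpha}u$ the integrand is $\exp(-e^{-\Gamma(n^{1/\alpha}u)}u^{-\alpha\mp\gamma})$ up to the prefactor $n^{-\beta/\alpha}u^{-\beta-1}$; it is governed by the transition $u_{*}\asymp e^{-\Gamma(n^{1/\alpha})/\alpha}$, and integrating $u^{-\beta-1}$ over $u\ge u_{*}$ produces $u_{*}^{-\beta}=e^{(\beta/\alpha)\Gamma(n^{1/\alpha})}$. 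The only care needed is that $\gamma(n^{1/\alpha}u)$ varies with $u$; one controls it on the relevant range by monotonicity of $\gamma$ and of $x^{\gamma(x)}$, which is exactly where evaluating $\gamma$ at the slightly larger point $n^{1/\alpha+\varepsilon}$ buys the required slack. The matching lower bound uses $\Erw M(n)^{-\beta}\ge b_{n}^{-\beta}\,\Prob\{M(n)\le b_{n}\}$ and $\Prob\{M(n)\le b_{n}\}=\Prob\{\theta\le b_{n}\}^{n}\ge c>0$.

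For the sum I would argue by truncation at the two levels $b_{n}$ and $t:=n^{1/\alpha+\varepsilon}$. Splitting each $\theta_{k}$ accordingly and using $(x+y)^{\beta}\le x^{\beta}+y^{\beta}$ (valid since $\beta<1$), the small part $\sum_{k}\theta_{k}\mathbf 1_{\{\theta_{k}\le b_{n}\}}$ is bounded via Jensen's inequality through its mean, $(n\,\Erw[\theta\mathbf 1_{\{\theta\le b_{n}\}}])^{\beta}\asymp b_{n}^{\beta}$; the moderate part is bounded by $n\,\Erw[\theta^{\beta}\mathbf 1_{\{b_{n}<\theta\le t\}}]$, an integral that concentrates at the lower cut-off and is again $\asymp b_{n}^{\beta}$; and the large part is negligible because $\Prob\{\max_{k\le n}\theta_{k}>t\}\le n\,\Prob\{\theta>t\}=n^{-\varepsilon\alpha+o(1)}\to0$, so that its $\beta$-th moment, at most $n\,\Erw[\theta^{\beta}\mathbf 1_{\{\theta>t\}}]=o(b_{n}^{\beta})$, drops out. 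Here it is essential to treat the moderate range by subadditivity rather than Jensen: because $\theta$ has infinite mean, the truncated first moment $\Erw[\theta\mathbf 1_{\{b_{n}<\theta\le t\}}]$ is dominated by the upper cut-off $t$ and would ruin the bound, whereas the $\beta$-th-moment integral localizes at $b_{n}$. The lower bound is immediate from $R(n)\ge M(n)\ge b_{n}\mathbf 1_{\{M(n)>b_{n}\}}$ and $\Prob\{M(n)>b_{n}\}\ge c>0$.

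It remains to convert $b_{n}^{\pm\beta}$ into the stated powers of $n$. From $n\,\Prob\{\theta>b_{n}\}\asymp1$ and Davies' bounds one gets $(\alpha\mp\gamma(b_{n}))\log b_{n}=\log n+O(1)$, hence $\log(b_{n}/n^{1/\alpha})=\pm\alpha^{-1}\Gamma(b_{n})(1+o(1))$, the error being of order $\gamma(b_{n})\,\Gamma(b_{n})$. Since $b_{n}\le n^{1/\alpha+\varepsilon}$ for large $n$, monotonicity of $\Gamma$ gives $\Gamma(b_{n})\le\Gamma(n^{1/\alpha+\varepsilon})=(1/\alpha+\varepsilon)\gamma(n^{1/\alpha+\varepsilon})\log n$, which is precisely the exponent in the statement; and the extra $\varepsilon\beta\Gamma$ of room created by evaluating $\gamma$ at $n^{1/\alpha+\varepsilon}$ rather than at $n^{1/\alpha}$ absorbs the $o(1)$ correction, because $\gamma(b_{n})\to0$. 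I expect the main obstacle to be exactly this bookkeeping of the slowly varying factor $\gamma$: keeping the moderate jumps of the sum under control without the infinite mean intervening, and checking that each correction term, a multiple of $\gamma(b_{n})\,\Gamma(b_{n})$, is dominated by the slack the $\varepsilon$ provides.
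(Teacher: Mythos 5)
Your reduction of both displays to a single quantile scale $b_{n}$ is where the argument breaks, and the failure is in the sum, not in the bookkeeping of $\gamma$ that you flagged as the main obstacle. The claim $\Erw R(n)^{\beta}\asymp b_{n}^{\beta}$ — and in particular the Karamata-type step $n\,\Erw\big[\theta\1_{\{\theta\le b_{n}\}}\big]\asymp b_{n}$ that you use for the small part — is genuinely false under Davies' condition, because that condition does not imply regular variation: the tail may oscillate between the envelopes $x^{-\alpha-\gamma(x)}$ and $x^{-\alpha+\gamma(x)}$, whose ratio $x^{2\gamma(x)}=e^{2\Gamma(x)}$ is unbounded (e.g.\ $\gamma(x)=(\log x)^{-1/2}$ is admissible and gives $\Gamma(x)=\sqrt{\log x}\to\infty$). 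Concretely, let the tail follow the upper envelope on $[x_{0},B)$ and jump down to the lower envelope at $B$, repeating this pattern along a sparse sequence of points $B=B_{j}$; for $n_{j}\approx B_{j}^{\alpha}e^{\Gamma(B_{j})}$ one has $b_{n_{j}}=B_{j}$, while
$$
n_{j}\,\Erw\big[\theta\1_{\{\theta\le b_{n_{j}}\}}\big]\ \ge\ c\,b_{n_{j}}e^{c'\Gamma(b_{n_{j}})},
$$
and since the truncated sum concentrates around its mean (its standard deviation is $o$ of its mean here), also $\Erw R(n_{j})^{\beta}\ge c\,b_{n_{j}}^{\beta}e^{c'\beta\Gamma(b_{n_{j}})}$ with $c'\ge 2$ achievable. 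The same oscillation defeats the moderate-part estimate $n\Erw\big[\theta^{\beta}\1_{\{b_{n}<\theta\le t\}}\big]\asymp b_{n}^{\beta}$ and the framing claim $\Erw M(n)^{-\beta}\asymp b_{n}^{-\beta}$. Crucially, these parasitic factors cannot be absorbed by your $\varepsilon$-slack: evaluating $\gamma$ at $n^{1/\alpha+\varepsilon}$ rather than $n^{1/\alpha}$ only enlarges the admissible exponent by about $\varepsilon\beta\Gamma$, whereas the losses above are of size $2\beta\Gamma$ or larger, so for small $\varepsilon$ your chain of inequalities does not produce \eqref{eq:moments_bound_inf_mean}. (The lemma itself survives because its two-sided bounds leave room of order $e^{\pm\beta\alpha^{-1}\Gamma}$ precisely so that the moments need not sit at any one scale; for the same reason your opening remark that the two displays are dual incarnations of a single inequality about $b_{n}/n^{1/\alpha}$ is incorrect.)

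The repair — and this is what the paper does — is to avoid the quantile of $\theta$ altogether and sandwich $\theta$ stochastically between two random variables with \emph{genuinely} regularly varying tails, $\Prob\{\overline{\theta}>x\}:=\sup_{y>x}y^{-\alpha+\gamma(y)}$ and $\Prob\{\underline{\theta}>x\}:=\inf_{x_{1}\le y\le x}y^{-\alpha-\gamma(y)}$; these are regularly varying of index $-\alpha$ by the slow variation of $x^{\gamma(x)}$ (\cite{Davies:78}, Lemma 3, and Theorem 1.5.3 in \cite{BingGolTeug:89}), and $\underline{\theta}\le_{st}\theta\le_{st}\overline{\theta}$. Classical moment-convergence results then give $\Erw\underline{R}(n)^{\beta}\asymp\underline{c}(n)^{\beta}$ and $\Erw\overline{R}(n)^{\beta}\asymp\overline{c}(n)^{\beta}$ for sums \cite{Ibragimov+Linnik:71}, and the analogous statements for maxima via Fisher--Tippett--Gnedenko and \cite{Pickands:68} — note the two \emph{different} quantile scales $\underline{c}(n)\le\overline{c}(n)$, whose ratio is exactly the unbounded wobble factor your single-scale approach suppresses. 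Only the final step compares $\underline{c}(n)$ and $\overline{c}(n)$ with $n^{1/\alpha}$, and that step is the one part of your proposal which is correct and is indeed the paper's key trick: $c(n)\le n^{1/\alpha+\varepsilon}$ together with monotonicity of $\Gamma(x)=\gamma(x)\log x$ gives $\Gamma(c(n))\le(1/\alpha+\varepsilon)\gamma(n^{1/\alpha+\varepsilon})\log n$, which is the exponent in \eqref{eq:moments_bound_inf_mean}. Your Laplace-type analysis of the maximum could also be completed as written, because there each one-sided bound uses one envelope consistently and never passes through $b_{n}$; but the sum requires the domination device (or at least two separate envelope scales) rather than truncation at $b_{n}$.
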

\begin{proof}
If $\theta$ belongs to the domain of attraction of an $\alpha$-stable law, $\alpha\in(0,1)$, equivalently if $x\mapsto \Prob\{\theta>x\}$ is regularly varying at infinity with index $\alpha$, then the $\beta$-th moment of $R(n)/c(n)$, where $c(\cdot)$ is such that $\lim_{n\to\infty}n\,\Prob\{\theta>c(n)\}=1$, converges to the $\beta$-th moment of the limit $\alpha$-stable law for all $\beta\in (0,\alpha)$. Unfortunately, Davies' condition does not imply that $\theta$ is in the domain of attraction of an $\alpha$-stable law. Yet, in some sense $\theta$ can be bounded from below and above by random variables with regularly varying tails, which is the idea employed in the following argument.

We first recall, see \cite[Lemma 3]{Davies:78}, that the function $x\mapsto x^{\gamma(x)}$ is slowly varying at infinity. Pick $x_{1}>x_{0}$ so large that $x^{-\alpha+\gamma(x)}\le 1$ for $x\ge x_{1}$ and let $\overline{\theta}\ge 1$ and $\underline{\theta}\ge 1$ be random variables
with distributions
\begin{align*}
&\Prob\{\overline{\theta}>x\}\ :=\
\begin{cases}
\hfill 1, &\text{if }1\le x < x_{1},\\
\sup_{y>x}y^{-\alpha+\gamma(y)},&\text{if }x\ge x_{1};
\end{cases}
\shortintertext{and}
&\Prob\{\underline{\theta}>x\}\ :=\
\begin{cases}
\hfill \Prob\{\theta>x\}, &\text{if }1\le x< x_{1},\\
\inf_{x_1\leq y\leq x}y^{-\alpha-\gamma(y)},&\text{if }x\ge x_{1}.
\end{cases}
\end{align*}
It is clear from the construction that, with $\le_{st}$ denoting stochastic majorization,
$$
\underline{\theta}\,\le_{st}\,\theta\,\le_{st}\,\overline{\theta}.
$$
Moreover, by Theorem 1.5.3 in \cite{BingGolTeug:89}, both $x\mapsto \Prob\{\overline{\theta}>x\}$ and $x\mapsto \Prob\{\underline{\theta}>x\}$ are regularly varying at infinity of order $-\alpha$ and therefore belong to the domain of attraction of an $\alpha$-stable law.

\vspace{.1cm}
Let $(\overline{\theta}_{k})_{k\ge 1}$ and $(\underline{\theta}_{k})_{k\ge 1}$ be sequences of independent copies of $\overline{\theta}$ and $\underline{\theta}$, respectively, with associated zero-delayed random walks $(\ovl{R}(k))_{k\ge 0}$ and $(\uline{R}(k))_{k\ge 0}$. Further, let $(\overline{c}(n))_{n\in\N}$ and $(\underline{c}(n))_{n\in\N}$ be such that
$$
\lim_{n\to\infty}n\,\Prob\{\underline{\theta}>\underline{c}(n)\}\ =\ \lim_{n\to\infty}n\,\Prob\{\overline{\theta}>\overline{c}(n)\}\ =\ 1.
$$
From Lemma 5.2.2 in \cite{Ibragimov+Linnik:71}, we infer
$$
0\ <\ \lim_{n\to\infty}\frac{\Erw\underline{R}(n)^{\beta}}{\underline{c}^{\beta}(n)}\ <\ \infty\quad\text{and}\quad 0\ <\ \lim_{n\to\infty}\frac{\Erw\overline{R}(n)^{\beta}}{\overline{c}^{\beta}(n)}\ <\ \infty,
$$
and since
$$
\Erw\underline{R}(n)^{\beta}\ \le\ \Erw R(n)^{\beta}\ \le\ \Erw\overline{R}(n)^{\beta},
$$
relation \eqref{eq:moments_bound_inf_mean} follows if we can show that, for some $c_{1},C_{1}>0$ and all $n\in\N$,
\begin{equation}\label{eq:moments_bound_aux1}
c_{1} n^{-(1/\alpha+\varepsilon)\gamma(n^{1/\alpha+\varepsilon})/\alpha}\ \le\  \frac{\underline{c}(n)}{n^{1/\alpha}}\quad\text{and}\quad \frac{\overline{c}(n)}{n^{1/\alpha}}\ \le\ C_{1} n^{(1/\alpha+\varepsilon)\gamma(n^{1/\alpha+\varepsilon})/\alpha}.
\end{equation}
We prove only the first inequality in \eqref{eq:moments_bound_inf_mean}, for the second one follows in a similar manner. It is known that $(\underline{c}(n))$ is regularly varying with index $1/\alpha$. Hence, for large enough $n$, we have $\underline{c}(n)\le n^{1/\alpha+\varepsilon}$. On the other hand, using the monotonicity of $x\mapsto x^{\gamma(x)}$, we have
$$
n\,\Prob\{\underline{\theta}>\underline{c}(n)\}\ \simeq\ n\,\underline{c}(n)^{-\alpha-\gamma(\underline{c}(n))}\ \ge\ (n^{-1/\alpha}\underline{c}(n))^{-\alpha}(n^{1/\alpha+\varepsilon})^{-\gamma(n^{1/\alpha+\varepsilon})},
$$
and therefore
$$
\limsup_{n\to\infty}\,(n^{-1/\alpha}\underline{c}(n))^{-\alpha}(n^{1/\alpha+\varepsilon})^{-\gamma(n^{1/\alpha+\varepsilon})}\ \le\ 1,
$$
yielding
$$
c_{1} n^{-(1/\alpha+\varepsilon)\gamma(n^{1/\alpha+\varepsilon})/\alpha}\ \le\  \frac{\underline{c}(n)}{n^{1/\alpha}}
$$
for some $c_{1}>0$.

\vspace{.1cm}
To show \eqref{eq:moments_bound_inf_mean2}, we argue in a similar manner. Set
$$
\overline{M}(n):=\max_{1\le k\le n}(-\overline{\theta}^{-1}_{k}),\quad\underline{M}(n):=\max_{1\le k\le n}(-\underline{\theta}_{k}^{-1}),\quad n\in\N.
$$
The Fisher-Tippett-Gnedenko theorem implies that $(-\overline{c}(n)\overline{M}(n))$ and $(-\underline{c}(n)\underline{M}(n))$ both converge weakly to a Weibull distribution. Moreover, Theorem 2.1 in \cite{Pickands:68} ensures that the moments of order $\beta$ also converge, so
$$
0\ <\ \lim_{n\to\infty}\Erw\Big(-\overline{c}(n)\overline{M}(n)\Big)^{\beta}\ <\ \infty\quad\text{and}\quad 0\ <\ \lim_{n\to\infty}\Erw\Big(-\underline{c}(n)\underline{M}(n)\Big)^{\beta}\ <\ \infty.
$$
On the other hand,
\begin{align*}
\Erw\Big(-\underline{c}(n)\underline{M}(n)\Big)^{\beta}\ &=\ \Erw\Big(\underline{c}(n)\min_{1\le k\le n}\underline{\theta}_{k}^{-1}\Big)^{\beta}\\
&=\ \Erw\left(\frac{\underline{c}(n)}{\max_{1\le k\le n}\underline{\theta}_{k}}\right)^{\beta}\ \ge\ \Erw\left( \frac{\underline{c}(n)}{\max_{1\le k\le n}\theta_{k}}\right)^{\beta}.
\end{align*}
and
\begin{align*}
\Erw\Big(-\overline{c}(n)\overline{M}(n)\Big)^{\beta}\ &=\ \Erw\Big(\overline{c}(n)\min_{1\le k\le n}\overline{\theta}_{k}^{-1}\Big)^{\beta}\\
&=\ \Erw\left(\frac{\overline{c}(n)}{\max_{1\le k\le n}\overline{\theta}_{k}}\right)^{\beta}\ \le\ \Erw\left(\frac{\overline{c}(n)}{\max_{1\le k\le n}\theta_{k}}\right)^{\beta}.
\end{align*}
Combining this with \eqref{eq:moments_bound_aux1}, we obtain \eqref{eq:moments_bound_inf_mean2}.\qed
\end{proof}

The next result is the counterpart of Lemma \ref{summable_lemma} in the infinite-mean case.

\begin{Lemma}\label{summable_lemma_inf_mean}
Let $(\theta^{(n)}_{k})_{n\in\N,\,k\in\N}$ be an array of independent copies of a positive random variable $\theta$ which satisfies the assumptions of Lemma \ref{moments_bound_inf_mean}. For each $n\in\N$, define the increasing random walk
$$
R^{(n)}(0):=0,\quad R^{(n)}(k):=\theta^{(n)}_{1}+\ldots+\theta^{(n)}_{k},\quad k\in\N.
$$
Then, for arbitrary $T>0$,
$$
\sum_{n=1}^{\infty}\sup_{t\in[0,\,T]}\Big|\alpha^n\log R^{(n)}(\lfloor  e^{t\alpha^{-(n-1)}}\rfloor)-t\Big|<\infty\quad\text{a.s.}
$$
\end{Lemma}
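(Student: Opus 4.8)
The plan is to follow the architecture of the proof of Lemma~\ref{summable_lemma}, but with the martingale estimates there replaced by the moment bounds of Lemma~\ref{moments_bound_inf_mean}. First I would discretise the supremum. For $t$ with $\lfloor e^{t\alpha^{-(n-1)}}\rfloor=j$, i.e. $t\in[\alpha^{n-1}\log j,\alpha^{n-1}\log(j+1))$, the quantity $\alpha^n\log R^{(n)}(\lfloor e^{t\alpha^{-(n-1)}}\rfloor)$ equals the constant $\alpha^n\log R^{(n)}(j)$, while $t$ varies over an interval of length at most $\alpha^{n-1}\log(1+1/j)\le\alpha^{n-1}$. Since
$$
\alpha^n\log R^{(n)}(j)-\alpha^{n-1}\log j\ =\ \alpha^n\log\frac{R^{(n)}(j)}{j^{1/\alpha}},
$$
this reduces the claim, upon writing $J_n:=\lfloor e^{T\alpha^{-(n-1)}}\rfloor$, to the bound
$$
\sup_{t\in[0,T]}\Big|\alpha^n\log R^{(n)}(\lfloor e^{t\alpha^{-(n-1)}}\rfloor)-t\Big|\ \le\ \alpha^n\max_{1\le j\le J_n}\Big|\log\frac{R^{(n)}(j)}{j^{1/\alpha}}\Big|+\alpha^{n-1},
$$
in which the geometric remainder $\sum_n\alpha^{n-1}$ is harmless. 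Everything thus comes down to proving $\sum_n\alpha^n\max_{1\le j\le J_n}|\log(R^{(n)}(j)/j^{1/\alpha})|<\infty$ almost surely.

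The decisive point is that $J_n$ is of order $e^{T\alpha^{-(n-1)}}$, so a crude union bound over all $j\le J_n$ would cost a factor $\log J_n\asymp\alpha^{-(n-1)}$, producing the non-summable contribution $\alpha^n\log J_n\asymp\alpha T$. To avoid this I would pass to a dyadic skeleton, exploiting monotonicity of the walk: for $2^m\le j<2^{m+1}$ one has $R^{(n)}(2^m)\le R^{(n)}(j)\le R^{(n)}(2^{m+1})$, hence
$$
\Big|\log\frac{R^{(n)}(j)}{j^{1/\alpha}}\Big|\ \le\ \max_{i\in\{m,m+1\}}\Big|\log\frac{R^{(n)}(2^{i})}{(2^{i})^{1/\alpha}}\Big|+\frac{\log 2}{\alpha}.
$$
Writing $m_n:=\lceil\log_2 J_n\rceil$ and $B_n:=\max_{0\le m\le m_n}|\log(R^{(n)}(2^m)/(2^m)^{1/\alpha})|$, it therefore suffices to show $\sum_n\alpha^nB_n<\infty$ a.s. Now the maximum defining $B_n$ runs over only $m_n+1\asymp\alpha^{-(n-1)}$ points, so a union bound will cost merely $\log m_n\asymp n$, and $\sum_n\alpha^n n<\infty$.

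To bound $\Erw B_n$ I would invoke Lemma~\ref{moments_bound_inf_mean} with the index $j$ in place of $n$. Since $M(j)\le R^{(n)}(j)$, both the positive moment of $R^{(n)}(j)/j^{1/\alpha}$ and, via the maximum, its negative moment are bounded by $Cj^{\beta\kappa_j}$, where $\kappa_j:=(1/\alpha+\varepsilon)\alpha^{-1}\gamma(j^{1/\alpha+\varepsilon})$ and $\beta\in(0,\alpha)$ is fixed. Markov's inequality applied to both signs gives $\Prob\{|\log(R^{(n)}(j)/j^{1/\alpha})|>u\}\le 2Ce^{-\beta u}j^{\beta\kappa_j}$, and a union bound over the dyadic points followed by integration in $u$ yields
$$
\Erw B_n\ \le\ \frac{1}{\beta}\log\Big(2C\sum_{m=0}^{m_n}(2^m)^{\beta\kappa_{2^m}}\Big)+\frac{1}{\beta}.
$$
The substitution $y=j^{1/\alpha+\varepsilon}$ turns $\kappa_j\log j$ into $\alpha^{-1}\log(y^{\gamma(y)})$, which is nondecreasing since $y^{\gamma(y)}$ is; hence the summands increase in $m$ and the sum is at most $(m_n+1)$ times its last term. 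This gives $\alpha^n\Erw B_n\lesssim\alpha^n\log(m_n+1)+\alpha^n\kappa_{J_n}\log J_n+\alpha^n$, whose first and third terms are summable (being of order $\alpha^n n$ and $\alpha^n$). For the middle term a direct computation gives $\alpha^n\kappa_{J_n}\log J_n=(1/\alpha+\varepsilon)T\,\gamma(Y_n)$ with $Y_n=\exp(e^{x_n})$ and $x_n=(n-1)\log(1/\alpha)+\log((1/\alpha+\varepsilon)T)$. As $x\mapsto\gamma(\exp(e^x))$ is nonincreasing and the $x_n$ are in arithmetic progression, $\sum_n\gamma(Y_n)$ is controlled by $\int_{x_0}^{\infty}\gamma(\exp(e^x))\,dx<\infty$, precisely Davies' integrability hypothesis. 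Summing over $n$ and applying Tonelli's theorem gives $\Erw\sum_n\alpha^nB_n<\infty$, whence $\sum_n\alpha^nB_n<\infty$ a.s.

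I expect the main obstacle to be the dyadic reduction of the second paragraph: the window $[1,J_n]$ over which the maximum is taken is exponentially long, so a naive union bound is fatally lossy, and it is essential to use monotonicity of the random walk to replace the maximum over $J_n$ indices by one over $O(\log J_n)$ indices. A secondary delicate point, routine once the structure is in place, is verifying that the resulting series matches Davies' condition exactly through the identity $\alpha^n\kappa_{J_n}\log J_n=(1/\alpha+\varepsilon)T\,\gamma(\exp(e^{x_n}))$.
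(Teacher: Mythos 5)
Your proposal is correct and takes essentially the same route as the paper's own proof: the same reduction of the supremum over $t$ to $\alpha^n\max_{1\le k\le N_n}\big|\log\big(R^{(n)}(k)/k^{1/\alpha}\big)\big|$ plus a summable remainder, the same use of the walk's monotonicity to control this maximum through only $O(\alpha^{-n})$ geometric-scale indices, the same moment bounds from Lemma~\ref{moments_bound_inf_mean} (with negative moments handled via the maximum $M$), the same monotonicity of $x\mapsto x^{\gamma(x)}$ to bound the resulting sum by its largest term, and the same appeal to Davies' integrability condition followed by Tonelli. The only differences are cosmetic: the paper works with base-$e$ blocks inside the expectation via Jensen's inequality and splits $|\log|$ into $\log^{+}$ and $\log^{-}$, whereas you use a pathwise dyadic skeleton and a Markov/tail-integration bound, which is an equivalent Chernoff-type computation.
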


\begin{proof}
Put $m_{n}(t):=\lfloor  e^{t\alpha^{-(n-1)}}\rfloor$ and $N_{n}:=m_{n}(T)$. Then
$$
\alpha^{n-1}\log m_{n}(t)\ \le\ t\ <\ \alpha^{n-1}\log(m_{n}(t)+1)
$$
and therefore
\begin{align*}
&\hspace{-.5cm}\sup_{t\in[0,\,T]}\Big|\alpha^n\log R^{(n)}(\lfloor  e^{t\alpha^{-(n-1)}}\rfloor)-t\Big|\\
&\le\ \sup_{t\in[0,\,T]}\Big|\alpha^n\log R^{(n)}(m_{n}(t))-\alpha^n\log (m_{n}(t))^{1/\alpha}\Big|\ +\ \sup_{t\in[0,\,T]}\Big|\alpha^{n-1}\log m_{n}(t)-t\Big|\\
&\le\ \alpha^n\sup_{1\le k\le N_{n}}\left|\log \frac{R^{(n)}(k)}{k^{1/\alpha}}\right|\ +\ \alpha^{n-1}\log (1+(m_{n}(t))^{-1})\\
&\le\ \alpha^n\sup_{1\le k\le N_{n}}\left|\log\frac{R^{(n)}(k)}{k^{1/\alpha}}\right|\ +\ \alpha^{n-1}\log 2.
\end{align*}
Hence, we must show
\begin{equation*}%\label{eq:summable_lemma_inf_mean1}
\sum_{n=1}^{\infty}\alpha^n\sup_{1\le k\le N_{n}}\Big|\log \frac{R^{(n)}(k)}{k^{1/\alpha}}\Big|\ <\ \infty\quad\text{a.s.}
\end{equation*}
which amounts to checking the following two relations:
\begin{align}
&\sum_{n=1}^{\infty}\alpha^n\sup_{1\le k\le N_{n}}\Big(\log^{+} \frac{R^{(n)}(k)}{k^{1/\alpha}}\Big)\ <\ \infty\quad\text{a.s.}\label{eq1:summable_lemma_inf_mean2}
\shortintertext{and}
&\sum_{n=1}^{\infty}\alpha^n\sup_{1\le k\le N_{n}}\Big(\log^{-}\frac{R^{(n)}(k)}{k^{1/\alpha}}\Big)\ <\ \infty\quad\text{a.s.}\label{eq2:summable_lemma_inf_mean2}
\end{align}
Fixing $\beta\in(0,\alpha)$, we obtain
\begin{align*}
\beta\,\Erw\left(\sup_{1\le k\le N_{n}}\log^{+} \frac{R^{(n)}(k)}{k^{1/\alpha}}\right)\ &=\ \Erw\left(\log \sup_{1\le k\le N_{n}}\frac{(R^{(n)}(k))^{\beta}}{k^{\beta/\alpha}}\right)\\
&\le\ \Erw\left(\log \sum_{j=0}^{\lfloor \log N_{n}\rfloor }\sup_{k\in (e^{-(j+1)}N_{n},\,e^{-j} N_{n}]}\frac{(R^{(n)}(k))^{\beta}}{k^{\beta/\alpha}}\right)\\
&\le\ \log \sum_{j=0}^{\lfloor T\alpha^{-(n-1)}\rfloor}\Erw\left(\sup_{k\in (e^{-(j+1)}N_{n},\,e^{-j} N_{n}]}\frac{(R^{(n)}(k))^{\beta}}{k^{\beta/\alpha}}\right)\\
\end{align*}
where the last line follows from Jensen's inequality. For any $\varepsilon>0$, we further infer with the help of Lemma \ref{moments_bound_inf_mean} and \eqref{eq:moments_bound_inf_mean}
\begin{align*}
\Erw\left(\sup_{k\in (e^{-(j+1)}N_{n},\,e^{-j} N_{n}]}\frac{R^{(n)}(k)^{\beta}}{k^{\beta/\alpha}}\right)\ &\le\ C\,\Erw\left(\frac{R^{(n)}(\lceil e^{-j} N_{n}\rceil)}{\lceil e^{-j} N_{n}\rceil^{1/\alpha}}\right)^{\beta}\\
&\leq\ C\Big(\lceil e^{-j}N_{n}\rceil^{(1/\alpha+\varepsilon)\gamma(\lceil e^{-j} N_{n}\rceil^{1/\alpha+\varepsilon})}\Big)^{\beta/\alpha},
\end{align*}
where $C\in (0,\infty)$ denotes a suitable constant which here and hereafter may differ from line to line. By combining the previous estimates and using the monotonicity of $x\mapsto x^{\gamma(x)}$, we obtain
\begin{align*}
\beta\,\Erw\left(\sup_{1\le k\le N_{n}}\log\frac{R^{(n)}(k)}{k^{1/\alpha}}\right)\ &\le\ C\ +\ \log \sum_{j=0}^{\lceil T\alpha^{-(n-1)}\rceil}\Big(\lceil e^{-j}N_{n}\rceil^{(1/\alpha+\varepsilon)\gamma(\lceil e^{-j} N_{n}\rceil^{1/\alpha+\varepsilon})}\Big)^{\beta/\alpha}\\
&\le\ C\ +\ \log\left(\Big(\lceil T\alpha^{-(n-1)}\rceil+1\Big)\Big(N_{n}^{(1/\alpha+\varepsilon)\gamma(N_{n}^{1/\alpha+\varepsilon})}\Big)^{\beta/\alpha}\right).
\end{align*}
Consequently, \eqref{eq1:summable_lemma_inf_mean2} follows from the inequality
$$
\sum_{n=1}^{\infty}\alpha^n \log \Big(N_{n}^{(1/\alpha+\varepsilon)\gamma(N_{n}^{1/\alpha+\varepsilon})}\Big)\ \le\ C\sum_{n=1}^{\infty}\gamma(N_{n}^{1/\alpha+\varepsilon})
$$
and the fact that $\int_{x_{0}}^{\infty}\gamma(\exp(e^{x}))\,dx<\infty$ implies (see calculations on p.~473 in \cite{Davies:78})
$$
\sum_{n=1}^{\infty}\gamma(e^{u\alpha^{-n}})\ <\ \infty.
$$
for any $u>0$.

\vspace{.1cm}
Equation \eqref{eq2:summable_lemma_inf_mean2} is verified along similar lines. Using $\log^{-}x=\log(x^{-1}\wedge 1)\le\log (1+x^{-1})$, we obtain for arbitrary $\beta\in(0,\alpha)$
\begin{align*}
\beta\,\Erw\left(\sup_{1\le k\le N_{n}}\log^{-} \frac{R^{(n)}(k)}{k^{1/\alpha}}\right)\ &\le\ \beta\,\Erw\left(\log\Bigg(1+\sup_{1\le k\le N_{n}}\Bigg(\frac{k^{1/\alpha}}{R^{(n)}(k)}\Bigg)\Bigg)\right)\\
&\le\ \Erw\log\Bigg(1+\sum_{j=0}^{\lfloor \log N_{n}\rfloor} \sup_{k\in (e^{-(j+1)}N_{n},\,e^{-j} N_{n}]}\Bigg(\frac{k^{\beta/\alpha}}{R^{(n)}(k)^{\beta}}\Bigg)\Bigg)\\
&\le\ \log\Bigg(1+\sum_{j=0}^{\lfloor\log N_{n}\rfloor}\Erw\Bigg(\sup_{k\in (e^{-(j+1)}N_{n},\,e^{-j} N_{n}]}\Bigg(\frac{k^{\beta/\alpha}}{R^{(n)}(k)^{\beta}}\Bigg)\Bigg)\Bigg)\\
&\le\ \log\Bigg(1+\sum_{j=0}^{\lfloor \log N_{n}\rfloor}\Erw\Bigg(\frac{\lceil e^{-j} N_{n}\rceil^{\beta/\alpha}}{(R^{(n)}(\lceil e^{-(j+1)}N_{n}\rceil))^{\beta}}\Bigg)\Bigg)\\
&\le\ \log\Bigg(1+\sum_{j=0}^{\lfloor \log N_{n}\rfloor } \Erw\Bigg(\frac{\lceil e^{-j} N_{n}\rceil^{1/\alpha}}{\max_{k=1,\ldots,\lceil e^{-(j+1)}N_{n}\rceil}\theta^{(n)}_{k}}\Bigg)^{\beta}\Bigg).
\end{align*}
By \eqref{eq:moments_bound_inf_mean2} in Lemma \ref{moments_bound_inf_mean},
$$
\Erw\left(\frac{\lceil e^{-j} N_{n}\rceil^{1/\alpha}}{\max_{k=1,\ldots,\lceil e^{-(j+1)}N_{n}\rceil}\theta^{(n)}_{k}}\right)^{\beta}\ \le\ C\Big(\lceil e^{-(j+1)}N_{n}\rceil^{(1/\alpha+\varepsilon)\gamma(\lceil e^{-(j+1)} N_{n}\rceil^{1/\alpha+\varepsilon})}\Big)^{\beta/\alpha},
$$
and this implies
$$
\sum_{n=1}^{\infty}\alpha^n\Erw\sup_{1\le k\le N_{n}}\Big(\log^{-}\frac{R^{(n)}(k)}{k^{1/\alpha}}\Big)<\infty
$$
by the same argument as above. The proof of Lemma \ref{summable_lemma_inf_mean} is complete.\qed
\end{proof}

The counterpart of Proposition \ref{Prop:GW_summary} is next.

\begin{Prop}\label{Prop:GW_summary_Davies}
Let $(R^{(n)}(k))_{k\in\N_{0},n\in\N}$ be as in Lemma \ref{summable_lemma_inf_mean} and put
$$ h_{n}(t)\ :=\ \alpha^{n}\log R^{(n)}(\lfloor  e^{t\alpha^{-(n-1)}}\rfloor) $$
for $n\in\N$ and $t\ge 0$.
\begin{description}[(B5)]\itemsep2pt
\item[(B1)] There exists a $D$-valued random process $(Z_{\infty}^{*}(t))_{t\ge 0}$ such that
$$
\big(h_{n}\circ \cdots\circ h_{1}(t)\big)_{t\ge 0}\ \stackrel{n\to\infty}{\longrightarrow}\ \big(Z_{\infty}^{*}(t)\big)_{t\ge 0}\quad\text{a.s.}
$$
in the space $D$ endowed with the $J_1$-topology. The process $(Z_{\infty}^{*}(t))_{t\ge 0}$ is the a.s.\ limit of $(\alpha^{n}\log Z_{n}(t))_{t\ge 0}$ as $n\to\infty$, where $Z_{n}(t)$ denotes the number of descendants in generation $n$ of ancestors $1,\ldots, \lfloor e^t\rfloor$ in a GWP with generic offspring variable $\theta$ and countably many ancestors $1,2,\ldots$ in generation $0$. The process $(Z_{\infty}^{*}(t))_{t\ge 0}$ has the following representation:
$$
Z_{\infty}^{*}(t)\ :=\ Z_{\infty}^{(*,1)}\vee Z_{\infty}^{(*,2)}\vee \ldots\vee Z_{\infty}^{(*,\lfloor  e^{t}\rfloor)},\quad t\ge 0,
$$
with $(Z_{\infty}^{(*,j)})_{j\in\N}$ denoting independent copies of $Z_{\infty}^{*}$, the limit in \eqref{eq:Z_infty}.

\item[(B2)] For every fixed $k\in\N_{0}$ there exists a copy $(Z_{k,\infty}^{*}(t))_{t\ge 0}$ of the process $(Z_{\infty}^{*}(t))_{t\ge 0}$ such that
$$
\Big(h_{n}\circ \cdots\circ h_{k+1}(t)\Big)_{t\ge 0}\ \stackrel{n\to\infty}{\longrightarrow}\ \Big(\alpha^{k}Z_{k,\infty}^{*}(t\alpha^{-k})\Big)_{t\ge 0}\quad\text{a.s.}
$$
in the space $D$ endowed with the $J_1$-topology.

\item[(B3)] As $t\to\infty$,
$$
\inf_{k\in\N}\,h_{k}\circ\cdots\circ h_{1}(t)\ \to\ \infty\quad\text{a.s.}
$$

\item[(B4)] For any $T>0$,
$$
\sup_{k\in\N}\sup_{t\in[0,T]}\Big|h_{n+k}\circ\cdots\circ h_{n+1}(t)-t\Big|\ \stackrel{n\to\infty}{\longrightarrow}\ 0\quad\text{a.s.}
$$
\item[(B5)] The set $\big\{\alpha^{k} Z_{k,\infty}^{*}(t\alpha^{-k}):t\ge 0,k\in\N\big\}$ is almost surely dense in $[0,\infty)$.
\end{description}
\end{Prop}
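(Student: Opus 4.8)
The plan is to follow the blueprint of the finite-mean Proposition~\ref{Prop:GW_summary}, systematically replacing normalized sums by normalized maxima on the logarithmic scale and substituting Lemma~\ref{summable_lemma_inf_mean} for Lemma~\ref{summable_lemma}. The starting point is the composition identity
\begin{equation*}
h_{n}\circ\cdots\circ h_{1}(t)\ =\ \alpha^{n}\log R^{(n)}\circ\cdots\circ R^{(1)}(\lfloor e^{t}\rfloor),
\end{equation*}
which I would verify by induction: if $u=h_{j}(t)$, then $e^{u\alpha^{-(j-1)}}=R^{(j)}(\lfloor e^{t\alpha^{-(j-1)}}\rfloor)$ is already an integer, so the floor is immaterial and the walks compose cleanly. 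The same computation gives $h_{n}\circ\cdots\circ h_{k+1}(t)=\alpha^{n}\log R^{(n)}\circ\cdots\circ R^{(k+1)}(\lfloor e^{t\alpha^{-k}}\rfloor)$ for $n>k$.

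For (B1), write $R^{(n)}\circ\cdots\circ R^{(1)}(m)=\sum_{j=1}^{m}Z_{n}^{(j)}$, the number of descendants in generation $n$ of ancestors $1,\ldots,m$, where the $Z_{n}^{(j)}$ are independent single-ancestor GWP populations. The key observation is the elementary splitting
\begin{equation*}
\alpha^{n}\log\sum_{j=1}^{m}Z_{n}^{(j)}\ =\ \max_{1\le j\le m}\alpha^{n}\log Z_{n}^{(j)}\ +\ \alpha^{n}\log\Big(\sum_{j=1}^{m}Z_{n}^{(j)}\big/\max_{1\le j\le m}Z_{n}^{(j)}\Big),
\end{equation*}
whose last term tends to $0$ a.s.\ because the ratio lies in $[1,m]$, while the first converges to $Z_{\infty}^{(*,1)}\vee\cdots\vee Z_{\infty}^{(*,m)}$ by Davies' limit \eqref{eq:Z_infty}. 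This is precisely why maxima replace sums in the limit and explains the representation of $Z_{\infty}^{*}(t)$. Since both the prelimit paths and the limit are step functions that are constant on each interval $[\log k,\log(k+1))$ with the same deterministic jump times, pointwise a.s.\ convergence on these intervals upgrades to a.s.\ convergence in the $J_{1}$-topology. Part (B2) then follows from (B1) applied to the shifted array $(R^{(k+1)},R^{(k+2)},\ldots)$ after factoring $\alpha^{k}$ out of the representation above, exactly as (A2) follows from (A1).

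The one step that does not transfer verbatim, and which I expect to be the main obstacle, is (B3). In the finite-mean case (A3) rested on $\Prob\{\inf_{k}\mu^{-k}Z_{k}=0\}=0$; the naive analogue $\inf_{k\ge1}\alpha^{k}\log Z_{k}>0$ is \emph{false}, since $\alpha\log Z_{1}=\alpha\log\xi=0$ whenever $\xi=1$, an event of positive probability. Instead, writing $Z_{k}^{[m]}:=R^{(k)}\circ\cdots\circ R^{(1)}(m)=\sum_{j=1}^{m}Z_{k}^{(j)}$ and using $Z_{k}^{[m]}\ge\max_{1\le j\le m}Z_{k}^{(j)}$ for the late generations together with the monotonicity $Z_{k}^{[m]}\ge Z_{0}^{[m]}=m$ (valid since $\xi\ge1$ a.s.) for the early ones, I would establish, for every fixed $K$,
\begin{equation*}
\inf_{k\ge1}\alpha^{k}\log Z_{k}^{[m]}\ \ge\ \min\Big(\alpha^{K}\log m,\ \max_{1\le j\le m}W_{K}^{(j)}\Big),\qquad W_{K}^{(j)}:=\inf_{k>K}\alpha^{k}\log Z_{k}^{(j)}.
\end{equation*}
As $m\to\infty$ the first entry diverges, while $\max_{j}W_{K}^{(j)}$ increases a.s.\ to $\operatorname{ess\,sup}W_{K}$, and $W_{K}\uparrow Z_{\infty}^{*}$ as $K\to\infty$. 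The whole matter therefore reduces to the \emph{unboundedness} of $Z_{\infty}^{*}$, i.e.\ $\Prob\{Z_{\infty}^{*}>A\}>0$ for every $A$. This I would derive from the max-type distributional recursion $Z_{\infty}^{*}\eqdist\alpha\bigvee_{i=1}^{\xi}Z_{\infty}^{(*,i)}$ (itself obtained from the generation-one split $Z_{k+1}=\sum_{i=1}^{\xi}Z_{k}^{(i)}$ by the same sum-to-max log trick): were $s:=\operatorname{ess\,sup}Z_{\infty}^{*}$ finite, the recursion would force $Z_{\infty}^{*}\le\alpha s$ a.s.\ and hence $s\le\alpha s$, so $s=0$, contradicting $Z_{\infty}^{*}>0$ a.s. Given any $A$, one then picks $K$ with $\operatorname{ess\,sup}W_{K}>A$, and concludes $\inf_{k}\alpha^{k}\log Z_{k}^{[m]}>A$ for all large $m$; monotonicity in $m$ yields the stated limit as $t\to\infty$.

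Finally, (B4) and (B5) are transcriptions of (A4) and (A5). For (B4) I would use (B3) to produce a random $T_{1}$ with $\inf_{k}h_{k}\circ\cdots\circ h_{1}(T_{1})\ge T$, bound the range $A_{1}:=\sup_{k\ge0}\sup_{n>k}\sup_{t\in[0,T]}h_{n}\circ\cdots\circ h_{k+1}(t)\le\sup_{n\ge1}h_{n}\circ\cdots\circ h_{1}(T_{1})<\infty$ using monotonicity of the maps and (B1), and then telescope
\begin{equation*}
\sup_{k}\sup_{t\in[0,T]}\big|h_{n+k}\circ\cdots\circ h_{n+1}(t)-t\big|\ \le\ \sup_{t\in[0,T]}\big|h_{n+1}(t)-t\big|+\sum_{j\ge2}\sup_{s\in[0,A_{1}]}\big|h_{n+j}(s)-s\big|,
\end{equation*}
the right-hand side vanishing as $n\to\infty$ by the summability in Lemma~\ref{summable_lemma_inf_mean}. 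Part (B5) then follows by combining the uniform approximation $h_{m+k}\circ\cdots\circ h_{m+1}(s)\approx s$ from (B4) with the a.s.\ convergence to $\alpha^{m}Z_{m,\infty}^{*}(s\alpha^{-m})$ from (B2), precisely as in the proof of (A5).
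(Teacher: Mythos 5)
Your proposal is correct, and for (B1), (B2), (B4) and (B5) it is essentially the paper's proof: the paper likewise reduces (B1) to the a.s.\ convergence of $\alpha^{n}\log R^{(n)}\circ\cdots\circ R^{(1)}(j)$, splits this quantity into independent single-ancestor populations $Z_{n,j}$ and passes from the sum to the maximum exactly as in your explicit splitting (the error term being at most $\alpha^{n}\log m\to 0$); (B2) is read off the shifted composition identity; and the paper declares (B4), (B5) to be verbatim repetitions of (A4), (A5) with Lemma~\ref{summable_lemma_inf_mean} replacing Lemma~\ref{summable_lemma}, which is what you carry out. The one genuine divergence is (B3). The paper bounds $\inf_{k}h_{k}\circ\cdots\circ h_{1}(t)$ below by $\inf_{k}\max_{j\le\lfloor e^{t}\rfloor}\alpha^{k}\log Z_{k,j}$, splits the infimum at a level $n_{0}$ via the same minimax inequality you use, and then argues: for $k\le n_{0}$, the variables $\inf_{k\le n_{0}}\alpha^{k}\log Z_{k,j}$ are iid with unbounded support (using that $\theta$ has unbounded support since $\Erw\theta=\infty$), so their maximum over $j\le\lfloor e^{t}\rfloor$ diverges; for $k\ge n_{0}$, it proves the quantitative bound $\Prob\{\inf_{k\ge n_{0}}\alpha^{k}\log Z_{k,1}>z\}>0$ by combining the uniform a.s.\ convergence from (B1) with $\Prob\{Z_{\infty}^{*}>z+1\}>0$, the latter cited from Davies' Theorem~2. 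You instead handle the early generations by the deterministic monotonicity bound $Z_{k}^{[m]}\ge m$ (no appeal to the support of $\theta$ needed), and the late generations via $\operatorname{ess\,sup}W_{K}\uparrow\operatorname{ess\,sup}Z_{\infty}^{*}=\infty$, deriving the unboundedness of $Z_{\infty}^{*}$ from the max-type SFPE $Z_{\infty}^{*}\eqdist\alpha\big(Z_{\infty}^{(*,1)}\vee\ldots\vee Z_{\infty}^{(*,\theta)}\big)$ together with $Z_{\infty}^{*}>0$ a.s.\ --- which is precisely the argument the paper invokes, without detail, in the proof of Theorem~\ref{main1_for_RW_heavy_N}. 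Both routes are sound; yours is somewhat more self-contained (unboundedness comes for free from the fixed-point equation plus positivity), while the paper's yields an explicit positive-probability estimate at a prescribed level. One cosmetic slip in your composition identity: if $u=h_{j}(t)$, the integer quantity is $e^{u\alpha^{-j}}=R^{(j)}(\lfloor e^{t\alpha^{-(j-1)}}\rfloor)$, not $e^{u\alpha^{-(j-1)}}$; since $h_{j+1}$ evaluates its argument at $\lfloor e^{u\alpha^{-j}}\rfloor$, this is exactly the quantity needed and your induction goes through as intended.
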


\begin{proof}
(B1) The statement is equivalent to
\begin{equation*}
\left(\alpha^{n}\log R^{(n)} \circ \cdots \circ R^{(1)}(j)\right)_{j\in\N}\ \stackrel{n\to\infty}{\longrightarrow}\ \left(Z_{\infty}^{*}(j)\right)_{j\in\N}\quad\text{a.s.}
\end{equation*}
Introducing
\begin{align*}
Z_{n,1}
&:=
R^{(n)}\circ \ldots \circ R^{(1)}(1),\\
Z_{n,j}
&:=
R^{(n)}\circ \ldots \circ R^{(1)}(j) - R^{(n)}\circ \ldots \circ R^{(1)}(j-1),
\quad j=2,3,\ldots,
\end{align*}
we obtain independent GWP's $(Z_{n,1})_{n\in\N}, (Z_{n,2})_{n\in\N},\ldots$ with generic offspring variable $\theta$. By \eqref{eq:Z_infty}, the random variables
$$
Z_{\infty}^{(*,j)}\ :=\ \lim_{n\to\infty}\alpha^n \log Z_{n,j},\quad j\in\N
$$
exist a.s.\ in $(0,\infty)$ and are independent with the same distribution as $Z_\infty^{*}$. It follows that
\begin{align*}
\alpha^n \log R^{(n)}\circ \ldots \circ R^{(1)}(j)\ =\ \ & \alpha^n \log (Z_{n,1} + \ldots + Z_{n,j})\\
\ \stackrel{n\to\infty}{\longrightarrow}\ \ &Z_{\infty}^{(*,1)} \vee \ldots \vee Z_{\infty}^{(*,j)},
\end{align*}
for any $j\in\N$ which completes the proof of (B1).

\vspace{0.2cm}
(B2) This is an immediate consequence of (B1) and the identity
$$
h_{n}\circ \cdots\circ h_{k+1}(t)\ =\ \alpha^{k}\alpha^{(n-k)}R^{(n)} \circ \cdots \circ R^{(k+1)}(\lceil  e^{t\alpha^{-k}}\rceil),\quad t\ge 0
$$
valid for $n>k$.

\vspace{0.2cm}
(B3) Keeping the notation from (B1), we have
$$
h_{k}\circ\cdots\circ h_{1}(t)\ =\ \alpha^{k}\log\Bigg(\sum_{j=1}^{\lfloor e^{t}\rfloor}Z_{k,j}\Bigg),\quad t\ge 0,
$$
and hence for all $t\ge 0$ and arbitrary $n_{0}\in\N$
\begin{align*}
\inf_{k\in\N}h_{k}\circ\cdots\circ h_{1}(t)\ &\ge\ \inf_{k\in\N}\max_{1\le j\le\lfloor e^{t}\rfloor}(\alpha^{k}\log Z_{k,j})\\
&=\ \Big(\inf_{k>n_{0}}\max_{1\le j\le\lfloor e^{t}\rfloor}(\alpha^{k}\log Z_{k,j})\Big)\wedge \Big(\inf_{k\le n_{0}}\max_{1\le j\le\lfloor e^{t}\rfloor}(\alpha^{k}\log Z_{k,j})\Big)\\
&\ge\ \Big(\max_{1\le j\le\lfloor e^{t}\rfloor}\inf_{k>n_{0}}(\alpha^{k}\log Z_{k,j})\Big)\wedge \Big(\max_{1\le j\le\lfloor e^{t}\rfloor}\inf_{k\le n_{0}}(\alpha^{k}\log Z_{k,j})\Big)
\end{align*}
by the minimax inequality. Since the $\inf_{k\le n_{0}}(\alpha^{k}\log Z_{k,j})$, $j\in\N$, are iid with a law having unbounded support\footnote{this is an easy consequence of the fact that $\theta$ has unbounded support in view of $\Erw\theta=\infty$}, i.e.
\begin{equation*}
\Prob\left\{\inf_{k\le n_{0}}(\alpha^{k}\log Z_{k,1})>z\right\}\ >\ 0
\end{equation*}
for every $z>0$, we deduce that
$$
\max_{1\le j\le\lfloor e^{t}\rfloor}\inf_{k\le n_{0}}(\alpha^{k}\log Z_{k,j})\ \stackrel{t\to\infty}{\longrightarrow}\ \to \infty\quad\text{a.s.}
$$
By the same arguments,
$$
\max_{1\le j\le\lfloor e^{t}\rfloor}\inf_{k > n_{0}}(\alpha^{k}\log Z_{k,j})\ \stackrel{t\to\infty}{\longrightarrow}\ \to \infty\quad\text{a.s.}
$$
if we can show the existence of $n_{0}\in\N$ such that
\begin{equation}\label{eq:B4_proof}
\Prob\{\inf_{k\ge n_{0}}(\alpha^{k}\log Z_{k,1})>z\}>0
\end{equation}
for all $z>0$. To this end, fix $z>0$ and note that, by \cite[Theorem 2]{Davies:78}, we have $a:=\Prob\{Z_{\infty}^{*} > z+1\}<1$. On the other hand, we know from (B1) that
$$
\lim_{n\to\infty}\,\Prob\left\{\sup_{k\ge n}|\alpha^k\log Z_{k,1}-Z_{\infty}^{*}|\ge 1\right\}\ =\ 0.
$$
In particular, for each $0<\delta<a$, we find $n_{0}\in\N$ such that
$$
\Prob\left\{\sup_{k\ge n_{0}}|\alpha^k\log Z_{k,1}-Z_{\infty}^{*}|\ge 1\right\}\ <\ \delta,
$$
and so
\begin{align*}
\Prob\left\{\inf_{k\ge n_{0}}(\alpha^{k}\log Z_{k,1})\le z\right\}\ &=\ \Prob\left\{\inf_{k\ge n_{0}}(\alpha^{k}\log Z_{k,1})\le z,\,Z_{\infty}^{*} \le z+1\right\}\\
&+\ \Prob\left\{\inf_{k\ge n_{0}}(\alpha^{k}\log Z_{k,1})\le z,\,Z_{\infty}^{*} > z+1\right\}\\
&\le\ 1-a+\Prob\left\{\sup_{k\ge n_{0}}|\alpha^k\log Z_{k,1}-Z_{\infty}^{*}|\ge 1\right\}\\
&\le\ 1-a+\delta<1,
\end{align*}
which proves \eqref{eq:B4_proof}.

\vspace{0.2cm}
The proofs of (B4) and (B5) are omitted because they follow verbatim those of parts (A4) and (A5) of Proposition \ref{Prop:GW_summary} (with Lemma \ref{summable_lemma_inf_mean} for part (B4) instead of Lemma \ref{summable_lemma} for part (A4) in Proposition \ref{Prop:GW_summary}). \qed
\end{proof}

\subsection{Proof of Theorems \ref{main1_for_RW_heavy}, \ref{main1_for_RW_heavy_N}, \ref{main1_for_RW_heavy_T} and \ref{fixed_point_solutions_inf_mean}, and Proposition \ref{Prop_Sibuya}}

Define the random map $\phi:\R^{\N}\to\R^{\N}$ by
\begin{equation}\label{eq:phi_mapping_definition}
\phi((x_{1},x_2,\ldots))\ :=\ \alpha(x_{R(1)},x_{R(2)},\ldots)
\end{equation}
and further $\phi_{n}:\R^{\N}\to\R^{\N}$ for $n\in\N$ by
\begin{equation}\label{eq:phi_seq_mapping_definition}
\phi_{n}((x_{1},x_2,\ldots))\ :=\ \alpha (x_{R^{(n)}(1)},x_{R^{(n)}(2)},\ldots),\quad n\in\N
\end{equation}
which are independent copies of $\phi$ (compare \eqref{eq:psi_mapping_definition} and \eqref{eq:psi_seq_mapping_definition} in the finite-mean case).

\begin{proof}[of Theorem \ref{main1_for_RW_heavy}]
The convergence in \eqref{eq:RW_S_conv_heavy} is a consequence of \eqref{eq:basic_identity} and part (B1) of Proposition \ref{Prop:GW_summary_Davies}. The SFPE \eqref{fixed_point_inf_mean} follows from the continuity of the map $\phi$ with respect to the product topology on $\R^{\N}$.\qed
\end{proof}

\begin{proof}[of Theorem \ref{main1_for_RW_heavy_N}]
Note first that  $(N^{(n)}_{\lfloor\exp(x\alpha^{-n})\rfloor})_{x\in\R^{+}}$ is the counting process associated with the increasing random sequence $(\alpha^{n} \log S^{(n)}_{k})_{k\ge 1}$. By Theorem \ref{main1_for_RW_heavy},  the finite-dimensional distributions of this sequence converge as $n\to\infty$ to those of the nondecreasing random sequence  $(Z_{\infty}^{(*,1)} \vee \ldots \vee Z_\infty^{(*,k)})_{k\ge 1}$ with associated counting process $(N''(x))_{x\in\R^{+}}$. Also, for every fixed $n\in\N$,
$$
\lim_{k\to\infty} \alpha^n \log S_{k}^{(n)}\ =\ \lim_{k\to\infty} (Z_{\infty}^{(*,1)} \vee \ldots \vee Z_\infty^{(*,k)})\ =\ \infty \quad \text{a.s.}
$$
because $\xi>0$ a.s.\  and $Z_\infty^{*}$ is not bounded from above. The latter statement follows from the SFPE $\alpha(Z_{\infty}^{(*,1)} \vee \ldots \vee Z_\infty^{(*,\xi)})\eqdist Z_\infty^*$.  By Lemma \ref{lem:skorokhod_counting} from the Appendix, this implies the weak convergence of the corresponding counting processes on $D$ endowed with the $M_1$-topology.
%As explained in the proof of Theorem \ref{main1_for_RW_N}, it is enough to show convergence of the finite-dimensional distributions of $N^{(n)}_{\lfloor\exp(\cdot\alpha^{-n})\rfloor}$. Fixing $0\le x_{1}<\ldots<x_m<\infty$ and $k_{1},\ldots,k_m\in\N_{0}$, we infer
%\begin{align*}
%\Prob\left\{N^{(n)}_{\lfloor\exp(x_{j}\alpha^{-n}}\rfloor)\ge k_{j},\,1\le j\le m\right\}\ =~&\Prob\left\{S_{k_{j}}^{(n)}\le \lfloor %e^{x_{j}\alpha^{-n}}\rfloor,\,1\le j\le m\right\}\\
%=~&\Prob\left\{\alpha^n\log S_{k_{j}}^{(n)}\le  x_{j},\,1\le j\le m\right\}\\
%\stackrel{n\to\infty}\longrightarrow ~ &\Prob\big\{Z_{\infty}^{(*,1)}\vee\ldots\vee Z_{\infty}^{(*,k_{j})} \le  x_{j},\,1\le j\le m\big\}\\
%\ =~&\Prob\big\{N'(x_{j}) \ge  k_{j},\,1\le j\le m\big\},
%\end{align*}
%where the convergence follows from Theorem \ref{main1_for_RW_heavy} and the fact that the maxima $Z_{\infty}^{(*,1)}\vee\ldots\vee %Z_{\infty}^{(*,k_{j})}$ have no atoms.\qed
\end{proof}

\begin{proof}[of Theorem \ref{main1_for_RW_heavy_T}]
Here the assertion is implied by
\begin{align*}
\Prob\left\{T(\lfloor e^{\alpha^{-n}x}\rfloor)-n\le k\right\}\ &=\ \Prob\left\{S_{1}^{(n+k)}>\lfloor e^{\alpha^{-n}x}\rfloor\right\}\\
&=\ \Prob\left\{\alpha^{n+k}\log S_{1}^{(n+k)}>\alpha^{k}x\right\}\ \stackrel{n\to\infty}\longrightarrow\ \Prob\{Z_{\infty}^{*}>\alpha^{k}x\}.\quad\qed
\end{align*}
\end{proof}

\begin{proof}[of Proposition \ref{Prop_Sibuya}]
If $\xi$ has a Sibuya-distribution with parameter $\alpha$, then the size $Z_{n}$ of the associated GWP in generation $n$ has a Sibuya distribution with parameter $\alpha^n$ because $f_{\alpha}  \circ f_{\beta} = f_{\alpha\beta}$ and hence,
$$
\Erw t^{Z_{n}}\ =\ f_{\alpha}\circ \ldots \circ f_{\alpha}(t)\ =\ f_{\alpha^n}(t). % = 1 - (1 - t)^{\alpha^n}.
$$
By Lemma~\ref{lem:sibuya_null} in Appendix, the random variable $Z^{*}_\infty$ from~\eqref{eq:Z_infty} has a standard exponential law. It remains to argue that
$$
\sum_{j=1}^{\infty} \delta_{Z_{\infty}^{(*,1)}\vee \ldots \vee Z_{\infty}^{(*,j)}}
\ \eqdist\ \sum_{i=1}^{\infty} G_{i}\delta_{P_{i}}.
$$
But a well-known consequence of the memoryless property of the exponential distribution is that the record values (taken without repetitions and denoted by $P_{1},P_2,\ldots$) in the sequence $(Z_{\infty}^{(*,j)})_{j\ge 1}$ form a Poisson point process (Tata's representation, \cite[p.~69]{Nevzorov:01}). Given the record values $P_1,P_2,\ldots$, the interrecord times $G_1,G_2,\ldots$ are independent and have geometric distributions with parameters $e^{-P_{1}},e^{-P_{2}},\ldots$, respectively, by \cite[Theorem 17.1, p.~77]{Nevzorov:01}.
\qed
\end{proof}

\begin{proof}[of Theorem \ref{fixed_point_solutions_inf_mean}]
Again our arguments follow along similar lines as those in the proof of Theorem \ref{fixed_point_solutions_fin_mean} for the finite-mean case. Let $(X^{(0)}_{1},X^{(0)}_2,\ldots)$ be a solution to \eqref{fixed_point_inf_mean}, i.e.
$$
(X^{(0)}_{1},X^{(0)}_2,\ldots)\ \eqdist\ \alpha(X^{(0)}_{R(1)},X^{(0)}_{R(2)},\ldots).
$$
By the Kolmogorov consistency theorem, the underlying probability space $(\Omega,\fA,\Prob)$ may be assumed to be large enough to carry the following objects:
\begin{itemize}\itemsep2pt
\item the random sequence $(X^{(0)}_{1},X^{(0)}_{2},\ldots)$;
\item a two-sided sequence $(R^{(n)}(\cdot))_{n\in\Z}$ of independent copies of the random walk $R(\cdot)$ and the corresponding sequence of random maps \eqref{eq:phi_seq_mapping_definition};
\item a two-sided stationary sequence $(V_{k})_{k\in\Z}=\big((X_{1}^{(k)},X_{2}^{(k)},\ldots)\big)_{k\in\Z}$ such that $V_{k}$ is independent of $(\phi_{n})_{n\le k}$ for each $k\in\Z$, and
$$ V_{0}\ :=\ \big(X^{(0)}_{1},X^{(0)}_{2},\ldots\big)\quad\text{and}\quad V_{k}\ =\ \phi_{k+1}(V_{k+1}),\quad k\in\Z, $$
thus
$$ \big(X^{(k)}_{1},X^{(k)}_{2},\ldots\big)\ =\ \alpha\left(X^{(k+1)}_{R^{(k+1)}(1)},X^{(k+1)}_{R^{(k+1)}(2)},\ldots\right),\quad k\in\Z. $$
\end{itemize}
Define a sequence of random measures
$(\upsilon_{n})_{n\ge 0}$ on $[0,\infty)$ by
$$
\upsilon_{n}[0,t]\ :=\ \alpha^n \log X^{(n)}_{\lfloor\exp(t\alpha^{-n})\rfloor},\quad t \ge  0
$$
for $n\in\N_{0}$. As in the proof of Theorem \ref{fixed_point_solutions_fin_mean}, it is enough to show that the sequence $(\upsilon_{n})_{n\in\N}$ converges almost surely in the vague topology, i.e.
\begin{equation}\label{eq:vague_convergence_{k}appa}
\upsilon_{n}\ \vag\ \upsilon_{\infty}\quad\text{a.s.}
\end{equation}

Recall from Proposition \ref{Prop:GW_summary_Davies} that $h_{n}(t)=\alpha^{n}\log R^{(n)}(\lfloor  e^{t\alpha^{-(n-1)}}\rfloor)$ for $n\in\N$ and $t\ge 0$. We have
\begin{equation}\label{eq:kappa_recursion}
\upsilon_{n}[0,t]\ =\ \upsilon_{n+1}[0,h_{n+1}(t)]
\end{equation}
and upon iteration
\begin{equation}\label{eq:kappa_{k}_recursion}
\upsilon_{k}[0,\,t]\ =\ \upsilon_{n+k}[0,\,h_{n+k}\circ\cdots\circ h_{k+1}(t)]
\end{equation}
for $n\in\N_{0}$ and $t\ge 0$, in particular for $k=0$
\begin{equation}\label{eq:kappa_{0}_recursion}
\upsilon_{0}([0,\,t])\ =\ \upsilon_{n}\left[0,\,\alpha^{n}\log\left(\sum_{j=1}^{\lfloor  e^{t}\rfloor}Z_{n,j}\right)\right],\quad n\in\N,\quad t\ge 0.
\end{equation}
Since
$$
\lim_{t\to\infty}\lim_{n\to\infty}\alpha^{n}\log \sum_{j=1}^{\lfloor  e^{t}\rfloor}Z_{n,j}=\lim_{t\to\infty}Z_{\infty}^{*}(t)=\infty \quad \text{a.s.},
$$
see (B1) in Proposition \ref{Prop:GW_summary_Davies}, equation \eqref{eq:kappa_{0}_recursion} implies that, for any $T>0$,
$$%%\begin{equation}\label{eq:sup_{k}appa_finite}
\sup_{n\in\N_{0}}\upsilon_{n}[0,T]\ <\ \infty\quad\text{a.s.}
$$%%\end{equation}
and therefore almost sure relative compactness of $(\upsilon_{n})_{n\ge 0}$ in the vague topology.

\vspace{.1cm}
Let $(\upsilon_{m_{n}})_{n\ge 1}$ be an a.s. vaguely convergent subsequence and $\upsilon'_{\infty}$ its limit. From \eqref{eq:kappa_{k}_recursion}, we have a.s. for each $k\in\N_{0}$ and $m_{n}>k$ that
\begin{equation}\label{proof_m_{n}_{k}appa}
\upsilon_{k}[0,t]\ =\ \upsilon_{m_{n}}[0,h_{m_{n}}\circ\cdots\circ h_{k+1}(t)],\quad t\ge 0.
\end{equation}
By part (B2) of Proposition \ref{Prop:GW_summary_Davies},
$$
h_{m_{n}}\circ\cdots\circ h_{k+1}(t)\ \stackrel{n\to\infty}{\longrightarrow}\ \alpha^{k}Z_{k,\infty}^{*}(t\alpha^{-k})\quad\text{a.s.}
$$
in the space $D$ endowed with the $J_1$-topology. Sending $n\to\infty$ in \eqref{proof_m_{n}_{k}appa} and applying Lemma \ref{lemma_continuity}, we obtain
$$%\begin{equation}\label{eq:kappa_infty_well_defined}
\upsilon_{k}[0,t]\ =\ \upsilon'_{\infty}[0,\,\alpha^{k}Z_{k,\infty}^{*}(t\alpha^{-k})],\quad t>0
$$%\end{equation}
for every fixed $k\in\N_{0}$. By part (B6) of Proposition \ref{Prop:GW_summary_Davies}, the random set consisting of $\alpha^{k}Z_{k,\infty}^{*}(t\alpha^{-k})$ for $t\ge 0,\,k\in\N_{0}$ is a.s. dense in $[0,\infty)$. Hence, if $\upsilon''_{\infty}$ denotes another subsequential limit of $(\upsilon_{n})_{n\ge 1}$, then a.s.
$$
\upsilon''_{\infty}[0,t]\ =\ \upsilon'_{\infty}[0,t]
$$
on a dense subset of $[0,\infty)$ and therefore $\upsilon''_{\infty}\equiv\upsilon'_{\infty}$, proving the convergence \eqref{eq:vague_convergence_{k}appa}. 

\vspace{.1cm}
The restricted self-similarity property \eqref{G_selfsimilar2} can be checked as in the proof of Theorem~\ref{fixed_point_solutions_fin_mean}.\qed
\end{proof}

\section{Appendix}
\begin{Lemma}\label{lemma_continuity}
Let $\mathcal{M}:=\mathcal{M}[0,\infty)$ be the set of locally finite measures on $[0,\infty)$ endowed with the vague topology and let $\phi:\mathcal{M}\times [0,\infty)\to \R^{+}$ be defined by
$$ \phi(\mu,x)=\mu([0,x]). $$
Endowing $\mathcal{M}\times [0,\infty)$ with the product topology, the mapping $\phi$ is continuous at any $(\mu,x)$ such that $\mu(\{x\})=0$.
\end{Lemma}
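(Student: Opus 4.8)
The plan is to exploit that the vague topology on $\mathcal{M}[0,\infty)$ is metrizable (as $[0,\infty)$ is locally compact and second countable), so that $\phi$ is continuous at a point exactly when it is sequentially continuous there. Accordingly, I would fix $(\mu,x)$ with $\mu(\{x\})=0$ and an arbitrary sequence $(\mu_{n},x_{n})\to(\mu,x)$ in the product topology, that is $\mu_{n}\vag\mu$ and $x_{n}\to x$, and aim to establish $\mu_{n}([0,x_{n}])\to\mu([0,x])$.

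The heart of the argument is a two-sided sandwich. Fix $\delta>0$. Since $x_{n}\to x$, for all sufficiently large $n$ we have $x-\delta\le x_{n}\le x+\delta$, whence by monotonicity of measures $\mu_{n}([0,x-\delta])\le\mu_{n}([0,x_{n}])\le\mu_{n}([0,x+\delta])$. To turn these into quantities governed by vague convergence, I would introduce two continuous, compactly supported (say piecewise linear) test functions $g_{+}$ with $\mathbbm{1}_{[0,x+\delta]}\le g_{+}\le\mathbbm{1}_{[0,x+2\delta]}$ and $g_{-}$ with $\mathbbm{1}_{[0,x-2\delta]}\le g_{-}\le\mathbbm{1}_{[0,x-\delta]}$. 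Then $\int g_{-}\,d\mu_{n}\le\mu_{n}([0,x_{n}])\le\int g_{+}\,d\mu_{n}$ for large $n$, and letting $n\to\infty$ along the vague convergence yields
\[
\mu([0,x-2\delta])\ \le\ \liminf_{n}\mu_{n}([0,x_{n}])\ \le\ \limsup_{n}\mu_{n}([0,x_{n}])\ \le\ \mu([0,x+2\delta]).
\]

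It then remains to send $\delta\downarrow 0$, and here the hypothesis $\mu(\{x\})=0$ enters decisively. Local finiteness of $\mu$ gives continuity from above, $\mu([0,x+2\delta])\downarrow\mu([0,x])$, while continuity from below gives $\mu([0,x-2\delta])\uparrow\mu([0,x))=\mu([0,x])-\mu(\{x\})=\mu([0,x])$. Consequently both the $\liminf$ and $\limsup$ above are squeezed to the common value $\mu([0,x])$, which establishes the desired convergence $\phi(\mu_{n},x_{n})\to\phi(\mu,x)$ and hence the claimed continuity.

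I expect no serious obstacle, the argument being a variant of the portmanteau theorem; the only point requiring genuine care is the boundary behavior at $x$. Without the assumption $\mu(\{x\})=0$ the lower bound would converge to $\mu([0,x))$, differing from $\mu([0,x])$ by the atom $\mu(\{x\})$, and $\phi$ would in fact be discontinuous there, since approximating $x$ from below and from above produces distinct limits. The hypothesis is therefore precisely what aligns the two sides of the sandwich. (A minor edge case, $x<2\delta$, where $g_{-}$ degenerates to the zero function, is handled trivially and is irrelevant in the limit $\delta\downarrow 0$ whenever $x>0$, while for $x=0$ the lower bound is $0=\mu([0,0])$ by hypothesis.)
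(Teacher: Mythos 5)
Your proof is correct, but it follows a genuinely different route from the paper's. The paper dispatches the lemma in two lines by citing Lemma 15.7.3 of Kallenberg \cite{Kallenberg:83}: taking $f_{n}(s)=\1_{\{s\le x_{n}\}}$, $f_{0}(s)=\1_{\{s\le x_{0}\}}$ and the sets $B_{n}=[0,\infty)$, $B_{0}=(0,\infty)\setminus\{x_{0}\}$, that general result (on convergence of $\int f_{n}\,d\mu_{n}$ to $\int f_{0}\,d\mu_{0}$ when $\mu_{n}\vag\mu_{0}$ and the integrands converge continuously off a $\mu_{0}$-negligible exceptional set) immediately gives $\mu_{n}([0,x_{n}])\to\mu_{0}([0,x_{0}])$, the hypothesis $\mu_{0}(\{x_{0}\})=0$ being exactly what makes the exceptional point negligible. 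You instead reprove the special case of that lemma from scratch: metrizability of the vague topology reduces continuity to sequential continuity, and the two-sided sandwich with piecewise linear test functions $g_{\pm}$, combined with monotone continuity of $\mu$ from above and below and the no-atom hypothesis, squeezes $\mu_{n}([0,x_{n}])$ onto $\mu([0,x])$. All steps check out, including the use of local finiteness for continuity from above and the edge case $x=0$ (where $\mu(\{0\})=0$ is part of the hypothesis). What each approach buys: the paper's proof is shorter and leans on a standard reference as a black box, while yours is self-contained, elementary, and makes explicit precisely where $\mu(\{x\})=0$ enters, namely in reconciling the one-sided limits $\mu([0,x))$ and $\mu([0,x])$; your closing remark that $\phi$ is genuinely discontinuous at atoms of $\mu$ is a correct observation that the paper does not state.
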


\begin{proof}
Let $\mu_{n}\vag \mu_{0}$, $\lim_{n\to\infty}x_{n}=x_{0}$ and set $f_{0}(s):=\1_{\{s\le x_{0}\}}$, $B_{0}:=(0,\infty)\setminus\{x_{0}\}$, $f_{n}(s):=\1_{\{s\le x_{n}\}}$, $B_{n}:=[0,\infty)$ for $n\in\N$. The result now follows from Lemma 15.7.3 in \cite{Kallenberg:83}.\qed
\end{proof}

\begin{Lemma}\label{lem:skorokhod_counting}
For every $n\in\N\cup\{\infty\}$ let $0\le X_1^{(n)} \le X_2^{(n)}\le  \ldots$ be a random sequence such that $\lim_{k\to\infty} X_{k}^{(n)}=\infty$ a.s.\ and
\begin{equation}\label{eq:appendix_weak_seq}
(X_1^{(n)}, X_2^{(n)},\ldots)\ \todistrfd\ (X_1^{(\infty)}, X_2^{(\infty)}, \ldots).
\end{equation}
Define the corresponding counting processes $N^{(n)} (x) := \#\{k\in\N: X_{k}^{(n)}\le x\}$, $x\ge 0$, $n\in\N\cup\{\infty\}$. Then
$$
(N^{(n)}(x))_{x\ge 0}\ \todistrD\ (N^{(\infty)}(x))_{x\ge 0}
$$
in $D$ endowed with the $M_1$-topology, and the latter may be even replaced with the $J_1$-topology if the sequence $(X_{k}^{(\infty)})_{k\ge 1}$ is strictly increasing with probability $1$.
\end{Lemma}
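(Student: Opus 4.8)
The plan is to exhibit each counting process as the image of its point sequence under one fixed map and then appeal to the continuous mapping theorem. Define $\Phi:\R^{\N}\to D$ by
\[
\Phi\big((x_{k})_{k\ge1}\big)(x)\ :=\ \#\{k\in\N: x_{k}\le x\},\qquad x\ge0,
\]
so that $N^{(n)}=\Phi\big((X^{(n)}_{k})_{k\ge1}\big)$ for every $n\in\N\cup\{\infty\}$. The space $\R^{\N}$ with the product topology is Polish, and on it weak convergence coincides with convergence of finite-dimensional distributions; hence \eqref{eq:appendix_weak_seq} states exactly that $(X^{(n)}_{k})_{k}\to(X^{(\infty)}_{k})_{k}$ in distribution on $\R^{\N}$. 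Since the law of $(X^{(\infty)}_{k})_{k}$ is concentrated on nondecreasing sequences tending to $+\infty$ (resp., under the extra hypothesis, on strictly increasing such sequences), it suffices to prove that $\Phi$ is continuous into $(D,M_1)$ (resp. $(D,J_1)$) at every such sequence; the claim then follows from the continuous mapping theorem, $\Phi$ being clearly measurable.

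The deterministic continuity of $\Phi$ is where the substance lies. Fix a nondecreasing $x^{(\infty)}=(x^{(\infty)}_{k})_{k}$ with $x^{(\infty)}_{k}\to\infty$ and suppose $x^{(n)}_{k}\to x^{(\infty)}_{k}$ for every fixed $k$; I may assume the limiting points are strictly positive, as they are in all our applications. First I would handle a single point: for $x^{(\infty)}_{k}>0$ the one-jump function $\1_{[x^{(n)}_{k},\infty)}$ converges to $\1_{[x^{(\infty)}_{k},\infty)}$ in the $J_1$-topology, a matching time change moving the jump location from $x^{(\infty)}_{k}$ to $x^{(n)}_{k}$. Next, for a fixed truncation level $K$ put $\Phi_{K}(x):=\sum_{k=1}^{K}\1_{[x_{k},\infty)}$. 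Each summand is nondecreasing with a positive jump, and addition of finitely many monotone functions is continuous in the $M_1$-topology; hence $\Phi_{K}(x^{(n)})\to\Phi_{K}(x^{(\infty)})$ in $M_1$. When the limiting locations $x^{(\infty)}_{1},\dots,x^{(\infty)}_{K}$ are pairwise distinct the limiting summands have no common discontinuity, so addition is even $J_1$-continuous and the convergence holds in $J_1$.

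Finally I would remove the truncation using $x^{(\infty)}_{k}\to\infty$. Given a continuity point $T$ of $\Phi(x^{(\infty)})$, set $K:=\Phi(x^{(\infty)})(T)$, so that $x^{(\infty)}_{K+1}>T$; as $x^{(n)}_{K+1}\to x^{(\infty)}_{K+1}>T$ and the sequences are nondecreasing, for all large $n$ no point of index exceeding $K$ falls in $[0,T]$, whence $\Phi(x^{(n)})$ and $\Phi_{K}(x^{(n)})$ coincide on $[0,T]$, and likewise for the limit. Thus the $M_1$- (resp. $J_1$-) convergence of the truncations transfers to $\Phi(x^{(n)})\to\Phi(x^{(\infty)})$ on $[0,T]$, and letting $T\to\infty$ through continuity points gives convergence in $D[0,\infty)$. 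The hard part is this deterministic step, and in particular the correct use of the $M_1$-continuity of addition of monotone functions: it is exactly this feature that absorbs coincidences $x^{(\infty)}_{k}=x^{(\infty)}_{k+1}$ (the clusters) into a single jump of the limit, and it makes transparent why $J_1$ must be relinquished precisely when the limiting sequence is not strictly increasing. A minor point requiring attention is the boundary $x=0$, which is harmless here because the limiting points are almost surely strictly positive in all the situations to which we apply the lemma.
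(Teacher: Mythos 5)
Your proposal follows the same route as the paper: realize each counting process as the image of its point sequence under one fixed map and invoke the continuous mapping theorem, with $M_1$-continuity of that map at monotone sequences and $J_1$-continuity at strictly increasing ones. You actually go further than the paper in one respect: the paper dismisses the continuity of the counting map as an easy exercise, whereas you prove it, via truncation plus the $M_1$-continuity of addition of nondecreasing functions (legitimate, since all jumps have the same sign) and the $J_1$-continuity of addition when the limiting jump locations are distinct. That deterministic part of your argument is sound, and your closing remark about the boundary point $0$ is apt (the paper leaves the same caveat implicit; in both applications the limiting points are a.s.\ positive).

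There is, however, one genuine flaw in the probabilistic packaging. You define $\Phi$ on all of $\R^{\N}$ and assert that it suffices to check continuity at the nondecreasing, divergent sequences carrying the limit law. As a map on $\R^{\N}$, $\Phi$ is neither well defined into $D$ (the counting function of an arbitrary real sequence need not be finite) nor continuous at any monotone point: perturb $x^{(\infty)}=(1,2,3,\ldots)$ by setting the $n$-th coordinate of $x^{(n)}$ equal to $1/2$ and leaving the others unchanged; then $x^{(n)}\to x^{(\infty)}$ in the product topology, yet $\Phi(x^{(n)})(3/4)=1$ for every $n$ while $\Phi(x^{(\infty)})(3/4)=0$. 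Hence the discontinuity set of (any measurable extension of) $\Phi$ has full mass under the limit law, and the continuous mapping theorem, as you invoke it, does not apply. The repair is exactly the step the paper makes explicit: take as domain the space $L_{\le}$ of nondecreasing sequences diverging to $+\infty$, equipped with the (metrizable) topology of pointwise convergence; all laws in question are concentrated on $L_{\le}$, weak convergence on $\R^{\N}$ of such laws passes to the subspace, and on this domain your deterministic argument---which already uses that the approaching sequences are nondecreasing, e.g.\ in the truncation step---establishes genuine continuity of the counting map, in $M_1$ in general and in $J_1$ on the strictly increasing sequences. With the domain corrected, your proof is complete and coincides in substance with the paper's.
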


\begin{proof}
Consider the space $L_{\le}$ of all sequences $y= (y_{k})_{k\in\N}$ such that $0\le y_{1}\le y_{2}\le \ldots$ and $\lim_{k\to\infty} y_{k}= \infty$. We endow $L_{\leq}$ with the topology of pointwise convergence. It is well-known that this topology is metrizable.  Consider a map $\Psi: L_\le \to D$ which assigns to each sequence $y = (y_{k})_{k\in\N}$ the corresponding counting function
$$
\Psi(y) (x) = \#\{k\in\N: y_{k} \le x\}, \quad x\ge 0.
$$
If we endow the $D$ with the $M_1$-topology, then it is an easy exercise to check that the map $\Psi$ is continuous on $L_{\le}$. Let us now endow $D$ with the $J_1$-topology. Then, $\Psi$ is not continuous on  $L_{\le}$ (because the points can build clusters in the limit), but it is continuous on the subset $L_{<}$ consisting of strictly increasing sequences.

We can consider $(X_{k}^{(n)})_{k\ge 1}$, $n\in\N\cup\{\infty\}$, as random elements with values in $L_{\leq}$ (in  $L_{<}$ when using the $J_1$-topology). Then the convergence in \eqref{eq:appendix_weak_seq} is equivalent to the weak convergence of the corresponding random elements. The statement of the lemma thus follows from the continuous mapping theorem.\qed
\end{proof}

\begin{Lemma}\label{lem:sibuya_null}
If $S_{\alpha}$ denotes a random variable with a Sibuya distribution with parameter $0<\alpha<1$, then
$$
\lim_{\alpha\downarrow 0} \Prob\{\alpha \log S_{\alpha} \le x\}\ =\ 1-e^{-x}
% \overset{d}{\underset{\alpha\downarrow 0}\longrightarrow} {\rm{Exp}}(1).
$$
for all $x>0$.
\end{Lemma}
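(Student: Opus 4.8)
The plan is to reduce the statement to an explicit formula for the upper tail of the Sibuya law and then to analyze a joint limit in which the argument tends to infinity while the parameter $\alpha$ tends to zero. Expanding the generating function $f_{\alpha}(t)=1-(1-t)^{\alpha}$ into a binomial series yields the point masses $\Prob\{S_{\alpha}=k\}=(-1)^{k+1}\binom{\alpha}{k}=\frac{\alpha}{k!}\prod_{j=1}^{k-1}(j-\alpha)$ for $k\ge 1$, which are positive since $0<\alpha<1$. From this one reads off the clean product form of the tail,
$$
\Prob\{S_{\alpha}>n\}\ =\ \prod_{j=1}^{n}\Big(1-\frac{\alpha}{j}\Big),\qquad n\in\N_{0}.
$$
This identity follows by a short induction: the ratio $\Prob\{S_{\alpha}=n\}/\Prob\{S_{\alpha}>n-1\}$ equals $\alpha/n$, so that $\Prob\{S_{\alpha}>n\}=(1-\alpha/n)\,\Prob\{S_{\alpha}>n-1\}$, and the base case $\Prob\{S_{\alpha}>0\}=1$ closes the recursion. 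Since $S_{\alpha}$ is $\N$-valued, for any $x>0$ we have $\Prob\{\alpha\log S_{\alpha}\le x\}=\Prob\{S_{\alpha}\le e^{x/\alpha}\}=1-\Prob\{S_{\alpha}>n_{\alpha}\}$ with $n_{\alpha}:=\lfloor e^{x/\alpha}\rfloor$, so it suffices to prove $\Prob\{S_{\alpha}>n_{\alpha}\}\to e^{-x}$ as $\alpha\downarrow 0$.

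Next I would take logarithms in the product formula and expand each term (legitimate since $\alpha/j\le\alpha<1$), obtaining
$$
\log\Prob\{S_{\alpha}>n_{\alpha}\}\ =\ -\alpha H_{n_{\alpha}}\ -\ \sum_{m\ge 2}\frac{\alpha^{m}}{m}\sum_{j=1}^{n_{\alpha}}j^{-m},
$$
where $H_{n}=\sum_{j=1}^{n}1/j$ is the $n$-th harmonic number. The double sum is nonnegative and bounded above by $\zeta(2)\sum_{m\ge 2}\alpha^{m}=\zeta(2)\alpha^{2}/(1-\alpha)$, hence tends to $0$ as $\alpha\downarrow 0$ uniformly in the value of $n_{\alpha}$.

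It then remains to control $\alpha H_{n_{\alpha}}$. Using the asymptotics $H_{n}=\log n+\gamma+o(1)$ together with the sandwich $e^{x/\alpha}-1<n_{\alpha}\le e^{x/\alpha}$, I get $\alpha\log n_{\alpha}\to x$, because the floor and the $-1$ contribute only a term of order $\alpha\,e^{-x/\alpha}$, while $\alpha\gamma\to 0$; therefore $\alpha H_{n_{\alpha}}\to x$. Combining the two displays gives $\log\Prob\{S_{\alpha}>n_{\alpha}\}\to -x$, so $\Prob\{\alpha\log S_{\alpha}\le x\}=1-\Prob\{S_{\alpha}>n_{\alpha}\}\to 1-e^{-x}$, as claimed.

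The one delicate point is that $\alpha\to 0$ and $n_{\alpha}\to\infty$ occur simultaneously, so a fixed-$\alpha$ asymptotic such as $\Gamma(n+1-\alpha)/\Gamma(n+1)\sim n^{-\alpha}$ cannot be invoked directly. The virtue of the product representation is precisely that it turns the problem into the elementary harmonic sum $\alpha H_{n_{\alpha}}$, whose joint limit is transparent and whose error terms are bounded uniformly in $n$. (Alternatively, one could keep the equivalent Gamma-function form $\Prob\{S_{\alpha}>n\}=\Gamma(n+1-\alpha)/(\Gamma(n+1)\Gamma(1-\alpha))$ and apply Wendel's inequality to squeeze $\Gamma(n+1-\alpha)/\Gamma(n+1)$ between $(n+1-\alpha)^{-\alpha}$ and $(n+1-\alpha)^{-\alpha}\big((n+1-\alpha)/(n+1)\big)^{1-\alpha}$; multiplied by $n^{\alpha}\to e^{x}$, both bounds converge to $e^{-x}$. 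The product form, however, avoids this machinery entirely.)
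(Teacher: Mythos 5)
Your proof is correct, but it follows a genuinely different route from the paper's. The paper argues probabilistically: it realizes $S_{\alpha}$ as the first success epoch $\min\{n\in\N\colon B_{n}=1\}$ of independent Bernoulli trials with $\Prob\{B_{i}=1\}=\alpha/i$, so that $\{S_{\alpha}>n\}$ is the event of zero successes among the first $n$ trials, and then invokes the Poisson limit theorem: since $\sum_{i\le \lfloor e^{x/\alpha}\rfloor}\alpha/i\simeq\alpha\log\lfloor e^{x/\alpha}\rfloor\simeq x$, the number of successes converges in distribution to $\textrm{Poi}(x)$, whence the zero-class probability tends to $e^{-x}$. Your argument rests on the same tail identity $\Prob\{S_{\alpha}>n\}=\prod_{j=1}^{n}(1-\alpha/j)$ --- this identity is exactly what makes the paper's Bernoulli representation legitimate --- but you obtain it analytically, from the binomial series of the generating function together with an induction, and you then compute the limit of the product by hand: logarithmic expansion, a uniform-in-$n$ bound $\zeta(2)\alpha^{2}/(1-\alpha)$ for the higher-order terms, and $\alpha H_{n_{\alpha}}\to x$. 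What the paper's route buys is brevity and a conceptual picture (first success in rare events); what yours buys is a fully self-contained, elementary proof with explicit uniform error control, avoiding any appeal to Poisson approximation. Your closing caveat is also well placed: because $\alpha\downarrow 0$ and $n_{\alpha}\to\infty$ simultaneously, a fixed-$\alpha$ asymptotic such as $\Gamma(n+1-\alpha)/\Gamma(n+1)\sim n^{-\alpha}$ cannot be used as is, and the uniformity supplied either by your expansion or by the Poisson limit theorem is precisely the point of the argument.
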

\begin{Rem}
It is known \cite{Cressie:75} that if $Z_{\alpha}$ is a positive $\alpha$-stable variable with $\alpha\in (0,1)$, then
$\alpha \log Z_\alpha$ converges in distribution to the Gumbel double exponential law.
\end{Rem}
\begin{proof}[Proof of Lemma \ref{lem:sibuya_null}]
Consider independent Bernoulli variables $B_{1},B_2,\ldots$, such that $\Prob\{B_{i}=1\}=\alpha/i$. Then it is easy to check that $S_\alpha := \min \{n\in \N\colon B_{i} =1\}$ has the required Sibuya distribution with parameter $\alpha$. Taking any $x>0$, we now infer
$$
\Prob\{\alpha \log S_{\alpha} \le x\}\ =\ \Prob \{S_\alpha \le \lfloor e^{x/\alpha}\rfloor\}\ =\ 1-\Prob\{B_{1}+\ldots+B_{\lfloor e^{x/\alpha}\rfloor}=0\}.
$$
But for $\alpha\downarrow 0$, the probability on the right-hand side converges to $ e^{-x}$ because
$$
B_{1}+\ldots+B_{\lfloor e^{x/\alpha}\rfloor}\ \overset{d}{\underset{\alpha\downarrow 0}\longrightarrow}\ \textrm{Poi}\,(x).
$$
For the proof, just note that
$
\sum_{i=1}^{\lfloor e^{x/\alpha}\rfloor} \frac{\alpha}{i} \simeq \alpha \log \lfloor e^{x/\alpha}\rfloor \simeq x,
$
as $\alpha \downarrow 0$, and apply the Poisson limit theorem.\qed
\end{proof}

\acknowledgement{The authors would like to thank the referee for valuable comments and pointing out some inaccuracies in the first version of the paper. We also thank Prof. Alexander Iksanov for drawing our attention to the paper \cite{MaejimaSato:99}. The research of Gerold Alsmeyer was supported by the Deutsche Forschungsgemeinschaft (SFB 878), the research of Alexander Marynych by the Alexander von Humboldt Foundation.}

\bibliographystyle{abbrv}
\bibliography{StoPro}

\end{document}